\documentclass[a4paper,11pt]{article}

\hyphenation{com-ple-xi-ties de-fi-ning con-nec-tion pro-ba-bi-li-ty
uni-ver-sal es-ti-ma-tes al-ter-na-ti-ve-ly do-mi-na-ting dist-ri-bu-tion
equi-va-len-ce cor-res-pon-ding re-pea-ting}

\usepackage{amsmath,amssymb,amsfonts}
\usepackage{amsthm}
\usepackage{hyperref}
\usepackage{verbatim}
\usepackage{xcolor}

\newcommand{\Nset}{\mathbb{N}}

\newcommand{\gmlu}{\mathrm{GMLU}}
\newcommand{\tp}{\mathrm{tp}}

\newcommand{\propmove}{$p$-move}
\newcommand{\lormove}{$\lor$-move}
\newcommand{\landmove}{$\land$-move}

\newcommand{\di}{\blacklozenge}
\newcommand{\bo}{\blacksquare}

\newcommand{\ddimove}{$\di^{\geq d}$-move}
\newcommand{\bbomove}{$\bo^{< d}$-move}
\newcommand{\edimove}{$\di^{= d}$-move}
\renewcommand{\AA}{\mathcal{A}}
\newcommand{\BB}{\mathcal{B}}
\newcommand{\MM}{\mathfrak{M}}
\newcommand{\G}{\mathcal{G}}
\newcommand{\mods}{\mathrm{Md}}
\newcommand{\size}{\mathrm{size}}

\newcommand*{\game}{\mathrm{GAME}}
\newcommand*{\depth}{\mathrm{depth}}
\newcommand*{\ebomove}{$\bo^{\neq d}$-move}

\theoremstyle{plain}
\newtheorem{theorem}{Theorem}[section]

\newtheorem{lemma}[theorem]{Lemma}
\newtheorem{corollary}[theorem]{Corollary}
\newtheorem{proposition}[theorem]{Proposition}
\theoremstyle{definition}

\newtheorem{remark}[theorem]{Remark}

%
\DeclareRobustCommand{\stirling}{\genfrac\{\}{0pt}{}}

\begin{document}

\title{A monotone connection between model 
class size and description length}

\author{Reijo Jaakkola\\ {\footnotesize Tampere University}\\ {\footnotesize Finland} \and Antti Kuusisto\\ {\footnotesize Tampere University}\\ {\footnotesize University of Helsinki}\\ {\footnotesize Finland}\\ \and Miikka Vilander\\ {\footnotesize Tampere University}\\ {\footnotesize Finland}}

\date{}


\maketitle
\thispagestyle{plain}
\pagestyle{plain}

\begin{abstract}
\noindent
This paper links sizes of model classes to the minimum lengths of their defining formulas, that is, to their description complexities. Limiting to models with a fixed domain of size n, we study description complexities with respect to the extension of propositional logic with the ability to count assignments. This logic, called GMLU, can alternatively be conceived as graded modal logic over Kripke models with the universal accessibility relation. While GMLU is expressively complete for defining multisets of assignments, we also investigate its fragments GMLU(d) that can count only up to the integer threshold d. We focus in particular on description complexities of equivalence classes of GMLU(d). We show that, in restriction to a poset of type realizations, the order of the equivalence classes based on size is identical to the order based on description complexities. This also demonstrates a monotone connection between Boltzmann entropies of model classes and description complexities. Furthermore, we characterize how the relation between domain size n and counting threshold d determines whether or not there exists a dominating class, which essentially means a model class with limit probability one. To obtain our results, we prove new estimates on r-associated Stirling numbers. As another crucial tool, we show that model classes split into two distinct cases in relation to their description complexity.
\end{abstract}

\section{Introduction}

This paper investigates how sizes of model classes are linked to the minimum lengths of formulas needed to define the classes. In the scenarios we consider, we first fix a class $\mathcal{M}$ of models that share a domain of the same finite size $n$. The model classes $M\subseteq \mathcal{M}$ are then studied with respect to the extension of propositional logic with the ability to count propositional assignments. We call this logic $\mathrm{GMLU}$, as it can alternatively be defined as graded modal logic over Kripke models with the universal relation. In order to obtain more fine grained results, we parameterize $\mathrm{GMLU}$ and study also its fragments $\mathrm{GMLU}_d$ that can count only up to the threshold $d\in \mathbb{Z}_+$. This also enables us to demonstrate how the relationship between minimum formula lengths and model class sizes develops when we gradually increase the expressive power of the logic used. For a model class $M\subseteq \mathcal{M}$, the \emph{description complexity} of $M$ with respect to $\mathrm{GMLU}_d$ is simply the minimum length of a formula of $\mathrm{GMLU}_d$ needed to define $M$, if such a formula exists.

In this paper we are particularly interested in the description complexities $C_d(M)$ of the logical equivalence classes $M$ determined by $\mathrm{GMLU}_d$ over $\mathcal{M}$. Let us write $\mathfrak{M}\equiv_d\mathfrak{M}'$ if the models $\mathfrak{M},\mathfrak{M}'\in \mathcal{M}$ satisfy the same set of formulas of $\mathrm{GMLU}_d$. Note that $C_d(M)$ of an equivalence class $M$ of $\equiv_d$ can also be regarded as the description complexity of each model $\mathfrak{M}\in M$, as the expressive power of $\mathrm{GMLU}_d$ suffices precisely to describe $\mathfrak{M}$ up to the equivalence $\equiv_d$. From this perspective, description complexity is analogous to Kolmogorov complexity. There exist well known links between Kolmogorov complexity and Shannon entropy, see for example \cite{vitanyi}. The recent work in \cite{stacspaper},\cite{stacsarxiv} demonstrates a way to conceive related results also in the scenario where \emph{relational structures are classified via logics}.  In particular, it is shown that the expected Boltzmann entropy of the equivalence classes of $\mathrm{GMLU}$ is asymptotically equivalent to the expected description complexity (with respect to $\mathrm{GMLU}$) times the size of the vocabulary considered. It is also shown that for $d=1$, the greatest equivalence class of $\mathrm{GMLU}_d$ has maximum description complexity among the classes. This paper builds on those results.

Firstly, as a crucial tool for our proofs, we establish a classification of description complexities into two distinct classes. This division is based on the numbers $n_i$ of elements realizing different propositional types $i\in I$ in models of the described model class; here $I$ is just an index set for the types. The division is then determined by whether or not $n_i = d$ for at least two different types. Using this, we establish a strong connection between model class sizes and description complexities. For each model $\mathfrak{M}\in \mathcal{M}$, let $\overline{n}_{\mathfrak{M}}$ denote the tuple $(n_i)_{i\in I}$ that gives the numbers $n_i$ of points realizing propositional
types in $\mathfrak{M}$. Furthermore, instead of recording numbers $n_i$ greater than the counting threshold $d$, simply put $d$ in $\overline{n}_{\mathfrak{M}}$. We define a poset $(\mathcal{M},\preceq_\tau)$ over the models, where $\tau$ is the vocabulary and the order $\preceq_\tau$ is based on comparing the tuples $\overline{n}_{\mathfrak{M}}$ coordinatewise. 
The order $\preceq_\tau$ is directly inherited also by the classes of $\equiv_d$ such that $M\preceq_\tau M'$ if and only if for some (or equivalently, all) models $\mathfrak{M}$ and $\mathfrak{M}'$ in the respective classes, we have $\mathfrak{M}\preceq_\tau \mathfrak{M}'$. We will prove that for all classes $M$ and $M'$ of $\equiv_d$ such that $M$ and $M'$ are $\preceq_\tau$-comparable, we have

%
%
%
\[ |M| < |M'| \ \Leftrightarrow \  C_d(M) < C_d(M').\]
In other words, over $\preceq_\tau$, the ordering of model classes according to size is \emph{identical} to the ordering based on description complexity. This is an intimate link between syntax and semantics. As a corollary, we obtain a corresponding relationship between Boltzmann entropies and description complexities of model classes.

We then investigate how the classes of $\equiv_d$ behave when we alter the domain size $n$ and counting threshold $d$. Note that increasing $d$ corresponds to moving to more and more expressive logics. First we observe that for thresholds $d$ and $d'>d$
and the corresponding Shannon entropies $H_S(\equiv_d)$ and $H_S(\equiv_{d'})$ of the model class distributions given by $\equiv_d$ and $\equiv_{d'}$, we have
\begin{align*}
& H_S(\equiv_d) < H_S(\equiv_{d'})\text{ when }d'\text{ is at most }n/2,\text{ and }\\
& H_S(\equiv_d) = H(\equiv_{d'})\text{ when } d\text{ is at least }n/2. 
\end{align*}
A similar result also follows for expected Boltzmann entropies
%
%
%
$H_B(\equiv_d)\text{ and}$ $H_B(\equiv_{d'}),$
but with the orders reversed, that is $H_B(\equiv_d) > H_B(\equiv_{d'})$ for $d'$ at most $n/2$. 
%
%
%
%
%
%

To get a better sense of the relative sizes of the classes when $n$ and $d$ are altered, we prove an asymptotic characterization of the class distributions as $n\rightarrow \infty$ and $d$ is a function of $n$.
Let us say that $\equiv_{d(n)}$ has a \emph{dominating class} if with limit probability one, a random model of size $n$ belongs to a maximum size class in $\equiv_{d(n)}$.
Similarly, all classes in $\equiv_{d(n)}$ are \emph{vanishing} if with limit probability zero, a random model of size $n$ belongs to a maximum size class.
Then the following results hold as $n \to \infty$. 
\begin{itemize}
\item
If $d(n) \leq n/2^{|\tau|} - f(n)$
where $f(n) = \omega(\sqrt{n})$,
then $\equiv_{{d(n)}}$ has a dominating class. 
\item
If $d(n) \geq n/2^{|\tau|} - f(n)$ where $f(n) = o(\sqrt{n})$, then $\equiv_{{d(n)}}$ has no dominating class.
\item
If $d(n) \geq n/2^{|\tau|} + f(n)$ where $f(n) = \omega(\sqrt{n})$, then every class in $\equiv_{d(n)}$ is vanishing.
\end{itemize}
%
%
%
One corollary of these results is that for $d(n) \leq n/t - f(n)$, if $f(n) = \omega(\sqrt{n})$, then with limit probability one, two random models of size $n$ cannot be separated in $\mathrm{GMLU}_{d(n)}$. Finally, we also give a non-asymptotic variant of the characterization of the class distributions for $\equiv_d$ including explicit bounds on $d$ for separating the cases where $\equiv_d$ has a \emph{majority class} or not. By a majority class, we mean a class containing more than half of all models in $\mathcal{M}$.

Concerning related work, as already mentioned, it is well known that entropy and Kolmogorov complexity are related. Indeed, for computable distributions, Shannon entropy links to Kolmogorov complexity to within a constant. This result is discussed, e.g., in \cite{vitanyi}, \cite{grunwald}, \cite{leung}. However, it is shown in \cite{teixeira} that the general link fails for R\'{e}nyi and Tsallis entropies.  See for example \cite{grunwald}, \cite{leung}, \cite{teixeira} for R\'{e}nyi and Tsallis entropies. The first connection between logical \emph{formula length} and entropy has---to our knowledge---been obtained in \cite{stacspaper}, \cite{stacsarxiv}, where expected Boltzmann entropy is shown to be asymptotically equivalent to description complexity.

Concerning further related work, we will next discuss the proof techniques used in the current paper. For proving bounds on formula sizes, we use \emph{formula size games} for the logics $\gmlu_d$. Indeed, variants of standard Ehrenfeucht-Fra\"{i}ss\'{e} games and (graded) bisimulation games would not suffice, as we need to deal with formula length, and thereby with all logical operators, including connectives. The formula size games for the logics $\gmlu_d$ will be developed below based on a similar game used in \cite{stacspaper}, \cite{stacsarxiv}. That game builds on the game for standard modal logic ML used and developed in \cite{HellaV19} for proving a nonelementary succinctness gap between first-order logic and ML. The first formula size game, developed by Razborov in \cite{Razborov90}, dealt with propositional logic. A later variant of the game was defined by Adler and Immerman for $\mathrm{CTL}$ in \cite{AdlerI03}. Designing the games for $\gmlu_d$ is relatively straightforward and based directly on similar earlier systems, but using them requires some nontrivial combinatorial arguments.


In addition to games, we also make use of a range of techniques for estimating model class sizes and description complexity. These include
Stirling's approximations and Chernoff bounds. In particular, to obtain our results, we prove new estimates on $r$-associated Stirling numbers, which may be of independent interest.

As a brief summary of our paper, the main objective is to elucidate the general picture of how description length relates to model class size. This also builds links between logic and notions of entropy.
The logic $\gmlu$ is suitable for the current study, and it even allows simple access to a chain of increasingly expressive logics $\gmlu_d$ via increasing $d$. The concluding section discusses possibilities for generalizing to further logics.
While the current paper focuses on theory, the notion of description complexity is also relevant in a range of applications. 
For example, in some currently active research on explainability in AI, minimal length specifications can be used as explanations of longer formulas. For work on this topic see, e.g., \cite{explainability2}, \cite{explainability1}.

The plan of the paper is as follows. After the preliminaries in Section \ref{preliminaries}, we prove crucial lower bounds for description complexity in Section \ref{descriptioncomplexitysection} using games. In Section \ref{monotoneconnection} we prove a monotone connection between model class size and description complexity, and in Section \ref{phase} we investigate phase transitions of class size distributions by varying $n$ and $d$. Section \ref{conclusion} concludes the paper.

\section{Preliminaries}\label{preliminaries}

We first define the logics studied in this work.
Let $\tau$ be a finite set of proposition symbols. We consider $\tau$ to be fixed throughout the entire paper.  The syntax of 
\textbf{graded universal modal logic} $\gmlu[\tau]$ is generated as follows (the syntactic choices will be explained later on):
\begin{align*}
\varphi := &\di^{\geq k} \psi \mid \bo^{< k} \psi \mid \di^{= k} \psi \mid \bo^{\neq k} \psi \mid \\
&\varphi \land \varphi \mid \varphi \lor \varphi \mid \di^{\geq k} \varphi \mid \bo^{< k} \varphi \mid \di^{= k} \varphi \mid \bo^{\neq k} \varphi \\
\psi := &p \mid \neg p \mid \psi \land \psi \mid \psi \lor \psi
\end{align*}
\noindent
Here $p \in \tau$ and $k \in \Nset$. Note that the formulas of $\gmlu[\tau]$ have proposition symbols only in the scope of modal operators. Furthermore, all formulas are given in negation normal form. In the current paper, $\neg\varphi$ will always mean a formula where $\neg$ has been pushed all the way to the level literals.

Let $\MM$ be a Kripke model with domain $W$. 
In this paper, modal logics will always have a unary vocabulary, so therefore Kripke models will not be associated with a binary accessibility relation.
We define the semantics of the global graded modalities as follows:
$(\MM, w) \vDash \di^{\geq k}\varphi \Leftrightarrow$ there exist at least $d$ elements $v \in W$ such that $(\MM, v) \vDash \varphi$ and $(\MM, w) \vDash \di^{= k} \varphi \Leftrightarrow $ there exist exactly $d$ elements $v \in W$ such that $(\MM, v) \vDash \varphi$.
Additionally, $(\MM, w) \vDash \bo^{< k} \varphi \Leftrightarrow (\MM, w) \vDash \neg \di^{\geq k} \neg \varphi$ and $(\MM, w) \vDash \bo^{\neq k} \varphi \Leftrightarrow (\MM, w) \vDash \neg \di^{= k} \neg \varphi$. 
The semantics of the Boolean connectives $\neg,\wedge,\vee$ is defined in the usual way. Notice
that $\di^{\geq k}$ and $\bo^{< k}$ as well as $\di^{= k}$ and $\bo^{\neq k}$ are
dual to each other. Thus the modalities of $\gmlu$ are the diamonds $\di^{\geq k}$,
$\di^{= k}$ and their duals $\bo^{< k}$, $\bo^{\neq k}$.
Intuitively $\bo^{< k}$ (respectively, $\bo^{\neq k}$) means that all points satisfy $\varphi$, except for
some number $m < k$ (resp. $m\not=k$) of exceptions. 


Let $\MM$ be a Kripke model over $\tau$ and $\varphi \in \gmlu[\tau]$. The point-free truth relation is defined such that
$\MM \vDash \varphi$ if and only if $\MM,w \vDash \varphi
\text{ for all }w\in W$. As all proposition symbols occur in the scope of a global modality, $\MM \vDash \varphi$ if and only if there exists some $w\in W$ such that $\MM,w \vDash \varphi$.
Clearly truth of $\gmlu$-formulas does not depend on the evaluation point $w$. This independence property is the reason behind the definition of the syntax of $\gmlu$ such that proposition symbols are guaranteed to be in the scope of modal operators. For a set $M$ of pointed models, we denote $M \vDash \varphi \Leftrightarrow (\MM, w) \vDash \varphi$ for every $(\MM, w) \in M$. 

A \textbf{propositional type} $\pi$ over $\tau$ is a maximally consistent set of literals (that is, proposition symbols and negated proposition symbols). Therefore $\pi$ has exactly one of $p, \neg p$ for each symbol $p\in\tau$. We henceforth refer to propositional types as just types.
The number of types over $\tau$ is
denoted by $t = 2^{|\tau|}$.

The \textbf{counting depth} of a formula $\varphi \in \gmlu[\tau]$, denoted $\depth(\varphi)$, is defined as follows:
\begin{itemize}
    \item $\depth(\alpha) = 0$ for any literal $\alpha$,
    \item $\depth(\varphi \wedge \psi) = \depth(\varphi \vee \psi) \\ = \max(\depth(\varphi), \depth(\psi))$,
    \item $\depth(\di^{\geq k} \varphi) = \depth(\bo^{< k} \varphi) = k$,
    \item $\depth(\di^{= k} \varphi) = \depth(\bo^{\neq k} \varphi) = k+1$.
\end{itemize}
We denote by $\gmlu_d[\tau]$ \textbf{the counting depth $d$ fragment of $\gmlu[\tau]$}, where the counting depth of formulas is restricted to at most $d$. The results of this paper are formulated for the logics $\gmlu_d[\tau]$. 
Note that $\depth(\di^{= k} \varphi) = k+1$ while $\depth(\di^{\geq k} \varphi) = k$. We give some intuition to explain this choice. First of all, $\di^{= d-1} \varphi \equiv \di^{\geq d-1} \varphi \land \neg \di^{\geq d} \varphi$, so we see that when the counting depth of formulas is restricted to $d$, the allowed ``exact counting'' diamonds $\di^{=k}$ always have $k\leq d - 1$ and thus they add no expressive power over ``threshold counting'' diamonds $\di^{\geq k}$ with $k\leq d$.
Moreover, $k+1$ also corresponds to the number of quantifiers required to express ``exact counting'' of $k$ elements in monadic first-order logic.

The \textbf{size} of a formula $\varphi \in \gmlu[\tau]$, denoted $\size(\varphi)$, is defined as follows:
\begin{itemize}
    \item $\size(\alpha) = 1$ for any literal $\alpha$,
    \item $\size(\varphi \wedge \psi) = \size(\varphi \vee \psi) = \size(\varphi) + \size(\psi) + 1$,
    \item $\size(\di^{\geq k} \varphi) = \size(\bo^{< k} \varphi) = \size(\varphi) + k$,
    \item $\size(\di^{= k} \varphi) = \size(\bo^{\neq k} \varphi) = \size(\phi) + k + 1$.
\end{itemize}
Note that \emph{all literals} have the same size. This is because we wish to consider negative (that is, negated)
information and positive (that is, non-negated)
information as \emph{equal} in relation to formula size.
This idea explains why we defined $\gmlu$ so that formulas are in negation normal form.


Let $\mathcal{M}$ be the set of all $\tau$-models with the fixed domain $W = \{1, \dots, n\}$. When $\mathcal{M}$ is clear from the context, a formula $\varphi \in \gmlu_d$ is said to \textbf{define} a set $M \subseteq \mathcal{M}$ if for every $\mathfrak{M} \in \mathcal{M}$, we have $\mathfrak{M} \vDash \varphi$ if and only if $\mathfrak{M} \in M$. The set $M$ is then called $\gmlu_d$-\textbf{definable}. The $\gmlu_d$-\textbf{description complexity} $C(M)$ of a $\gmlu_d$-definable set $M$ is the minimum size of a formula $\varphi \in \gmlu_d$ which defines $M$.


We may write $\mathfrak{M}\equiv_d\mathfrak{N}$ if the models $\mathfrak{M}$ and $\mathfrak{N}$ satisfy exactly the same $\gmlu_d$-formulas.
The relation $\equiv_d$ is clearly an equivalence relation and defines a natural related partition. 
For an example of description complexity, consider $\gmlu_1[\tau]$ for the case with the singleton alphabet $\tau =\{p\}$. The model where $p$ is true in every point constitutes a singleton class in the partition of models defined by $\equiv_1$. The description complexity of this class is 2 as a minimum size formula that defines the class is $\bo^{<1} p$.

 Let $I := \{1,\dots,t\}$ and fix an enumeration $(\pi_i)_{i \in I}$ of all the types over the propositional vocabulary $\tau$. Now let $M$ be an equivalence class of $\equiv_d$ over the set $\mathcal{M}$ of models of size $n$ with domain $W = \{1,\dots, n\}$. For a type $\pi_i$, all models in the class either have exactly $n_i$ points of type $\pi_i$ for some $n_i < d$, or all the models in $M$ have at least $d$ points of type $\pi_i$. In the latter case we define $n_i := d$. We thus get a characterization of the classes of $\equiv_d$ in terms of $t$-tuples $\overline{n}$. 
 A $t$-tuple $\overline{n} = (n_i)_{i \in I}$ is called $(n,d)$-\textbf{admissible}, if $n_i \leq d$ for every $i \in I$, $\sum_{i \in I} n_i \leq n$ and either there is at least one $i \in I$ such that $n_i = d$ or $\sum_{i \in I} n_i = n$. 
 Note that $(n,d)$-admissible tuples $\overline{n}$ and equivalence classes of $\equiv_d$ are in one-to-one correspondence. Thus we can write $M_{\overline{n}}$ for the class corresponding to $\overline{n}$.

Given an $(n, d)$-admissible tuple $\overline{n}$ and a type $\pi_i$ that has precisely the same number $k$ of realizing points in every model $\MM \in M_{\overline{n}}$, we denote this number $k$ by $|\pi_i|_{\overline{n}}$. (Note that $k$ can be greater than $d$.)
We will often omit the tuple ${\overline{n}}$ in the subscript of $|\pi_i|_{\overline{n}}$ when it is clear from the context. 

Following \cite{stacsarxiv}, we define the \textbf{Boltzmann entropy of a class} $M$ as $H_B(M) := \log(|M|)$. As discussed in \cite{stacsarxiv}, this terminology comes from statistical mechanics, where Boltzmann entropy measures the randomness of a \emph{macrostate} via the number of \emph{microstates} that correspond to it. The idea is that a larger macrostate is ``more random'' (or ``less specific'') since it is more likely to be hit by a random selection. In statistical mechanics, the formula for Boltzmann entropy is $k_B\ln\Omega$, where $\Omega$ is the number of microstates and $k_B$ the Boltzmann constant. In our definition, we use the binary logarithm (and do not use $k_B$). As a general intuition, it is natural to associate a formula $\varphi$ (or the class it defines) with a macrostate, while the models of $\varphi$ are then the corresponding microstates.

Consider now the following natural probability distribution over the equivalence classes of $\equiv_d$: $p_{\equiv_d}(M) = |M|/|\mathcal{M}|$ for each class $M$. We again refer to this distribution with the symbol $\equiv_d$ (with slight abuse of notation). We define the \textbf{Boltzmann entropy of the distribution $\equiv_d$} as the expected value of $H_B$ over the distribution $\equiv_d$ and we denote it by $H_B(\equiv_d)$. In other words, we define $H_B(\equiv_d)$ as $\sum_{M \in \mathcal{M} / \equiv_d} p_{\equiv_d}(M) H_B(M)$. Roughly speaking, $H_B(\equiv_d)$ is large when $\equiv_d$ is far from the uniform distribution.

The Boltzmann entropy of the distribution $\equiv_d$ is closely related to the \textbf{Shannon entropy} $H_S(\equiv_d)$ of $\equiv_d$, which we define as the expected value of $-\log(p_{\equiv_d}(M))$ over the distribution $\equiv_d$. More explicitly, we define $H_S(\equiv_d)$ as $-\sum_{M \in \mathcal{M} / \equiv_d} p_{\equiv_d}(M) \log(p_{\equiv_d}(M))$. In contrast to Boltzmann entropy, Shannon entropy measures randomness of $\equiv_d$ by looking at how uniform the distribution is. Indeed, if $\equiv_d$ contains a very large class, then its Shannon entropy is small, while its Boltzmann entropy is relatively large. The following result, which was shown in \cite{stacsarxiv} in a more general setting (with slightly different notation), formally establishes that the two notions of entropy are complementary in nature.


\begin{proposition}\label{prop:shannon_boltzmann}
   $H_S(\equiv_d) + H_B(\equiv_d) = |\tau|n$
\end{proposition}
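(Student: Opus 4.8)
The plan is to unfold both definitions and observe that the two sums collapse onto a single clean quantity: the total number of models $|\mathcal{M}| = t^n = 2^{|\tau|n}$ (where $t = 2^{|\tau|}$, so $\log|\mathcal{M}| = |\tau|n$). Writing $p_{\equiv_d}(M) = |M|/|\mathcal{M}|$, I would expand
\[
H_S(\equiv_d) = -\sum_{M} \frac{|M|}{|\mathcal{M}|}\log\!\Bigl(\frac{|M|}{|\mathcal{M}|}\Bigr)
= -\sum_{M} \frac{|M|}{|\mathcal{M}|}\bigl(\log|M| - \log|\mathcal{M}|\bigr),
\]
and then split the right-hand side into two pieces. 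The first piece is exactly $-\sum_M \frac{|M|}{|\mathcal{M}|}\log|M| = -\sum_M p_{\equiv_d}(M) H_B(M) = -H_B(\equiv_d)$, by the definitions of $H_B(M)$ and of $H_B(\equiv_d)$ as its expectation.

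For the second piece, I would use that $\log|\mathcal{M}|$ is a constant independent of $M$, so
\[
\sum_{M} \frac{|M|}{|\mathcal{M}|}\log|\mathcal{M}| = \log|\mathcal{M}| \sum_{M} \frac{|M|}{|\mathcal{M}|} = \log|\mathcal{M}|,
\]
since the classes of $\equiv_d$ partition $\mathcal{M}$ and hence $\sum_M |M| = |\mathcal{M}|$, i.e. $\sum_M p_{\equiv_d}(M) = 1$. Combining, $H_S(\equiv_d) = -H_B(\equiv_d) + \log|\mathcal{M}|$, which rearranges to $H_S(\equiv_d) + H_B(\equiv_d) = \log|\mathcal{M}|$.

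It then remains to compute $\log|\mathcal{M}|$. Since a $\tau$-model with domain $W = \{1,\dots,n\}$ is just an assignment of a propositional type to each of the $n$ points, and there are $t = 2^{|\tau|}$ types, we have $|\mathcal{M}| = t^n = 2^{|\tau|n}$, whence $\log|\mathcal{M}| = |\tau|n$ using the binary logarithm. This gives the claimed identity. There is essentially no obstacle here: the only things to be careful about are that the logarithm base matches the one used in the definition of $H_B$ (binary, as stated in the preliminaries) and that the partition property $\sum_M |M| = |\mathcal{M}|$ is invoked explicitly; the rest is a one-line rearrangement.
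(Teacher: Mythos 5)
Your proof is correct and complete: the decomposition $\log p_{\equiv_d}(M) = \log|M| - \log|\mathcal{M}|$ together with $|\mathcal{M}| = (2^{|\tau|})^n$ is exactly the standard argument, and the paper itself gives no proof here (it cites the result from prior work), so there is nothing to contrast it with. The two points you flag as requiring care --- the binary logarithm and the partition identity $\sum_M |M| = |\mathcal{M}|$ --- are indeed the only places where anything could go wrong, and you handle both.
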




\section{Description complexity}\label{descriptioncomplexitysection} 

In this section we investigate the $\gmlu_d$-description complexity of equivalence classes in the partition $\equiv_{d}$. We will utilize a formula size game for $\gmlu_d$.

Let $I = \{1, \dots, t\}$ and let $(\pi_i)_{i \in I}$ be an enumeration of the types of the set $\tau$ of proposition symbols. Let $d \leq n$ be the counting depth. We consider the partition induced by $\gmlu_d$ for models of size $n$. For each admissible tuple $\overline{n} = (n_i)_{i \in I}$ there is an equivalence class $M_{\overline{n}}$, where each type $\pi_i$ realized $n_i$ times, with $n_i = d$ meaning the type $\pi_i$ is realized \emph{at least} $d$ times. We denote the number of occurrences of $d$ in $\overline{n}$ by $k_d$. 

\medskip

Such a class $M_{\overline{n}}$ can be defined via the following $\gmlu_d$ formula:
\[
\varphi(\overline{n}) := \bigwedge\limits_{n_i < d} \di^{= n_i} \psi(\pi_i) \land \bigwedge\limits_{n_i = d} \di^{\geq d} \psi(\pi_i)
\]
The size of the formula $\varphi(\overline{n})$ is $\sum_{i \in I} n_i + t(2|\tau|+1)-k_d-1$.

If $n_i = d$ for at most one $i \in I$, then $M_{\overline{n}}$ can be defined by a smaller formula. Let $\pi_j$ be a type with maximal $n_j$ in $\overline{n}$. Now $M_{\overline{n}}$ is also defined by the formula
\[
\varphi'(\overline{n}) := \bigwedge\limits_{i \neq j} \di^{= n_i} \psi(\pi_i).
\]
To see this, recall that we restrict to models of size $n$. As all other types have been exactly specified, the only option for the remaining points is the type $\pi_j$. The size of $\varphi'(\overline{n})$ is $n - |\pi_j| +(t-1)(2|\tau|+1)-2$, where $|\pi_j|$ denotes the number of points in models of $M_{\overline{n}}$ with type $\pi_j$.

We define the constant $c_{\tau} := t(2|\tau|+1)-1$. By the formulas above we see that $C(M_{\overline{n}}) \leq \sum_{i \in I} n_i + c_{\tau}$ if $k_d \geq 2$ and $C(M_{\overline{n}}) \leq n - |\pi_j| + c_{\tau}$ if $k_d \leq 1$. We will use the formula size game for $\gmlu_d$ to show that these bounds are optimal up to the constant $c_{\tau}$.

Let $r_0 \in \Nset$ and let $\AA_0, \BB_0$ be sets of $\tau$-models. The $\gmlu_d$-formula size game $\game_d(r_0, \AA_0, \BB_0)$ has two players, S and D. Positions of the game are of the form $P = (r, \AA, \BB)$ and the starting position is $P_0 = (r_0, \AA_0, \BB_0)$. In a position $P$, if $r = 0$, then D wins. Otherwise S chooses between the following moves:

\smallskip

\noindent \textbf{\propmove}: S chooses a $\tau$-literal $\alpha$. The game ends. If $\AA \vDash \alpha$ and $\BB \vDash \neg \alpha$, then S wins. Otherwise D wins. S cannot make this move if he has not made a modal move so far.

\smallskip

\noindent \textbf{\lormove}: S chooses $\AA_1, \AA_2 \subseteq \AA$ such that $\AA_1 \cup \AA_2 = \AA$ and $r_1, r_2 \geq 1$ such that $r_1 + r_2 + 1 = r$. D chooses whether the next position is $(r_1, \AA_1, \BB)$ or $(r_2, \AA_2, \BB)$.

\smallskip

\noindent \textbf{\landmove}: The same as a \lormove\ with the roles of $\AA$ and $\BB$ switched.

\smallskip

\noindent \textbf{\ddimove}: S chooses a number $k \in \Nset$ with $k \leq d$ and $k < r$. For every $(\MM, w) \in \AA$, S chooses $k$ different points $v \in W$. Let $\AA'$ be the set of models $(\MM, v)$ chosen this way. For every $(\MM, w) \in \BB$, S chooses $n-k+1$ different points $v \in W$. Let $\BB'$ again be the set of models chosen. The next position of the game is $(r-k, \AA', \BB')$.

\smallskip

\noindent \textbf{\bbomove}: The same as a \ddimove\ with the roles of $\AA$ and $\BB$ switched.

\smallskip

\noindent \textbf{\edimove}: S chooses a number $k \in \Nset$ with $k < d$ and $k < r$. For every $(\MM, w) \in \AA$, S chooses a set $P_{\MM, w}$ of $k$ different points. Let $N_{\MM, w} := W \setminus P_{\MM, w}$. 
For every $(\MM, w) \in \BB$, S chooses either a set $P_{\MM, w}$ of $k+1$ different points or a set $N_{\MM, w}$ of $|\MM|-k+1$ different points. Finally we let $\AA' := \{(\MM, v) \mid (\MM, w) \in \AA \cup \BB, v \in P_{\MM, w}\}$ and $\BB' := \{(\MM, v) \mid (\MM, w) \in \AA \cup \BB, v \in N_{\MM, w}\}$. The next position of the game is $(r-k-1, \AA', \BB')$.

\smallskip

\noindent \textbf{\ebomove}: The same as a \edimove\ with the roles of $\AA$ and $\BB$ switched.

\medskip

The formula size game characterizes the size of formulas that separate model classes. This is formalized in the following theorem:
\begin{theorem}\label{thm:game}
The following statements are equivalent:
   \begin{enumerate}
       \item S has a winning strategy in the game $\game_d(r, \AA, \BB)$.
       \item There is $\varphi \in \gmlu_d[\tau]$ with size at most $r$ such that $\AA \vDash \varphi$ and $\BB \vDash \neg\varphi$.
   \end{enumerate}
\end{theorem}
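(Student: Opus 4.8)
The plan is to prove the two implications separately, in each case by induction: $(2)\Rightarrow(1)$ by induction on the structure of a separating formula, and $(1)\Rightarrow(2)$ by induction along a winning strategy for $S$.

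For $(2)\Rightarrow(1)$, suppose $\varphi\in\gmlu_d[\tau]$ has size at most $r$ with $\AA\vDash\varphi$ and $\BB\vDash\neg\varphi$. I would build a winning strategy for $S$ in $\game_d(r,\AA,\BB)$ by following the syntax tree of $\varphi$, matching each construct to the game move whose resource cost equals its contribution to $\size$. A top connective $\varphi_1\lor\varphi_2$ becomes a \lormove\ with $r_i:=\size(\varphi_i)$ (so $r_1+r_2+1=\size(\varphi)\le r$), splitting $\AA$ into the models satisfying $\varphi_1$ and those satisfying $\varphi_2$; dually, $\varphi_1\land\varphi_2$ becomes a \landmove. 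A diamond $\di^{\geq k}\chi$ becomes a \ddimove\ with parameter $k$, which is legal because $k=\depth(\di^{\geq k}\chi)\le d$ and $k<\size(\di^{\geq k}\chi)\le r$; here $S$ selects in each $\AA$-model $k$ witnesses of $\chi$ and in each $\BB$-model $n-k+1$ witnesses of $\neg\chi$ (these exist since $\BB\vDash\neg\di^{\geq k}\chi$ forces at most $k-1$ points to satisfy $\chi$), leaving the position $(r-k,\AA',\BB')$ with $\AA'\vDash\chi$, $\BB'\vDash\neg\chi$ and $\size(\chi)\le r-k$. An exact diamond $\di^{=k}\chi$ becomes an \edimove\ (legal since $k+1=\depth(\di^{=k}\chi)\le d$ and $k<\size(\di^{=k}\chi)\le r$): in each $\AA$-model $S$ lets $P$ be the $k$ points satisfying $\chi$, so $N$ is the set of $n-k$ points satisfying $\neg\chi$, while in each $\BB$-model $S$ exploits the two ways of failing ``exactly $k$'' — if $\ge k+1$ points satisfy $\chi$ take such a $P$ of size $k+1$, otherwise $\ge n-k+1$ points satisfy $\neg\chi$ and $S$ takes such an $N$ — again yielding $\AA'\vDash\chi$, $\BB'\vDash\neg\chi$ and $\size(\chi)\le r-k-1$. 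The dual modalities $\bo^{<k}$ and $\bo^{\neq k}$ are handled by the $\AA/\BB$-swapped moves \bbomove\ and \ebomove\ using $\bo^{<k}\varphi\equiv\neg\di^{\geq k}\neg\varphi$ and $\bo^{\neq k}\varphi\equiv\neg\di^{=k}\neg\varphi$. In each case $D$'s choices leave $S$ in a position to which the induction hypothesis applies; a literal $\varphi=\alpha$ is handled by the \propmove, and this is reached only after at least one modal move since literals of $\gmlu_d[\tau]$ occur only under modalities, so the side condition on \propmove\ is met. Any slack $r-\size(\varphi)>0$ is harmless: modal moves and the \propmove\ only require enough resource, not an exact amount, and for \lormove/\landmove\ the excess can be absorbed into one of the two summands.

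For $(1)\Rightarrow(2)$, fix a winning strategy for $S$ in $\game_d(r,\AA,\BB)$; since only finitely many positions are reachable, the induction along this strategy is well-founded. I would read off a formula bottom-up: a \propmove\ on $\alpha$ yields $\alpha$; a \lormove\ (resp.\ \landmove) yields $\varphi_1\lor\varphi_2$ (resp.\ $\varphi_1\land\varphi_2$) from the formulas obtained for $D$'s two responses; and a modal move yields the corresponding operator applied to the formula obtained for the next position. The sizes add up to at most $r$ by construction; $\AA\vDash\varphi$ and $\BB\vDash\neg\varphi$ follow by unwinding the move definitions together with the above duality laws; and the counting depth of $\varphi$ stays at most $d$ because the bounds $k\le d$ (in a \ddimove\ or \bbomove) and $k<d$ (in an \edimove\ or \ebomove) are exactly the depth bounds of the operators they introduce.

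The main obstacle is making the read-off in $(1)\Rightarrow(2)$ compatible with the two-sorted syntax of $\gmlu_d[\tau]$ (the sort $\psi$ of literal combinations versus the sort $\varphi$ of genuine $\gmlu_d$-formulas) and with the rule that a \propmove\ is illegal before any modal move. After $S$'s first modal move, nothing prevents a later \lormove\ whose two branches lead respectively to a \propmove\ and to a further modal move, and the naive read-off then produces a ``mixed'' subformula — a Boolean combination of a literal and a genuinely modal formula — that is not literally generated by the grammar. I would handle this by carrying a flag in the induction recording whether a modal move has been made, proving the statement for the slightly larger syntax in which literals may occur freely beneath modalities, and then converting any such formula into a genuine $\gmlu_d[\tau]$-formula of the same size, and without raising the counting depth, by pulling the point-independent subformulas out from under the modalities. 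This normalization is the one place where the argument is not purely mechanical; it also clarifies why a \ddimove\ may use $k=d$ while an \edimove\ may only use $k<d$, namely because $\di^{\geq d}\chi$ lies in $\gmlu_d$ whereas $\di^{=d}\chi$ does not.
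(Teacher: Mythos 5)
Your proposal is correct and is exactly the standard two-directional induction that the paper leaves implicit (it only says ``easy proof by induction'' and points to \cite{HellaV19}); your move-by-move bookkeeping of sizes, the legality conditions $k\leq d$, $k<d$, $k<r$, and the witness choices in the modal moves all check out, and your handling of slack in the \lormove/\landmove\ is the right way to deal with the exact constraint $r_1+r_2+1=r$. You are also right to flag the mismatch between the game's \propmove\ rule and the two-sorted grammar in the $(1)\Rightarrow(2)$ direction --- a subtlety the paper glosses over (and one that is harmless for the paper's applications, which only use $(2)\Rightarrow(1)$ for lower bounds) --- and your normalization does preserve size and depth in this fixed-domain setting, since for point-independent $\chi$ and $k\leq d\leq n$ one has, e.g., $\di^{\geq k}(\psi\lor\chi)\equiv\chi\lor\di^{\geq k}\psi$ with $\size(\chi\lor\di^{\geq k}\psi)=\size(\di^{\geq k}(\psi\lor\chi))$.
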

\begin{proof}
    Easy proof by induction. See \cite{HellaV19} for a version of the proof for basic modal logic.
\end{proof}

It will be useful for the proofs below to note that if (essentially) the same model is on both sides of the game, then D has an easy winning strategy.
\begin{lemma}\label{lem:samemodel}
Let $P = (r, \AA, \BB)$ be a position of a game $\game_d(r_0, \AA_0, \BB_0)$. Let there be propositionally equivalent versions $(\MM, w) \in \AA$ and $(\MM, v) \in \BB$ of the same model $\MM$. Now D has a winning strategy from position $P$.
\end{lemma}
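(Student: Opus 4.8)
The plan is to show directly that D can maintain a copy of $\MM$ on both sides through every move S makes, and thereby survive to a position with $r = 0$ (and also prevent S from ever winning with a \propmove). The key invariant D will preserve is: \emph{there exist $(\MM, w') \in \AA$ and $(\MM, v') \in \BB$ that are propositionally equivalent pointed versions of the same underlying model $\MM$}. This invariant clearly holds at $P$ by hypothesis. I would argue that each of S's moves either preserves it or immediately costs S the game.

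First I would dispose of the \propmove: if S plays a literal $\alpha$, then since $(\MM, w')$ and $(\MM, v')$ are propositionally equivalent, either both satisfy $\alpha$ or both satisfy $\neg\alpha$; in either case the condition ``$\AA \vDash \alpha$ and $\BB \vDash \neg\alpha$'' fails (one side contains a model on the wrong side of $\alpha$), so D wins. Next, for the Boolean moves: in a \lormove, S splits $\AA$ into $\AA_1 \cup \AA_2 = \AA$, so $(\MM, w')$ lies in at least one $\AA_i$; D simply chooses that $(r_i, \AA_i, \BB)$, and the invariant is preserved with the same $(\MM, v')$ on the right. The \landmove\ is symmetric. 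In all Boolean cases $r$ strictly decreases (since $r_1, r_2 \geq 1$ and $r_1 + r_2 + 1 = r$).

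The main work is the modal moves. For a \ddimove\ with parameter $k \le d$, $k < r$: on the $\AA$-side S picks $k$ points of $(\MM, w')$, forming some sub-collection, and on the $\BB$-side S picks $n - k + 1$ points of $(\MM, v')$. Since $(\MM,v')$ contributes $n-k+1$ of the $n$ total points of $\MM$ to $\BB'$, and $(\MM,w')$ contributes only $k$ points to $\AA'$, by pigeonhole the two sets of chosen points of $\MM$ must overlap: $k + (n-k+1) = n+1 > n$. So there is a point $u \in W$ chosen on both sides, giving $(\MM, u) \in \AA'$ and $(\MM, u) \in \BB'$ — literally the same pointed model, hence certainly propositionally equivalent versions of $\MM$. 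The invariant is restored at position $(r-k, \AA', \BB')$, and $r$ has strictly decreased since $k \geq 1$ (note $k=0$ is impossible as $k < r$ forces... actually $k \ge 1$ needs a word: if $k = 0$ were allowed the \ddimove\ would be degenerate, but $\di^{\ge 0}$ is trivially true so this case is harmless / can be excluded — I would remark that $k \geq 1$ may be assumed, or handle $k=0$ trivially). The \bbomove\ is symmetric. For an \edimove\ with parameter $k < d$, $k < r$: S picks $k$ points $P_{\MM,w'}$ of $(\MM,w')$ (these go to $\AA'$) and its complement $N_{\MM,w'}$ of size $n-k$ (these go to $\BB'$); on the $\BB$-side for $(\MM,v')$ S picks \emph{either} a $(k+1)$-set $P_{\MM,v'}$ (going to $\AA'$) or an $(n-k+1)$-set $N_{\MM,v'}$ (going to $\BB'$). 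In the first subcase, $P_{\MM,v'}$ has $k+1 > k = |W \setminus N_{\MM,w'}|$ points, so it meets $N_{\MM,w'}$; pick $u$ in the intersection, then $(\MM,u) \in \AA'$ (from $P_{\MM,v'}$) and $(\MM,u) \in \BB'$ (from $N_{\MM,w'}$). In the second subcase, $N_{\MM,v'}$ has $n-k+1 > n-k = |N_{\MM,w'}|$ points so it meets $P_{\MM,w'}$... wait, more carefully: $|P_{\MM,w'}| = k$ and $|N_{\MM,v'}| = n-k+1$, and $k + (n-k+1) = n+1 > n$, so they intersect; pick $u$ there, giving $(\MM,u) \in \AA'$ (from $P_{\MM,w'}$) and $(\MM,u) \in \BB'$ (from $N_{\MM,v'}$). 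Either way the invariant holds at $(r-k-1, \AA', \BB')$, with $r$ strictly decreased. The \ebomove\ is symmetric.

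Finally, since every move strictly decreases $r$ and D can respond to every move while keeping the invariant (and S can never win a \propmove), after finitely many moves the position reaches $r = 0$ with the invariant still intact, at which point D wins by definition. The main obstacle — really the only non-formulaic point — is checking the pigeonhole counts in the modal moves, in particular being careful that in the \ddimove\ the $\BB$-side picks $n-k+1$ points (not $n-k$) and in the \edimove\ the $\BB$-side options have sizes $k+1$ and $|\MM|-k+1$; these ``$+1$'' slacks are exactly what forces the overlap with the $\AA$-side's $k$ (resp.\ $k$ and $n-k$) points. I would also note that the same-point conclusion $(\MM,u)$ on both sides is even stronger than needed (the lemma only asks for propositionally equivalent versions), which keeps the induction clean.
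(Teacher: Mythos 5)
Your proof is correct and follows the same strategy as the paper's: maintain the invariant that propositionally equivalent versions of the same model sit on both sides, choosing the preserving branch at Boolean moves and using a pigeonhole overlap of the chosen point sets at modal moves. The paper merely asserts this as "easy to see"; your explicit counting of the $+1$ slacks in the \ddimove\ and \edimove\ (and the remark on the degenerate $k=0$ case) supplies exactly the details the paper omits.
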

\begin{proof}
It is easy to see that the pair of propositionally equivalent versions of the same model is maintained through any modal move of S. For \lormove s and \landmove s one of the two possible following positions will always have such a pair of models and the strategy of D is to choose this position.
\end{proof}

\medskip

 Let $\overline{n}$ be an admissible tuple and assume that $n_1$ is one of the largest coordinates. We first define the sets $\AA_{\overline{n}}$ and $\BB_{\overline{n}}$ of models for the game. All models have the same universe $W = \{1, \dots, n\}$. We denote  $\mathrm{supp}(\overline{n}) := \{i \in I \mid n_i > 0\}$. We first define a model $\MM_0$ as follows. For each $i \in I$, $i \neq 1$, the type $\pi_i$ is realized precisely $n_i$ times. The type $\pi_1$ is realized $n- \sum_{i \neq 1} n_i$ times. Additionally, the point 1 is of type $\pi_1$. Intuitively the model $\MM_0$ is a model of the class $M_{\overline{n}}$, where all points not fixed by the tuple $\overline{n}$ are of type $\pi_1$.
We set $\AA_{\overline{n}} := \{(\MM_0, 1)\}$. 

\medskip

For the set $\BB_{\overline{n}}$ we define models $\MM_{i \rightarrow j}$ for some pairs $(i, j) \in \mathrm{supp}(\overline{n}) \times \mathrm{supp}(\overline{n})$ as follows. 
For $i \neq 1$ and any $j \in \mathrm{supp}(\overline{n})$, the model $\MM_{i \rightarrow j}$ is obtained from the model $\MM_0$ by changing one point of type $\pi_i$ to type $\pi_j$. If $i = 1$ and $n_j = d$, then the model $\MM_{i \rightarrow j}$ is obtained from $\MM_0$ by changing $|\pi_1|_{\MM_0}-d+1$ points $w \neq 1$ of type $\pi_1$ to type $\pi_j$. 
If $i = 1$ and $n_j < d$, no model is defined for the pair $(i, j)$. We set $\BB_{\overline{n}} := \{(\MM_{i \rightarrow j}, 1) \mid i, j \in \mathrm{supp}(\overline{n}) \}$.

\medskip

In terms of their tuples $\overline{n}'$, the models $\MM_{i \rightarrow j}$ have $n'_i = n_i -1$ and $n'_j = n_j +1$, except if $n_j = d$, in which case $n'_j = n_j = d$. For all other indices $\ell$, $n'_\ell = n_\ell$. All models $(\MM_0,1)$ and $(\MM_{i \rightarrow j},1)$ are propositionally equivalent as they realize the type $\pi_1$ in the point 1.

\medskip

Let $P = (r, \AA, \BB)$ be a position of the game $\game_d(r, \AA_{\overline{n}},\BB_{\overline{n}})$. We define a directed graph $\G(\AA, \BB) := (V, E)$, where $V = \mathrm{supp}(\overline{n})$ and $(i, j) \in E$ if there are propositionally equivalent $(\MM_0, w) \in \AA$ and $(\MM_{i \rightarrow j}, v) \in \BB$, or vice versa with respect to $\AA$ and $\BB$. We call a set $C \subseteq \mathrm{supp}(\overline{n})$ a \textbf{cover} of $\G(\AA, \BB)$ if for every $(i, j) \in E$, we have that either $i \in C$ or both $j \in C$ and $n_j < d$. The \textbf{cost} $r(C)$ of a cover $C$ is 
\[
r(C) := \sum\limits_{i \in C} n_i.
\]
Intuitively, the definition of a cover means that including an index $i \in I$ generally covers all edges to and from $i$. The exception is that when $n_i = d$, incoming edges are not covered.

In the proof of the following lemma we use the notation $\mods(\AA) = \{\MM \mid (\MM, w) \in \AA, w \in W\}$ for the set of models that have at least one pointed version in the set $\AA$ of pointed models. We also denote by $\tp(A)$ the set of propositional types realized in the set $A$ of points in a model $\MM$ that will below be clear from the set of points.

\begin{lemma}\label{coverlemma}
Let $P = (r, \AA, \BB)$ be a position of the game $\game_d(r, \AA_{\overline{n}},\BB_{\overline{n}})$ and let 
\[
R(P) := \min \{r(C) \mid C \text{ \emph{is a cover of} } \G(\AA, \BB)\}.
\]
If $r < R(P)$, then D has a winning strategy from position $P$.
\end{lemma}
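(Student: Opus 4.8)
The plan is to prove the lemma by induction on $r$, following the standard pattern for formula size games: show that for every move S can make from position $P = (r, \AA, \BB)$ with $r < R(P)$, the resulting position $P'$ still satisfies the hypothesis $r' < R(P')$ (so D can keep playing), and that S can never win immediately via a \propmove. The base case $r = 0$ is immediate since D wins outright. The immediate-win case is also easy: every model in $\BB_{\overline{n}}$ is propositionally equivalent to the model $(\MM_0, 1)$ in $\AA_{\overline{n}}$ at point $1$, and this propositional equivalence is preserved along both branches of any \lormove\ or \landmove\ (D picks the branch keeping such a pair) and by any modal move that keeps point $1$ — more carefully, by the structure of the game one always retains a propositionally equivalent pair across $\AA$ and $\BB$, so by Lemma~\ref{lem:samemodel} no \propmove\ can win for S. So the real content is the modal moves.

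For the Boolean moves, I would argue: if S plays a \lormove\ splitting $\AA = \AA_1 \cup \AA_2$ with $r_1 + r_2 + 1 = r$, then $\G(\AA_1, \BB)$ and $\G(\AA_2, \BB)$ each have edge sets that are subsets of $E(\G(\AA,\BB))$, and crucially $E(\G(\AA_1,\BB)) \cup E(\G(\AA_2,\BB)) = E(\G(\AA,\BB))$ since each $(\MM_0, w)\in\AA$ lands in $\AA_1$ or $\AA_2$. Hence a cover of $\G(\AA_1,\BB)$ of cost $\leq r_1$ together with a cover of $\G(\AA_2,\BB)$ of cost $\leq r_2$ would combine (by taking the union of index sets, whose cost is at most the sum) into a cover of $\G(\AA,\BB)$ of cost $\leq r_1 + r_2 < r < R(P)$, a contradiction; so $R(P_i) > r_i$ for at least one $i$, and D moves there. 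The \landmove\ is symmetric because the graph is defined symmetrically in $\AA,\BB$ (using "or vice versa").

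The main obstacle, and the heart of the proof, is the modal moves — the \ddimove, \bbomove, \edimove, \ebomove. Here I would take S's move with parameter $k \leq d$ (or $k < d$ for the exact moves), which costs $k$ (or $k+1$), and show that from the new position $P' = (r-k, \AA', \BB')$ any cover $C'$ of $\G(\AA', \BB')$ can be turned into a cover $C$ of $\G(\AA, \BB)$ with $r(C) \leq r(C') + k$ (respectively $+ k+1$): then $r(C') < R(P) - k \leq r(C)\,$-type bookkeeping forces $r - k < R(P')$. The key combinatorial claim is that an edge $(i,j)$ of $\G(\AA,\BB)$ that is \emph{not} covered by $C'$ gives rise, after S's choice of $k$ points out of each of $\MM_0$ and $\MM_{i\to j}$, to a surviving propositionally-equivalent pair witnessing an edge of $\G(\AA',\BB')$ unless S "spends" enough of the budget $k$ on the $n_i$ points of type $\pi_i$ — i.e., one shows that to kill edge $(i,j)$ without already having $i\in C'$ one must, on the $\AA$-side, pick all points chosen to include/exclude witnesses in a way that effectively commits to type $\pi_i$, and the only way the resulting pair fails to be an edge is if the $\di^{\geq}$/$\di^{=}$ counting separates them, which needs $n_i$ of the budget; so adding $i$ to $C'$ (absorbing cost $n_i$, but the move only paid $k \geq n_i$ when this is forced) repairs the cover. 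The delicate points are: (1) the asymmetry in the definition of "cover" when $n_j = d$ — incoming edges to a $d$-index are not required to be covered, which must match the fact that S cannot profitably attack such edges with a threshold diamond since $\MM_0$ and $\MM_{i\to j}$ agree on having $\geq d$ points of type $\pi_j$; and (2) handling the exact-counting moves, where S on the $\BB$-side may pick either $k+1$ points or $|\MM| - k + 1$ points, so one must check both sub-cases preserve a witnessing pair at cost $\leq k+1$. I expect that carefully setting up, for each edge $(i,j)$, exactly which points S is forced to choose to have any hope of destroying the corresponding edge in $\G(\AA',\BB')$, and then reading off the minimum budget this consumes, is where essentially all the work lies.
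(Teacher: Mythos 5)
Your overall architecture is exactly the paper's: maintain the invariant $r < R(P)$ move by move (or else hand D a win via Lemma~\ref{lem:samemodel}), dispose of the \propmove{} by noting that $R(P)>0$ forces a propositionally equivalent pair across the two sides, handle the \lormove{}/\landmove{} by observing that covers of the two branches union to a cover of cost $\leq r_1+r_2 < r$, and handle each modal move of cost $k$ (resp.\ $k+1$) by showing that a cover of the new position extends to a cover of the old one at additional cost at most $k$ (resp.\ $k+1$). The Boolean-move and \propmove{} arguments you give are complete and correct.

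The gap is that the modal-move analysis — which you yourself flag as "where essentially all the work lies" — is not actually carried out, and it is the bulk of the paper's proof. What the paper establishes there, and what your sketch only gestures at, is a precise worst-case characterization: the \emph{only} way S can destroy edges is to select, in every copy of the relevant model, exactly the set of all points realizing some set $\Pi$ of types with $\sum_{\pi_i\in\Pi} n_i = k$ (or $k-1$, or $k$ computed in $\MM_{i\to j}$, depending on which side $\MM_0$ sits and which modal move is played); any deviation from this leaves a surviving propositionally equivalent pair, or puts the same model on both sides so that Lemma~\ref{lem:samemodel} applies (e.g.\ when S mixes $P$- and $N$-choices in an \edimove, or when $\sum_{\pi_i\in\tp(P_{\MM_0})} n_i > k$). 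Only then does the single set $C=\{i : \pi_i\in\Pi\}$, of cost $\leq k$ (resp.\ $\leq k+1$, using $\sum_{i\in C} n_i \leq \sum_{i\in C} n_i' + 1$), cover \emph{all} edges eliminated by that one move — your per-edge phrasing "adding $i$ to $C'$ absorbing cost $n_i$" risks double-charging and does not by itself yield the global bound $r(C)\leq k$. You also correctly flag but do not resolve the matching between the asymmetric cover condition (incoming edges to an index with $n_j=d$ need not be covered) and the game: the paper verifies that every edge $(i,j)$ eliminated "into" $\Pi$ via an $N$-choice necessarily has $n_j<d$ because $\MM_0$ then has at most $k-1<d$ points of type $\pi_j$. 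Without these verifications the cost-$k$ extension claim, and hence the lemma, is unproven.
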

\begin{proof}
We show for all possible moves of S that either the condition $r < R(P)$ is maintained or D has a winning strategy for some other reason. Since the definition of the graph $\G(\AA, \BB)$ is symmetrical with respect to $\AA$ and $\BB$, we only need to handle one of each pair of dual moves. 

\smallskip

\noindent \textbf{\propmove}: Since $0 < r < R(P)$, we have propositionally equivalent models on both sides of the game and thus D wins if S makes any \propmove.

\smallskip

\noindent \textbf{\lormove}: Let $\AA_1, \AA_2 \subseteq \AA$ and $r_1, r_2 \geq 1$ be the choices of S and let $P_1 = (r_1, \AA_1, \BB)$ and $P_2 = (r_2, \AA_2, \BB)$. For each $(i, j) \in E$ there is a pair of propositionally equivalent models $(\MM, w) \in \AA$ and $(\MM', v) \in \BB$ as witnesses. Since $\AA_1 \cup \AA_2 = \AA$, each model $(\MM, w) \in \AA$ is in $\AA_1$ or $\AA_2$. The set $\BB$ remains unchanged so for each $(i, j) \in E$ we have $(i, j) \in E_1$ or $(i, j) \in E_2$. Now if $r_1 \geq R(P_1)$ and $r_2 \geq R(P_2)$, then there is a minimal cover $C_1$ of position $P_1$ with $r(C_1) \leq r_1$ and the same for $P_2$. Now the set $C := C_1 \cup C_2$ is a cover of position $P$ with $r(C) \leq r_1 + r_2 < r$, which is a contradiction. Therefore we have $r_1 < R(P_1)$ or $r_2 < R(P_2)$ and D can maintain the condition by choosing such a position.

\smallskip

\noindent \textbf{\ddimove}: Let $k \leq d$ be the number chosen by S. The following position is $P' = (r-k, \AA', \BB')$. We first note that if $\MM_0 \in \mods(\AA) \cap \mods(\BB)$, then S must choose $k$ points from the version in $\AA$ and $n-k+1$ points from the version in $\BB$. Thus the next position $P'$ will have propositionally equivalent versions $(\MM_0, w) \in \AA'$ and $(\MM_0, v) \in \BB'$. By Lemma \ref{lem:samemodel} this gives D a winning strategy so we assume $\MM_0$ is only present on one side of the game. 

\smallskip

\noindent \textbf{Case $\AA$}: $\MM_0 \in \mods(\AA)$. For each $(\MM_0, w) \in \AA$, S chooses a set $P_{\MM_0,w}$ of $k$ points. Let $P_{\MM_0} := \bigcup_{w \in W} P_{\MM_0, w}$. 
We consider the following cases:

\smallskip

\noindent \textbf{1)} We have $\sum_{\pi_i \in \tp(P_{\MM_0})} n_i > k$. Let $(i, j) \in E$. Since the model $\MM_{i \rightarrow j}$ only differs from $\MM_0$ by $n'_i = n_i -1$ and $n'_j \geq n_j$, the model $\MM_{i \rightarrow j}$ has at least $k$ points with types from $\tp(P_{\MM_0})$. Thus, when S chooses the set $N_{\MM_{i \rightarrow j}}$ of $n-k+1$ points, it contains at least one point with a type from $\tp(P_{\MM_0})$. Thus the pair of propositionally equivalent models is maintained and $(i, j) \in E'$.

\smallskip
    
    \noindent \textbf{2)} We have $\sum_{\pi_i \in \tp(P_{\MM_0})} n_i = k$. Let $(i, j) \in E$. Assume $\pi_i \notin \tp(P_{\MM_0})$ or $\pi_j \in \tp(P_{\MM_0})$. Now the model $\MM_{i \rightarrow j}$ has at least $k$ points with types from $\tp(P_{\MM_0})$ and $(i, j) \in E'$ as in case 1. 

    Assume then that $\pi_i \in \tp(P_{\MM_0})$ and $\pi_j \notin \tp(P_{\MM_0})$. All edges of this kind being eliminated is an acceptable worst case, so we assume that $(i, j) \notin E'$.

\smallskip

Summing up Case $\AA$, the worst case is that S chooses a set $\tp(P_{\MM_0})$ of types with $\sum_{\pi_i \in \tp(P_{\MM_0})} n_i = k \leq d$ and eliminates all edges with $\pi_i \in \tp(P_{\MM_0})$ and $\pi_j \notin \tp(P_{\MM_0})$. 

\smallskip

\noindent \textbf{Case $\BB$}: $\MM_0 \in \mods(\BB)$. For each $(\MM_0, w) \in \BB$, S chooses a set $N_{\MM_0,w}$ of $n-k+1$ points. Let $N_{\MM_0} := \bigcup_{w \in W} N_{\MM_0, w}$ and let $\Pi$ be the complement of $\tp(N_{\MM_0})$. We note that $n_i < d$ for each $\pi_i \in \Pi$ since $\MM_0$ only has at most $k-1$ points with types from $\Pi$.

Let $(i,j) \in E$ and assume $\pi_i \in \Pi$ or $\pi_j \notin \Pi$. Now $\pi_i \notin \tp(N_{\MM_0})$ or $\pi_j \in \tp(N_{\MM_0})$ so the model $\MM_{i \rightarrow j}$ has at least $n-k+1$ points with types from $\tp(N_{\MM_0})$. Thus any set $P_{\MM_{i \rightarrow j},v}$ of $k$ points contains at least one point with a type from $\tp(N_{\MM_0})$. Thus the pair of propositionally equivalent models is maintained and $(i,j) \in E'$. 

Now assume $\pi_i \notin \Pi$ and $\pi_j \in \Pi$. If $\MM_{i \rightarrow j}$ has at least $n-k+1$ points with types from $\tp(N_{\MM_0})$, then $(i,j) \in E'$ as above. We thus assume that $\MM_{i \rightarrow j}$ has less than $n-k+1$ points with types from $\tp(N_{\MM_0})$, meaning it has at least $k$ points with types from $\Pi$. Since $n_j < d$ and $\MM_{i \rightarrow j}$ differs from $\MM_0$ only by $n'_i = n_i -1$ and $n'_j = n_j + 1$, the only remaining option is that $\MM_{i \rightarrow j}$ has exactly $k$ points with types from $\Pi$. Thus by the same reasoning $\MM_0$ has exactly $k-1$ points with types from $\Pi$ and since $k-1 < d$, we have $\sum_{i \in \Pi} n_i = k-1$. We again accept S eliminating these edges as a worst case and assume $(i,j) \notin E'$. 

\smallskip

Summing up Case $\BB$, the worst case is that S chooses a set $\Pi$ of types with $\sum_{\pi_i \in \Pi} n_i = k-1<d$ and eliminates all edges with $\pi_i \notin \Pi$ and $\pi_j \in \Pi$.

\smallskip

We now consider the condition $r-k < R(P')$ in the following position. From the above arguments we see that the only way for S to eliminate edges moving from $\G(\AA, \BB)$ to $\G(\AA', \BB')$ is to choose in each model $(\MM_0, w)$ or $(\MM_{i \rightarrow j}, v)$ in $\AA$ exactly all points that satisfy some set $\Pi$ of types. If $\MM_0 \in \mods(\AA)$, we have $\sum_{\pi_i \in \Pi} n_i = k \leq d$ and S can eliminate all edges from types in $\Pi$ to other types. If $\MM_0 \in \mods(\BB)$, we have $\sum_{\pi_i \in \Pi} n_i = k-1$ and $n_i < d$ for all $\pi_i \in \Pi$. In this case S can eliminate all edges from other types to types in $\Pi$. In both cases, the set $C = \{i \in \mathrm{supp}(\overline{n}) \mid \pi_i \in \Pi\}$ covers all eliminated edges. The cost of $C$ is $r(C) = \sum_{i \in C} n_i \leq k$. Let $C'$ be a cover of $P'$ with minimal cost so $r(C') = R(P')$. Now $C \cup C'$ is a cover of $P$ so $r < R(P) \leq R(P') + r(C) \leq R(P') + k$. Thus $r-k < R(P')$.

\noindent \textbf{\edimove}: Let $k < d$ be the number chosen by S. The following position is $P' = (r-k-1, \AA', \BB')$. As for the \ddimove, we may assume that $\MM_0$ is only present on one side of the game. 

\smallskip

\noindent \textbf{Case $\AA$}: $\MM_0 \in \mods(\AA)$. For each $(\MM_0, w) \in \AA$, S chooses a $k$ point set $P_{\MM_0, w}$ and $N_{\MM_0, w} = W \setminus P_{\MM_0, w}$. We denote $P_{\MM_0} := \bigcup_{w \in W} P_{\MM_0, w}$ and $N_{\MM_0} := \bigcup_{w \in W} N_{\MM_0, w}$. We consider the following cases:

\smallskip
 
 \noindent \textbf{1)} We have $\sum_{\pi_i \in \tp(P_{\MM_0})} n_i > k$. Thus there are propositionally equivalent $w \in P_{\MM_0}$ and $v \in N_{\MM_0}$. This means that in the following position $P'$ there are propositionally equivalent versions of the model $\MM_0$ on both sides of the game. D has a winning strategy from $P'$ by Lemma \ref{lem:samemodel}.

\smallskip

 \noindent \textbf{2)} We have $\sum_{\pi_i \in \tp(P_{\MM_0})} n_i = k$. Note that since $k < d$, also $n_i < d$ for all $\pi_i \in \tp(P_{\MM_0})$. Let $(i,j) \in E$.

    Assume $\pi_i, \pi_j \notin \tp(P_{\MM_0})$. Now the model $\MM_{i \rightarrow j}$ has exactly $k$ points with types from $\tp(P_{\MM_0})$. Thus if S chooses a $k+1$ point set $P_{\MM_{i \rightarrow j}}$, then one of those points $v'$ has a type $\pi_{\ell} \notin \tp(P_{\MM_0})$. By the definition of the models, $n_{\ell} \neq 0$ so there is a point $w'$ in the model $\MM_0$ of type $\pi_{\ell}$. Now there are propositionally equivalent $(\MM_{i \rightarrow j}, v') \in \AA'$ and $(\MM_0, w') \in \BB'$ so $(i, j) \in E'$. 

    Similarly, if S chooses for $\MM_{i \rightarrow j}$ a set $N_{\MM_{i \rightarrow j}}$ of $n-k+1$ points, then this set contains at least one point $v'$ of a type $\pi_{\ell} \in \tp(P_{\MM_0})$. Now there is a point $w' \in P_{\MM_0}$ of type $\pi_{\ell}$. Thus there are propositionally equivalent $(\MM_0, w') \in \AA'$ and $(\MM_{i \rightarrow j}, v') \in \BB'$ so $(i, j) \in E'$.

    If $\pi_i \in \tp(P_{\MM_0})$ or $\pi_j \in \tp(P_{\MM_0})$, we assume $(i, j) \notin E'$. As for the \ddimove, this is an acceptable worst case for our lower bound. 

    \smallskip

Summing up Case $\AA$, the worst case is that S chooses a set $\tp(P_{\MM_0})$ of types with $\sum_{\pi_i \in \tp(P_{\MM_0})} n_i = k < d$ and eliminates all edges $(i, j) \in E$ with $\pi_i \in \tp(P_{\MM_0})$ or $\pi_j \in \tp(P_{\MM_0})$.

\smallskip

\noindent \textbf{Case $\BB$}: $\MM_0 \in \mods(\BB)$. For each $(\MM_0, w) \in \BB$, S chooses either a set $P_{\MM_0, w}$ of $k+1$ points or a set $N_{\MM_0, w}$ of $n-k+1$ points. There are three cases:

\smallskip

\noindent \textbf{1)} Assume first that there are $(\MM_0, w), (\MM_0, w') \in \BB$ with sets $P_{\MM_0, w}$ and $N_{\MM_0, w'}$ chosen. Since $k+1 + n-k+1 > n$, there is $v \in P_{\MM_0, w} \cap N_{\MM_0, w'}$. In the following position $P'$ the model $(\MM_0, v)$ is on both sides of the game so by Lemma \ref{lem:samemodel} D has a winning strategy from $P'$.

\smallskip

 \noindent \textbf{2)} Assume then that S chooses a set $P_{\MM_0, w}$ of $k+1$ points for each $(\MM_0, w) \in \BB$ and let $P_{\MM_0} = \bigcup_{w \in W} P_{\MM_0, w}$. Let $(i, j) \in E$ and let $(\MM_{i \rightarrow j}, v) \in \AA$ and $(\MM_0, w) \in \BB$ be the corresponding models. 

    Assume $\pi_i \notin \tp(P_{\MM_0})$ or $\pi_j \in \tp(P_{\MM_0})$. Now the model $\MM_{i \rightarrow j}$ has at least $k+1$ points with types from $\tp(P_{\MM_0})$. Thus the set $N_{\MM_{i \rightarrow j}, v}$ of size $n-k$ has at least one point with a type from $\tp(P_{\MM_0})$. Thus the propositionally equivalent pair of models is maintained and $(i, j) \in E'$.

    Assume $\pi_i \in \tp(P_{\MM_0})$ and $\pi_j \notin \tp(P_{\MM_0})$. If $\MM_{i \rightarrow j}$ has at least $k+1$ points with types from $\tp(P_{\MM_0})$, the above argument again works and $(i, j) \in E'$. Since $k < d$ and $\MM_{i \rightarrow j}$ only differs from $\MM_0$ by $n'_i = n_i -1$ and $n'_j \leq n_j +1$, the only remaining option is that $\MM_{i \rightarrow j}$ has exactly $k$ points with types from $\tp(P_{\MM_0})$. We obtain $\sum_{i \in \tp(P_{\MM_0})} n'_i = k$. We again assume as a worst case that $(i,j) \notin E'$. 

    \smallskip

  \noindent \textbf{3)} Finally assume that S chooses a set $N_{\MM_0, w}$ of $n-k+1$ points for each $(\MM_0, w) \in \BB$ and let $N_{\MM_0} = \bigcup_{w \in W} N_{\MM_0, w}$. Let $\Pi$ be the complement of $\tp(N_{\MM_0})$.

    Assume $\pi_i \in \Pi$ or $\pi_j \notin \Pi$. Now $\pi_i \notin \tp(N_{\MM_0})$ or $\pi_j \in \tp(N_{\MM_0})$ so the model $\MM_{i \rightarrow j}$ has at least $n-k+1$ points with types from $\tp(N_{\MM_0})$. Thus the set $P_{\MM_{i \rightarrow j}, v}$ of size $k$ has at least one point with a type from $\tp(N_{\MM_0})$. Thus the propositionally equivalent pair of models is maintained and $(i,j) \in E'$.

    Assume $\pi_i \notin \Pi$ and $\pi_j \in \Pi$. If $\MM_{i \rightarrow j}$ has at least $n-k+1$ points with types from $\tp(N_{\MM_0})$, the above argument again works and $(i,j) \in E'$. 
    
    We assume $\MM_{i \rightarrow j}$ has less than $n-k+1$ points with types from $\tp(N_{\MM_0})$, meaning it has at least $k$ points with types from $\Pi$. Since $k < d$ and $\MM_{i \rightarrow j}$ only differs from $\MM_0$ by $n'_i = n_i -1$ and $n'_j \leq n_j +1$, the only remaining option is that $\MM_{i \rightarrow j}$ has exactly $k$ points with types from $\Pi$. We obtain $\sum_{i \in \Pi} n'_i = k$. We again assume as a worst case that $(i,j) \notin E'$. Edges eliminated this way are ones from other types to types in $\Pi$. In particular, we note that for any edge $(i,j)$ eliminated this way, $n_j < d$ since $\MM_0$ has at most $n-(n-k+1) = k-1$ points of type $\pi_j$.

    \smallskip

Summing up Case $\BB$, the worst case is that S chooses a set $\Pi$ of types  and eliminates either all edges $(i, j) \in E$ with $\pi_i \in \Pi$ or $\pi_j \notin \Pi$ or all edges $(i, j) \in E$ with $\pi_i \notin \Pi$ or $\pi_j \in \Pi$. In both cases $\sum_{i \in \Pi} n'_i = k$ for the model $\MM_{i \rightarrow j}$ and for the latter case $n_j < d$.

\smallskip

We now consider the condition $r-k-1 < R(P')$ in the following position. From the above arguments, we see that the only way for S to eliminate edges moving from $\G(\AA, \BB)$ to $\G(\AA', \BB')$ is to choose in each model $(\MM_0, w)$ or $(\MM_{i \rightarrow j}, v)$ in $\AA$ exactly all points that satisfy some set $\Pi$ of types. If $\MM_0 \in \mods(\AA)$, we have $\sum_{\pi_i \in \Pi} n_i = k < d$ and S can eliminate all edges to and from the set $\Pi$ of types. If $\MM_0 \in \mods(\BB)$, we have $\sum_{\pi_i \in \Pi} n'_i = k < d$ and S can eliminate either all edges to or all edges from the set $\Pi$ of types. In particular, if $n_j = d$ for $\pi_j \in \Pi$, then only outgoing edges can be eliminated. In all cases, the set $C = \{i \in \mathrm{supp}(\overline{n}) \mid \pi_i \in \Pi\}$ covers all eliminated edges. The cost of $C$ is $r(C) = \sum_{i \in C} n_i \leq k+1$ since $\sum_{i \in C} n_i \leq \sum_{i \in C} n'_i+1$. Let $C'$ be a cover of $P'$ with minimal cost so $r(C') = R(P')$. Now $C \cup C'$ is a cover of $P$ so $r < R(P) \leq R(P') + r(C) \leq R(P') + k + 1$. Thus $r-k-1 < R(P')$.
\end{proof}


A lower bound for the description complexity of an arbitrary class $M_{\overline{n}}$ can now be obtained by simply calculating the minimum cost of a cover of $\G(\AA_{\overline{n}},\BB_{\overline{n}})$. 

\begin{lemma}\label{thm:two-different-indexes-dcomplexity}
Let $M_{\overline{n}}$ be a class with $n_i = d$ for at least two different $i \in I$.
Then $C(M_{\overline{n}}) \geq \sum\limits_{i \in I} n_i$.
\end{lemma}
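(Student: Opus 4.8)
The plan is to deduce the bound from the formula size game. Let $\varphi \in \gmlu_d[\tau]$ be any formula defining $M_{\overline{n}}$; I must show $\size(\varphi) \geq \sum_{i\in I} n_i$. First I would check that the model sets $\AA_{\overline{n}}$ and $\BB_{\overline{n}}$ constructed above are separated by $\varphi$ in the right direction. Since $\MM_0 \in M_{\overline{n}}$ we have $\AA_{\overline{n}} \vDash \varphi$. For $\BB_{\overline{n}}$, each $\MM_{i\rightarrow j}$ has a tuple $\overline{n}'$ with $n'_i = n_i - 1$; since $n_i \leq d$ this strictly decreased coordinate is still faithfully recorded in $\overline{n}'$ (it is not clipped to $d$), so $\overline{n}' \neq \overline{n}$ and hence $\MM_{i\rightarrow j} \notin M_{\overline{n}}$, giving $\BB_{\overline{n}} \vDash \neg\varphi$. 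By Theorem \ref{thm:game}, S has a winning strategy in $\game_d(\size(\varphi), \AA_{\overline{n}}, \BB_{\overline{n}})$, so by Lemma \ref{coverlemma} in contrapositive form, $\size(\varphi) \geq R$, where $R$ is the minimum cost of a cover of $\G(\AA_{\overline{n}}, \BB_{\overline{n}})$.

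It then remains to show $R \geq \sum_{i\in I} n_i$, which is the crux. The graph $\G(\AA_{\overline{n}}, \BB_{\overline{n}})$ has vertex set $\mathrm{supp}(\overline{n})$, and because the unique model $(\MM_0,1) \in \AA_{\overline{n}}$ and every $(\MM_{i\rightarrow j},1) \in \BB_{\overline{n}}$ agree propositionally at the point $1$ (all realize $\pi_1$ there), its edge set is as large as possible: $(i,j)$ is an edge whenever $\MM_{i\rightarrow j}$ is defined, i.e.\ for all $j \in \mathrm{supp}(\overline{n})$ when $i \neq 1$, and for all $j$ with $n_j = d$ when $i = 1$. I claim that the only cover is the whole vertex set $\mathrm{supp}(\overline{n})$. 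Indeed, fix $\ell \in \mathrm{supp}(\overline{n})$; using the hypothesis that $n_i = d$ for at least two indices, pick an index $j \neq \ell$ with $n_j = d$. Then $(\ell, j)$ is an edge --- when $\ell = 1$ precisely because $n_j = d$, and when $\ell \neq 1$ because $\MM_{\ell\rightarrow j}$ is always defined. The cover condition for $(\ell, j)$ requires that $\ell \in C$, or that both $j \in C$ and $n_j < d$; the latter fails since $n_j = d$, so $\ell \in C$. Hence every cover $C$ equals $\mathrm{supp}(\overline{n})$, so $R = \sum_{i \in \mathrm{supp}(\overline{n})} n_i = \sum_{i\in I} n_i$. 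Combining the two steps, $\size(\varphi) \geq \sum_{i\in I} n_i$ for every defining $\varphi$, i.e.\ $C(M_{\overline{n}}) \geq \sum_{i\in I} n_i$.

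The main obstacle is the cover computation in the second step, but once the structure of $\G(\AA_{\overline{n}}, \BB_{\overline{n}})$ is identified it is short: the hypothesis that $n_i = d$ for at least two indices is exactly what blocks the cheaper covering option of omitting a vertex $\ell$ and paying instead for its out-neighbours, since an out-neighbour $j$ with $n_j = d$ never covers the edge $(\ell,j)$ by the definition of cover. All the genuine combinatorial work --- the game, the model sets, and the cover invariant --- has already been done in Lemma \ref{coverlemma}, so the present statement is essentially a corollary of it; the only additional care needed is the bookkeeping in the first step, verifying that every $\MM_{i\rightarrow j}$ really leaves $M_{\overline{n}}$, which is immediate from the description of the tuples $\overline{n}'$ recorded just before Lemma \ref{coverlemma}.
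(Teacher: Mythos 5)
Your proof is correct and follows essentially the same route as the paper's: reduce to the game via Theorem \ref{thm:game} and Lemma \ref{coverlemma}, then show that the only cover of $\G(\AA_{\overline{n}},\BB_{\overline{n}})$ is all of $\mathrm{supp}(\overline{n})$, using a second index $j\neq\ell$ with $n_j=d$ to force every vertex $\ell$ into the cover. The only (welcome) addition is your explicit verification that each $\MM_{i\rightarrow j}$ falls outside $M_{\overline{n}}$, which the paper leaves implicit.
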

\begin{proof}
We assume that $n_1 = d$. We consider the graph $\G(\AA_{\overline{n}}, \BB_{\overline{n}})= \{(i,j) \in \mathrm{supp}(\overline{n}) \mid i \neq 1 \text{ or } n_j =d\}$. This graph has edges from all types $\pi_i$ with $n_i \neq 0$ to each other, with the exception that there are no edges from $\pi_1$ to $\pi_j$ with $n_j < d$. 

We first note that $C = \mathrm{supp}(\overline{n})$ is a cover of $\G(\AA_{\overline{n}}, \BB_{\overline{n}})$ with cost $\sum_{i \in I} n_i$. For $C' \neq C$ if $i \notin C'$ with $i \neq 1$, then by the definition of a cover, the edge $(i,1) \in E$ is not covered, since $i \notin C$ and $n_1 = d$. If $1 \notin C'$, then let $j \in I$ be the other index with $n_j = d$ besides 1. Now the edge $(1, d) \in E$ is not covered since $1 \notin C$ and $n_j = d$. We see that $C$ is a minimal cost cover and by Lemma \ref{coverlemma} and Theorem \ref{thm:game} the claim holds.
\end{proof}

\begin{lemma}\label{thm:one-index-dcomplexity}
Let $M_{\overline{n}}$ be a class with $n_i = d$ for at most one $i \in I$. Let $\pi_j$ be one of the types with the most realizing points in $M_{\overline{n}}$. 
Then $C(M_{\overline{n}}) \geq  \sum_{i \in I \setminus \{j\}} n_i = n - |\pi_j|$.
\end{lemma}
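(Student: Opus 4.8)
We will argue exactly as in the proof of Lemma \ref{thm:two-different-indexes-dcomplexity}, by computing the minimum cost of a cover of the graph $\G(\AA_{\overline{n}},\BB_{\overline{n}})$ attached to the starting position $P_0$ of $\game_d(r,\AA_{\overline{n}},\BB_{\overline{n}})$. The reason this suffices: $\MM_0\in M_{\overline{n}}$, whereas every model $\MM_{i\to j}$ has a tuple different from $\overline{n}$ and hence lies in another class of $\equiv_d$, so any $\varphi\in\gmlu_d[\tau]$ defining $M_{\overline{n}}$ satisfies $\AA_{\overline{n}}\vDash\varphi$ and $\BB_{\overline{n}}\vDash\neg\varphi$. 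By Theorem \ref{thm:game} player S then wins $\game_d(\size(\varphi),\AA_{\overline{n}},\BB_{\overline{n}})$, and by Lemma \ref{coverlemma} this forces $\size(\varphi)\geq R(P_0)$, the minimum cover cost. Identifying $j$ with the distinguished index $1$ of the construction (which is legitimate since $\pi_1$ is a type with the most realizing points), it remains to show that every cover $C$ of $\G(\AA_{\overline{n}},\BB_{\overline{n}})$ satisfies $r(C)\geq\sum_{i\neq 1}n_i$. Since in this lemma $n_i<d$ for every $i\neq 1$, we have $n_i=|\pi_i|$ there, so $\sum_{i\neq 1}n_i=\sum_{i\neq 1}|\pi_i|=n-|\pi_1|$, which is the claimed bound; if $\mathrm{supp}(\overline{n})=\{1\}$ the bound is trivially $0$, so we may assume otherwise.

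Next I would pin down the graph. Because $\pi_1$ has the most realizing points and at most one index has $n_i=d$, the only index that can satisfy $n_i=d$ is $1$ itself; consequently no model $\MM_{1\to j}$ is defined (this would require $n_j=d$ with $j\neq 1$), so vertex $1$ has no outgoing edges in $\G(\AA_{\overline{n}},\BB_{\overline{n}})$. On the other hand, for every $i\in\mathrm{supp}(\overline{n})\setminus\{1\}$ the edge $(i,1)$ is present, and the indices in $\mathrm{supp}(\overline{n})\setminus\{1\}$ induce a complete digraph. In particular $C:=\mathrm{supp}(\overline{n})\setminus\{1\}$ is itself a cover of cost exactly $\sum_{i\neq 1}n_i$, since every edge $(i,j)\in E$ has $i\neq 1$, hence $i\in C$.

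For the lower bound on the cover cost I would split into cases. If $n_1=d$: covering $(i,1)$ requires $i\in C$, because the alternative ``$1\in C$ and $n_1<d$'' fails, so $C\supseteq\mathrm{supp}(\overline{n})\setminus\{1\}$ and $r(C)\geq\sum_{i\neq 1}n_i$. If $n_1<d$ and $1\notin C$: again each edge $(i,1)$ forces $i\in C$, giving $C\supseteq\mathrm{supp}(\overline{n})\setminus\{1\}$. If $n_1<d$ and $1\in C$: put $D:=\mathrm{supp}(\overline{n})\setminus C$; if $D$ contained two distinct indices $i,i'$ then the edge $(i,i')$ (present, as $i,i'\neq 1$) would be uncovered, so $|D|\leq 1$. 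If $D=\emptyset$ then $r(C)=\sum_i n_i\geq\sum_{i\neq 1}n_i$; if $D=\{i^*\}$ then $r(C)=\sum_{i\neq 1}n_i+n_1-n_{i^*}\geq\sum_{i\neq 1}n_i$, using $n_1=|\pi_1|\geq|\pi_{i^*}|=n_{i^*}$ because $\pi_1$ is a type with the most realizing points. In every case $r(C)\geq\sum_{i\neq 1}n_i$, so $R(P_0)=\sum_{i\neq 1}n_i=n-|\pi_1|$, and the lemma follows by Lemma \ref{coverlemma} and Theorem \ref{thm:game}.

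The only step carrying a genuine idea, as opposed to bookkeeping parallel to Lemma \ref{thm:two-different-indexes-dcomplexity}, is the last subcase ($n_1<d$, $1\in C$, $D=\{i^*\}$): there one is permitted to drop vertex $1$ from the cover, and one must compensate by dropping at most one further vertex, which is exactly where the maximality hypothesis ``$\pi_j$ has the most realizing points'' is used. I expect the main care to go into making the description of $\G(\AA_{\overline{n}},\BB_{\overline{n}})$ precise — especially the fact that vertex $1$ has no outgoing edges — after which the cover computation is short.
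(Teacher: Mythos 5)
Your proposal is correct and follows essentially the same route as the paper: identify $j$ with the distinguished index $1$, observe that $\G(\AA_{\overline{n}},\BB_{\overline{n}})$ has no edges leaving vertex $1$, and show that $\mathrm{supp}(\overline{n})\setminus\{1\}$ is a minimum-cost cover, invoking Lemma \ref{coverlemma} and Theorem \ref{thm:game}. Your case analysis of the lower bound on cover cost (in particular the subcase where vertex $1$ is dropped and the maximality of $n_1$ compensates) is just a more explicit version of the paper's argument.
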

\begin{proof}
We assume $n_1 = \max\{n_i \mid i \in I\}$. Thus $\pi_1$ is one of the types with the most realizing points and if $n_1 < d$, then $n_i < d$ for all $i \in I$. 

We consider the graph $\G(\AA_{\overline{n}}, \BB_{\overline{n}})$. From the definition we get $G(\AA_{\overline{n}}, \BB_{\overline{n}})= \{(i,j) \in \mathrm{supp}(\overline{n}) \mid i \neq 1\}$. This graph has edges from all types $\pi_i$ with $n_i \neq 0$ to each other, with the exception that there are no edges originating from $1$. 

We first note that $C = \mathrm{supp}(\overline{n}) \setminus \{1\}$ is a cover of $\G(\AA_{\overline{n}}, \BB_{\overline{n}})$ with $r(C) = \sum_{i \in I \setminus \{1\}} n_i = n - |\pi_1|$. Clearly $\mathrm{supp}(\overline{n})$ is a cover with higher cost. 

Let $C' \subset \mathrm{supp}(\overline{n})$. If there are $i, j \in \mathrm{supp}(\overline{n})$ with $i, j \notin C'$, then the edge $(i, j)$ or $(j,i)$ is in $E$ and is not covered. If there is only one $i \in \mathrm{supp}(\overline{n})$ with $i \notin C'$ and $i \neq 1$, then $r(C') = \sum_{i \in I \setminus \{j\}} n_i \geq r(C)$ since $\pi_1$ is one of the types with the largest $n_i$. We see that $C$ is a minimal cost cover and by Lemma \ref{coverlemma} and Theorem \ref{thm:game} the claim holds.
\end{proof}

We sum up the above lemmas into the Theorem below:
\begin{theorem}\label{thm:main-descriptive-complexity}
Let $\overline{n}$ be an $(n,d)$-admissible tuple and let $M_{\overline{n}}$ be the corresponding equivalence class of $\equiv_{d}$. If $n_i = d$ for at least two $i \in I$, then $C(M_{\overline{n}}) \geq \sum_{i \in I} n_i$. Otherwise $C(M_{\overline{n}}) \geq  \sum_{i \in I \setminus \{j\}} n_i = n - |\pi_j|$.
\end{theorem}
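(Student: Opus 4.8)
The plan is to obtain Theorem~\ref{thm:main-descriptive-complexity} as an immediate corollary of Lemmas~\ref{thm:two-different-indexes-dcomplexity} and~\ref{thm:one-index-dcomplexity} by splitting on how many coordinates of $\overline{n}$ equal the counting threshold $d$. Given an $(n,d)$-admissible tuple $\overline{n}$, exactly one of the following holds: either $n_i = d$ for at least two distinct $i \in I$, or $n_i = d$ for at most one $i \in I$. These two cases are mutually exclusive and jointly exhaustive, so it suffices to treat each separately.

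In the first case I would simply invoke Lemma~\ref{thm:two-different-indexes-dcomplexity}, which gives $C(M_{\overline{n}}) \geq \sum_{i \in I} n_i$. In the second case I would invoke Lemma~\ref{thm:one-index-dcomplexity}: taking $\pi_j$ to be a type with the most realizing points in $M_{\overline{n}}$, that lemma yields $C(M_{\overline{n}}) \geq \sum_{i \in I \setminus \{j\}} n_i$, and since the points of a model in $M_{\overline{n}}$ that are not of one of the types $\pi_i$ with $i \neq j$ are precisely the $|\pi_j|$ points of type $\pi_j$ out of $n$ in total, we get $\sum_{i \in I \setminus \{j\}} n_i = n - |\pi_j|$. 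Here one should note that when $k_d \leq 1$ and $n_j = d$, we still have $|\pi_j| = n - \sum_{i \neq j} n_i \geq d$, so the two displayed expressions agree regardless of whether $n_j < d$ or $n_j = d$. Concatenating the two cases gives the statement of the theorem.

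There is no genuine obstacle at the level of Theorem~\ref{thm:main-descriptive-complexity} itself; the only thing to be careful about is the consistent choice of $\pi_j$ and the identification of $\sum_{i \in I \setminus \{j\}} n_i$ with $n - |\pi_j|$. All the substance has already been done: the real work underlying the two lemmas is Lemma~\ref{coverlemma} together with the game characterization Theorem~\ref{thm:game}, which reduce the lower bound for $C(M_{\overline{n}})$ to computing the minimum cost of a cover of $\G(\AA_{\overline{n}}, \BB_{\overline{n}})$, namely $\sum_{i \in I} n_i$ when $k_d \geq 2$ and $n - |\pi_j|$ when $k_d \leq 1$. For completeness I would also remark that, read together with the upper bounds $C(M_{\overline{n}}) \leq \sum_{i \in I} n_i + c_\tau$ (for $k_d \geq 2$) and $C(M_{\overline{n}}) \leq n - |\pi_j| + c_\tau$ (for $k_d \leq 1$) established earlier in this section, these bounds are tight up to the additive constant $c_\tau = t(2|\tau|+1) - 1$.
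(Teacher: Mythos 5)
Your proposal is correct and matches the paper exactly: the theorem is stated as a direct summary of Lemmas~\ref{thm:two-different-indexes-dcomplexity} and~\ref{thm:one-index-dcomplexity}, obtained by the same exhaustive case split on whether $d$ occurs at least twice or at most once among the coordinates of $\overline{n}$. Your additional remarks on the identification $\sum_{i \in I \setminus \{j\}} n_i = n - |\pi_j|$ and on tightness up to $c_\tau$ are accurate and consistent with the surrounding text.
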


\section{A monotone connection}\label{monotoneconnection}

Consider the following strict partial orders on the set of all $(n,d)$-admissible tuples.
\begin{enumerate}
    \item $\overline{n} <_s \overline{n}'$ iff $|M_{\overline{n}}| < |M_{\overline{n}'}|$
    \item $\overline{n} <_c \overline{n}'$ iff $C(M_{\overline{n}}) < C(M_{\overline{n}'})$
\end{enumerate}
Note that neither $|M_{\overline{n}}|=|M_{\overline{n}'}|$ nor $C(M_{\overline{n}})=C(M_{\overline{n}'})$ necessarily entails that $\overline{n} = \overline{n}'$, as demonstrated by the tuples $(1,2,d)$ and $(2,1,d)$. We let $\leq_s$ and $\leq_c$ denote the partial orders obtained by adding loops to $<_s$ and $<_c$ respectively. The main purpose of this section is to show that there exists a natural and non-trivial partial order which is contained both in $\leq_s$ and in $\leq_c$ (provided that $n$ is sufficiently large w.r.t. $d$), which then gives us a monotone connection between sizes of model classes and their description complexities. To establish this result, we will use the heavy machinery developed in the previous section together with further combinatorial arguments that involve $r$-associated Stirling numbers.

\subsection{$r$-associated Stirling numbers}

We will use $r$-associated Stirling numbers to count the number of models in a given model class. Given positive integers $n,m,r$ such that $n \geq mr$, we define
\[\stirling{n}{m}_{\geq r}\]
to be the number of partitions of $[n]$ which partition $[n]$ into $m$, each set having size at least $r$. When $r = 1$ these numbers are also known as the Stirling numbers of the second kind and they simply count the number of partitions of $[n]$ into $m$ sets \cite{concretemathematics}.

The following lemma will play the key role when we estimate the sizes of the model classes. Even though the proof is simple and elementary, we were not able to find these estimates in the existing literature.

\begin{lemma}\label{lemma1}
    Let $n,m,r \in \mathbb{Z}_+$. Suppose that $n \geq mr$. Then
    \[\frac{m^n}{m^{mr}} \leq \stirling{n}{m}_{\geq r} \leq \frac{m^n}{m!}\]
\end{lemma}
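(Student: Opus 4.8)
The plan is to prove the two inequalities separately, working directly from the combinatorial definition of $\stirling{n}{m}_{\geq r}$ as the number of partitions of $[n]$ into exactly $m$ blocks, each of size at least $r$.

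For the upper bound, I would compare with the (ordinary) Stirling numbers of the second kind. Every partition counted by $\stirling{n}{m}_{\geq r}$ is in particular a partition of $[n]$ into $m$ nonempty blocks, so $\stirling{n}{m}_{\geq r} \leq \stirling{n}{m}_{\geq 1}$. Now for $\stirling{n}{m}_{\geq 1}$ I would use the standard surjection count: the number of surjections from $[n]$ onto $[m]$ equals $m!\,\stirling{n}{m}_{\geq 1}$, while the number of surjections is trivially at most $m^n$ (the number of all functions $[n]\to[m]$). Hence $\stirling{n}{m}_{\geq 1} \leq m^n/m!$, which gives the desired upper bound.

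For the lower bound, the idea is to manufacture enough partitions with blocks of size at least $r$. First reserve the elements $1,\dots,mr$ and split them into the $m$ fixed blocks $\{1,\dots,r\}, \{r+1,\dots,2r\},\dots$; each of these already has size exactly $r$. Then distribute the remaining $n-mr$ elements $mr+1,\dots,n$ among these $m$ blocks arbitrarily: there are $m^{n-mr}$ such assignments, and every resulting partition has all blocks of size $\geq r$. Distinct assignments could in principle yield the same set partition only by permuting identical-looking blocks, but here the $m$ starting blocks are pairwise distinct as labelled sets (they contain distinct reserved elements), so distinct assignments give distinct labelled block-tuples; collapsing to unlabelled partitions only identifies assignments differing by a permutation of the $m$ labels, and since we want a lower bound it suffices to note the count of distinct partitions obtained is at least $m^{n-mr} = m^n/m^{mr}$. (Even more simply: the map from assignments to partitions is at most $m!$-to-one, but we do not even need that refinement for the stated bound; in fact the labelled construction already exhibits at least $m^{n-mr}$ distinct partitions after noting the reserved cores keep the blocks distinguishable.)

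The main subtlety, and the only place requiring care, is the lower-bound injectivity argument: one must be sure that the $m^{n-mr}$ assignments of the non-reserved elements really do produce at least that many \emph{distinct} unordered partitions. The clean way is to observe that each produced partition has a canonical block containing element $1$, a canonical block containing the least reserved element not in that block, and so on, so the $m$ blocks can be linearly ordered by their reserved cores; reading off which block each element $mr+1,\dots,n$ lies in then recovers the assignment uniquely. This makes the map injective on the nose, yielding $\stirling{n}{m}_{\geq r} \geq m^{n-mr} = m^n/m^{mr}$, and completes the proof. I expect everything else to be routine.
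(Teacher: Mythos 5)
Your proof is correct and takes essentially the same approach as the paper: the upper bound via the identical surjection argument, and the lower bound via the same construction of reserving $m$ blocks of $r$ elements from $\{1,\dots,mr\}$ and distributing the remaining $n-mr$ elements freely. The only (cosmetic) difference is that the paper counts functions $f:[n]\to[m]$ and divides out the factor $m!$ at the end, whereas you count partitions directly and justify injectivity by noting the reserved cores keep the blocks distinguishable; both are valid.
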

\begin{proof}
    For the upper bound note that
    \[\stirling{n}{m}_{\geq r} \leq \stirling{n}{m}_{\geq 1} \leq \frac{m^n}{m!}.\]
    The last inequality follows from the fact that $m^n$ counts the number of mappings $f:[n] \to [m]$, while $m! \cdot \stirling{n}{m}_{\geq 1}$ only counts those $f$ that are surjections. For the lower bound, we use the fact that $m! \cdot \stirling{n}{m}_{\geq r}$ counts the number of mappings $f:[n] \to [m]$ which have the property that for every $1 \leq k \leq m$ we have $|f^{-1}(\{k\})| \geq r$. To give a lower bound on the number of such functions, we count the number of mappings $f:[n] \to [m]$ which have the property that the sets
    $\{(k - 1) \cdot r + 1, \dots, k \cdot r\},$
    where $1 \leq k \leq m$, are mapped to distinct elements. Since the remaining $n - mr$ elements can be mapped arbitrarily, the number of such mappings is simply $m! \cdot m^{n - mr}$. Thus \[m! \cdot m^{n-mr} \leq m! \cdot \stirling{n}{m}_{\geq r},\]
    giving the wanted lower bound after dividing by $m!$.
\end{proof}

We note that if $m$ and $r$ are much smaller than $n$ --- as they will be in our applications --- the estimates of Lemma \ref{lemma1} are (perhaps surprisingly) quite sharp.

One important consequence of Lemma \ref{lemma1} is the following lemma. Again, we emphasize that we were unable to find an estimate of this form in the existing literature.

\begin{lemma}\label{lemma2}
    Let $n,m,r \in \mathbb{Z}_+$. If $n \geq mr + 1$, then
    \[\stirling{n}{m}_{\geq r} \leq \frac{m^{mr+1}}{m!} \stirling{n-1}{m}_{\geq r}.\]
\end{lemma}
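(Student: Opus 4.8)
The plan is to derive Lemma~\ref{lemma2} directly from the two-sided estimate of Lemma~\ref{lemma1}, together with a ``peeling'' identity that relates $\stirling{n}{m}_{\geq r}$ to $\stirling{n-1}{m}_{\geq r}$. Concretely, I would first observe that the upper bound of Lemma~\ref{lemma1} gives
\[
\stirling{n}{m}_{\geq r} \leq \frac{m^n}{m!},
\]
and the lower bound, applied with $n-1$ in place of $n$ (which is legitimate since $n-1 \geq mr$ by hypothesis), gives
\[
\stirling{n-1}{m}_{\geq r} \geq \frac{m^{n-1}}{m^{mr}}.
\]
Dividing these two inequalities, the $m^{n}$ and $m^{n-1}$ factors combine to leave a single factor of $m$, and the bookkeeping yields exactly $\stirling{n}{m}_{\geq r}\big/\stirling{n-1}{m}_{\geq r} \leq \frac{m \cdot m^{mr}}{m!} = \frac{m^{mr+1}}{m!}$, which is the claim.

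The one point that needs care is the applicability of Lemma~\ref{lemma1} to $\stirling{n-1}{m}_{\geq r}$: the hypothesis of that lemma is $n-1 \geq mr$, and here we are given $n \geq mr+1$, so indeed $n - 1 \geq mr$ and both bounds apply without modification. I would state this explicitly so the reader sees why the shift in the index is harmless. Everything else is elementary algebra with powers of $m$, so there is no genuine obstacle; the ``hard part,'' such as it is, is simply making sure the exponents line up, i.e.\ that $m^n / m^{n-1} = m$ and that the $m!$ in the numerator of the first bound and the $m^{mr}$ in the denominator of the second bound end up in the right places.

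An alternative, slightly more self-contained route would be a direct combinatorial argument: view $m!\stirling{n}{m}_{\geq r}$ as counting surjections $f:[n]\to[m]$ with every fiber of size $\geq r$, fix the value $f(n)=k$, and note that removing $n$ yields a function $[n-1]\to[m]$ whose fibers all have size $\geq r$ except possibly the $k$-th, which has size $\geq r-1$; bounding the number of the latter in terms of $\stirling{n-1}{m}_{\geq r}$ would again require Lemma~\ref{lemma1}-type estimates, so it offers no real savings. I would therefore present the short quotient-of-bounds proof. I expect the write-up to be four or five lines.

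\begin{proof}
Since $n \geq mr + 1$, we have $n - 1 \geq mr$, so Lemma~\ref{lemma1} applies both to $\stirling{n}{m}_{\geq r}$ and to $\stirling{n-1}{m}_{\geq r}$. Using the upper bound for the former and the lower bound for the latter,
\[
\stirling{n}{m}_{\geq r} \leq \frac{m^n}{m!}
= \frac{m^{mr+1}}{m!} \cdot \frac{m^{n-1}}{m^{mr}}
\leq \frac{m^{mr+1}}{m!} \stirling{n-1}{m}_{\geq r},
\]
which is the claimed inequality.
\end{proof}
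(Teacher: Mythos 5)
Your proof is correct and is exactly the argument the paper intends: the paper's own proof of this lemma reads only ``Follows immediately from Lemma \ref{lemma1},'' and your quotient-of-bounds computation (upper bound for $\stirling{n}{m}_{\geq r}$, lower bound for $\stirling{n-1}{m}_{\geq r}$, with the observation that $n-1\geq mr$ licenses the latter) is the intended instantiation. Nothing further is needed.
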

\begin{proof}
    Follows immediately from Lemma \ref{lemma1}.
\end{proof}

If $m$ and $r$ are fixed, Lemma \ref{lemma2} bounds the growth rate of $r$-associated Stirling numbers as $n$ increases.

\subsection{Combinatorics of model classes}

In this subsection we use the above results on $r$-associated Stirling numbers to investigate the sizes of model classes in terms of their $(n, d)$-admissible tuples. 
We begin with the following lemma gives a simple and closed formula for the size of a model class.

\begin{lemma}\label{lemma:counting-models}
    Let $\overline{n}$ be an $(n,d)$-admissible tuple. Let $i_1,\dots,i_k \in I$ be the indices for which $n_{i_\ell} < d$. We then have that
    \[|M_{\overline{n}}| = \binom{n}{n_{i_1},\dots,n_{i_k},m} \cdot k_d! \cdot \stirling{m}{k_d}_{\geq d},\]
    where $m := n - \sum_{i \not\in \{i_1,\dots,i_k\}} n_i$ and $k_d := t - k$.
\end{lemma}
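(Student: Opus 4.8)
The plan is to count the models in $M_{\overline{n}}$ directly by a product rule, separating the types that are pinned to an exact count below $d$ from the $k_d$ types whose counts are only required to be at least $d$. First I would recall that a model $\MM \in \mathcal{M}$ is precisely a function assigning to each point of $W = \{1,\dots,n\}$ a propositional type $\pi_i$, so that $\MM \in M_{\overline{n}}$ if and only if for each $i \notin \{i_1,\dots,i_k\}$ the type $\pi_i$ gets at least $d$ points and for each $i_\ell$ the type $\pi_{i_\ell}$ gets exactly $n_{i_\ell}$ points. Since $\sum_{\ell} n_{i_\ell} + m = n$ with $m = n - \sum_{i \notin \{i_1,\dots,i_k\}} n_i$ (note that for admissible $\overline{n}$ the entries not among the $i_\ell$ are exactly those equal to $d$, and there are $k_d = t - k$ of them), building such a model amounts to two independent choices.

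The first step is to choose which points of $W$ carry each of the exactly-specified types and which points are left over for the ``$\geq d$'' types; this is a multinomial choice, giving the factor $\binom{n}{n_{i_1},\dots,n_{i_k},m}$. The second step is to distribute the remaining $m$ points among the $k_d$ remaining types so that each of these types receives at least $d$ of them. I would argue that this is the same as choosing an ordered partition of the $m$-element set of leftover points into $k_d$ nonempty blocks each of size at least $d$, where the $\ell$-th block is assigned to the $\ell$-th remaining type. The number of unordered partitions of an $m$-set into $k_d$ blocks each of size $\geq d$ is by definition $\stirling{m}{k_d}_{\geq d}$, and since the $k_d$ remaining types are distinguishable we multiply by $k_d!$ to account for the assignment of blocks to types. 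Here I should note the boundary case: the definition of $r$-associated Stirling numbers requires $m \geq k_d \cdot d$, which is guaranteed by $(n,d)$-admissibility since each of the $k_d$ types with $n_i = d$ contributes at least $d$ to the leftover count and $\sum_{i \in I} n_i \le n$; if $k_d = 0$ then $m = 0$, the formula reads $|M_{\overline{n}}| = \binom{n}{n_{i_1},\dots,n_{i_k}}$ with the empty product $0! \stirling{0}{0}_{\geq d} = 1$, which is the correct count of models in which every type has a fixed exact multiplicity.

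Multiplying the two independent counts gives exactly $|M_{\overline{n}}| = \binom{n}{n_{i_1},\dots,n_{i_k},m} \cdot k_d! \cdot \stirling{m}{k_d}_{\geq d}$, as claimed. I do not expect a serious obstacle here; the only points requiring care are the bookkeeping that the non-$i_\ell$ indices are precisely the $k_d$ indices with value $d$ (so that $m$ is correctly the number of leftover points and $m \ge k_d d$), and the correct handling of the degenerate case $k_d = 0$. The combinatorial core is just the standard product principle together with the definition of $\stirling{m}{k_d}_{\geq d}$, so the proof is short.
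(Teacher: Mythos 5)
Your proof is correct and essentially identical to the paper's: a multinomial choice for the exactly-counted types, the $r$-associated Stirling number for partitioning the leftover points into $k_d$ blocks of size at least $d$, and a factor $k_d!$ for assigning the remaining types to those blocks. Your extra care about the degenerate case $k_d = 0$ and the bound $m \geq k_d d$ is fine; note only that $m$ must be read as the number of leftover points, $n - \sum_{\ell} n_{i_\ell}$, which is what your argument (and the multinomial coefficient) actually uses.
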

\begin{proof}
    Each model in $M_{\overline{n}}$ can be constructed as follows.

    \smallskip
    
    \noindent\textbf{(1)} We first pick $k$ subsets of $\{1,\dots,n\}$ of sizes $n_{i_1}, \dots, n_{i_k}$ and define that each element in the $i_\ell$th set realizes the $i_\ell$th type. The number of ways this can be done is given by $\binom{n}{n_{i_1},\dots,n_{i_k},m}$.

    \smallskip
    
    \noindent \textbf{(2)} We then partition the remaining subset of size $m$ to $k_d$ pieces, each piece having size at least $d$. The number of ways this can be done is given by the associated Stirling number $\stirling{m}{k_d}_{\geq d}$.

    \smallskip
    
    \noindent \textbf{(3)} For each piece we select a unique type from the remaining types --- the number of which is $k_d$ --- and define that each element in a piece realizes the type associated with that piece. The number of ways this can be done is given by $k_d!$.

    \smallskip
    
    \noindent Multiplying the above factors gives us the result.
\end{proof}

The following lemmas establish how the size of a model class changes when we modify its admissible tuple.

\begin{lemma}\label{lemma:one-more-element}
    Fix $d \in \mathbb{Z}_+$ and let $\overline{n}$ be an $(n,d)$-admissible tuple. Suppose that $n$ is sufficiently large with respect to $d$ and $|\tau|$.
    Then for every $i \in I$ such that $n_i < d - 1$ we have that
    \[|M_{\overline{n}}| < |M_{\overline{n}'}|,\]
    where $n_i' = n_i + 1$ and $n_j' = n_j$, for every $j \neq i$.
\end{lemma}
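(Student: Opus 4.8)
The plan is to compare $|M_{\overline{n}}|$ and $|M_{\overline{n}'}|$ directly via the closed formula of Lemma~\ref{lemma:counting-models}, cancel the common factors, and bound the surviving ratio of $r$-associated Stirling numbers from below using Lemma~\ref{lemma2}.

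First I would record the structural effect of the modification. Since $n_i < d-1$, the new coordinate $n_i' = n_i+1$ is still strictly below $d$, so $\overline{n}$ and $\overline{n}'$ have the same set of indices realizing a type fewer than $d$ times and, in particular, the same number $k_d$ of \emph{saturated} types (those realized at least $d$ times). Let $m$ be the number of points realizing a saturated type, so that Lemma~\ref{lemma:counting-models} reads $|M_{\overline{n}}| = \binom{n}{n_{i_1},\dots,n_{i_k},m}\,k_d!\,\stirling{m}{k_d}_{\geq d}$. When passing to $\overline{n}'$, exactly one entry $n_i$ of the multinomial becomes $n_i+1$ and $m$ drops to $m-1$ (one point is moved out of the saturated part into the $\pi_i$-block), while $k_d$ and all other entries are unchanged. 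Cancelling the common $n!$, $k_d!$ and the untouched factorials, this gives
\[
\frac{|M_{\overline{n}'}|}{|M_{\overline{n}}|}
= \frac{m}{n_i+1}\cdot\frac{\stirling{m-1}{k_d}_{\geq d}}{\stirling{m}{k_d}_{\geq d}}.
\]

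Next I would invoke the hypothesis that $n$ is large relative to $d$ and $|\tau|$. Since every coordinate of an $(n,d)$-admissible tuple is at most $d$, we have $\sum_{i\in I}n_i \le t d < n$ once $n > td$, where $t = 2^{|\tau|}$. This single bound already ensures that $\overline{n}'$ is again $(n,d)$-admissible, that $k_d \ge 1$, and — using $\sum_{i\in I}n_i = \sum_\ell n_{i_\ell} + k_d d$ together with $m = n - \sum_\ell n_{i_\ell}$, so that $m = n - \sum_{i\in I}n_i + k_d d$ — that $m \ge k_d d + 1$. Hence Lemma~\ref{lemma2} applies with parameters $(m,k_d,d)$ and yields $\stirling{m}{k_d}_{\geq d} \le \frac{k_d^{k_d d+1}}{k_d!}\,\stirling{m-1}{k_d}_{\geq d}$, i.e. the Stirling ratio above is at least $k_d!\,/\,k_d^{k_d d+1} \ge 1/t^{td+1}$. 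Combining this with $n_i+1 \le d-1$ and $m = n-\sum_\ell n_{i_\ell} \ge n - td$ gives
\[
\frac{|M_{\overline{n}'}|}{|M_{\overline{n}}|} \;\ge\; \frac{m}{n_i+1}\cdot\frac{1}{t^{td+1}} \;\ge\; \frac{n-td}{(d-1)\,t^{td+1}},
\]
and the right-hand side exceeds $1$ as soon as $n > td + (d-1)\,t^{td+1}$, which is the claimed strict inequality.

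I do not expect a genuine obstacle here: once Lemmas~\ref{lemma:counting-models} and~\ref{lemma2} are in hand, this is a short computation followed by a crude estimate. The only delicate points are the bookkeeping ones — checking that incrementing a coordinate that is below $d-1$ leaves the combinatorial ``shape'' of the tuple (the set of below-$d$ indices, the number $k_d$, and which points form the saturated part) untouched, so that only the factors indicated above change; and verifying that the lone hypothesis ``$n$ large'' simultaneously delivers admissibility of $\overline{n}'$, the bound $m \ge k_d d+1$ that legitimises the use of Lemma~\ref{lemma2} (and keeps $\stirling{m-1}{k_d}_{\geq d}$ nonzero, so the ratio is meaningful), and the final numerical comparison.
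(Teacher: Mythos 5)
Your proposal is correct and follows essentially the same route as the paper: both reduce the claim via Lemma~\ref{lemma:counting-models} to the single inequality $\stirling{m}{k_d}_{\geq d} < \frac{m}{n_i+1}\stirling{m-1}{k_d}_{\geq d}$ and settle it with Lemma~\ref{lemma2} under the assumption that $n$ is large relative to $d$ and $|\tau|$. Your version merely makes the bookkeeping (admissibility of $\overline{n}'$, $k_d\ge 1$, $m\ge k_d d+1$) and the explicit threshold on $n$ more concrete than the paper does.
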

\begin{proof}
    For notational simplicity we assume that $n_\ell < d$ iff $\ell \leq k$. By Lemma \ref{lemma:counting-models} the inequality that we need to establish is
    \begin{align*}
        &\binom{n}{n_1,\dots,n_k,m} \cdot k_d! \cdot \stirling{m}{k_d}_{\geq d} \\
        &< \binom{n}{n_1,\dots,n_i + 1,\dots,n_k,m - 1} \cdot k_d! \cdot \stirling{m-1}{k_d}_{\geq d},
    \end{align*}
    where $m = n - \sum_{\ell = 1}^k n_\ell$ and $k_d = t - k$. Note that since $n$ is large enough w.r.t. $d$, $k_d \geq 1$. Simplifying this gives us the equivalent inequality
    \[\stirling{m}{k_d}_{\geq d} < \frac{m}{n_i + 1} \stirling{m-1}{k_d}_{\geq d},\]
    which follows from Lemma \ref{lemma2}, as long as
    \[\frac{m}{n_i + 1} > \frac{k_d^{k_d d + 1}}{k_d!} \Leftrightarrow n > (n_i + 1)\frac{k_d^{k_d d + 1}}{k_d!} + \sum_{\ell = 1}^k n_\ell\]
    Using $n_\ell < d$, which holds for every $1 \leq \ell \leq k$, and $1 \leq k_d \leq t$ gives us the desired result.
\end{proof}

\begin{lemma}\label{lemma:one-more-d}
    Fix $d \in \mathbb{Z}_+$ and let $\overline{n}$ be an $(n,d)$-admissible tuple. Suppose that $n$ is sufficiently large with respect to $d$ and $|\tau|$. Then for every $i \in I$ such that $n_i < d$ we have that
    \[|M_{\overline{n}}| < |M_{\overline{n}'}|,\]
    where $n_j' = d$, when $j = i$, and $n_j' = n_j$ otherwise.
\end{lemma}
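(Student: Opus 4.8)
The plan is to compare the two sizes directly through the closed formula of Lemma~\ref{lemma:counting-models}, using the near-tight bounds of Lemma~\ref{lemma1} on the $r$-associated Stirling numbers to show that $|M_{\overline{n}'}|/|M_{\overline{n}}| \to \infty$ as $n \to \infty$.

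Fix $i$ with $n_i < d$. For $n$ large, $\overline{n}$ must already have some coordinate equal to $d$: otherwise admissibility forces $\sum_j n_j = n$, which is impossible since each coordinate is at most $d$, so $\sum_j n_j \leq td < n$. Hence $\overline{n}$ has $k_d \geq 1$ types counted ``$\geq d$'' realized by a total of $m$ points (the quantity $m$ of Lemma~\ref{lemma:counting-models}), and $\overline{n}'$ is obtained by moving type $i$ into that group, so it has $k_d + 1$ such types realized by $m' = m + n_i$ points, with every other coordinate unchanged; $\overline{n}'$ is again admissible for $n$ large. In the formula of Lemma~\ref{lemma:counting-models} all multinomial factors then cancel except the two slots that change, giving
\[
\frac{|M_{\overline{n}'}|}{|M_{\overline{n}}|} \;=\; \frac{n_i!\,m!}{(m+n_i)!}\cdot(k_d+1)\cdot\frac{\stirling{m+n_i}{k_d+1}_{\geq d}}{\stirling{m}{k_d}_{\geq d}}.
\]
The first factor equals $n_i!/\bigl((m+1)\cdots(m+n_i)\bigr) \geq (m+d)^{-d}$ (and is $1$ when $n_i = 0$), so it decays at most polynomially in $m$. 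For the Stirling factor, Lemma~\ref{lemma1} gives $\stirling{m+n_i}{k_d+1}_{\geq d} \geq (k_d+1)^{m+n_i}/(k_d+1)^{(k_d+1)d}$ and $\stirling{m}{k_d}_{\geq d} \leq k_d^m/k_d!$ (both applicable once $m$ is large relative to $d$ and $t$), so the Stirling factor is at least $c(d,|\tau|)\,(1+1/k_d)^m \geq c(d,|\tau|)\,(1+1/t)^m$ for a constant $c(d,|\tau|) > 0$. Hence
\[
\frac{|M_{\overline{n}'}|}{|M_{\overline{n}}|} \;\geq\; c(d,|\tau|)\,\frac{(1+1/t)^m}{(m+d)^d},
\]
and since $m \geq n - td \to \infty$, the exponential numerator overtakes the polynomial denominator once $n$ is large enough in terms of $d$ and $|\tau|$, giving $|M_{\overline{n}}| < |M_{\overline{n}'}|$. (Equivalently, one may first raise $n_i$ up to $d-1$ by iterating Lemma~\ref{lemma:one-more-element}, and then apply the computation above only to the last step $d-1 \to d$.)

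The only non-routine point is this last step: Lemma~\ref{lemma:one-more-element} cannot reach the value $d$, because passing from an exact count to ``at least $d$'' changes the combinatorial shape of the class, moving one type from the exactly-counted multinomial block into the associated-Stirling block (and shifting the parameter $m$). The crux is to verify that the exponential gain $(1+1/k_d)^m$ produced by that extra Stirling block genuinely dominates the polynomial loss $(m+d)^{-d}$ from the multinomial coefficient, uniformly over all admissible $\overline{n}$ and all $1 \leq k_d \leq t$; this is exactly what the sharp lower bound of Lemma~\ref{lemma1} secures.
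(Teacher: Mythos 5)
Your proof is correct and follows essentially the same route as the paper's: both reduce to the ratio of the closed-form expressions from Lemma~\ref{lemma:counting-models}, apply the lower and upper bounds of Lemma~\ref{lemma1} to the two $r$-associated Stirling numbers, and conclude by observing that the resulting factor $((k_d+1)/k_d)^m$ grows exponentially in $m$ while the multinomial loss is only polynomial of degree $n_i < d$. The observation that $k_d \geq 1$ for large $n$ and the final exponential-versus-polynomial comparison match the paper's argument step for step.
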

\begin{proof}
    For notational simplicity we assume that $n_\ell < d$ iff $\ell \leq k$. By Lemma \ref{lemma:counting-models} the inequality that we need to establish is
    \begin{align*}
        &\binom{n}{n_1,\dots,n_i,\dots,n_k,m} \cdot k_d! \cdot \stirling{m}{k_d}_{\geq d} \\
        &< \binom{n}{n_1,\dots,n_{i-1},n_{i+1},\dots,n_k,m + n_i} \\
        & \cdot (k_d + 1)! \cdot \stirling{m + n_i}{k_d + 1}_{\geq d},
    \end{align*}
    where $m = n - \sum_{\ell = 1}^k n_\ell$ and $k_d = t - k$. Note that since $n$ is large enough w.r.t. $d$, $k_d \geq 1$. Simplifying this gives us the equivalent inequality
    \[\stirling{m}{k_d}_{\geq d} < \frac{m!n_i!}{(m + n_i)!} \cdot (k_d + 1) \cdot \stirling{m + n_i}{k_d + 1}_{\geq d}\]
    It follows from Lemma \ref{lemma1} that
    \[\stirling{m + n_i}{k_d + 1}_{\geq d} \geq \frac{(k_d + 1)^{m + n_i}}{(k_d + 1)^{(k_d + 1)d}}\]
    and
    \[\stirling{m}{k_d}_{\geq d} \leq \frac{k_d^m}{k_d!}\]
    Hence we only have to show that
    \[\frac{k_d^m}{k_d!} < \frac{m!n_i!}{(m + n_i)!} \cdot (k_d + 1) \cdot \frac{(k_d + 1)^{m + n_i}}{(k_d + 1)^{(k_d + 1)d}}\]
    or equivalently that
    \begin{align*}
    \frac{(k_d + 1)^{(k_d + 1)d}}{(k_d + 1)^{n_i + 1}k_d!n_i!} 
    &< \frac{m!(k_d + 1)^{m}}{k_d^m(m + n_i)!} \\
    &= \frac{(k_d + 1)^{m}}{k_d^m\prod_{j = 0}^{n_i - 1}(m + n_i - j)} \\
    &= \bigg(\underbrace{\frac{k_d + 1}{k_d}}_{> 1}\bigg)^{m} \bigg/ \prod_{j = 0}^{n_i - 1}(m + n_i - j)
    \end{align*}
    Recall $0 \leq n_i < d$ and $1 \leq k_d \leq t$. Hence in the above inequality the left hand side is a constant, and thus it suffices to show that the right hand side formula tends to infinity as $n$ grows (recall $m = n - \sum_{\ell = 1}^k n_\ell$, i.e., $m$ is just $n$ minus a constant). However, this is clear, since the right hand side is of the form $f(n)/g(n)$, where $f$ grows exponentially w.r.t. $n$ while $g$ grows only polynomially w.r.t. $n$.
\end{proof}

\subsection{Connecting size and description complexity}

Let $P_{n,d}$ denote the set of all $(n,d)$-admissible tuples. We define a natural partial order $\preceq$ on $P_{n,d}$ as follows: $\overline{n} \preceq \overline{n}'$ if and only if $n_i \leq n_i'$, for every $i \in I$. By writing $\overline{n} \prec \overline{n}'$ we mean that $\overline{n} \preceq \overline{n}'$ and $\overline{n} \neq \overline{n}'$.

\begin{lemma}\label{lemma:class-size-monotonicity}
    Suppose that $n$ is sufficiently large with respect to $d$ and $|\tau|$. Let $\overline{n}$ and $\overline{n}'$ be $(n,d)$-admissible tuples such that $\overline{n} \prec \overline{n}'$. Then
    \[|M_{\overline{n}}| < |M_{\overline{n}'}|\]
\end{lemma}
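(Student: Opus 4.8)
The plan is to reduce the general case $\overline{n} \prec \overline{n}'$ to a chain of the elementary steps already handled in Lemma~\ref{lemma:one-more-element} and Lemma~\ref{lemma:one-more-d}. Concretely, if $\overline{n} \prec \overline{n}'$, then $\overline{n}$ and $\overline{n}'$ differ in finitely many coordinates, and in each such coordinate $i$ we have $n_i < n_i'$. The idea is to exhibit a finite sequence of $(n,d)$-admissible tuples
\[
\overline{n} = \overline{m}^{(0)} \prec \overline{m}^{(1)} \prec \dots \prec \overline{m}^{(L)} = \overline{n}'
\]
in which each consecutive pair $\overline{m}^{(\ell)}, \overline{m}^{(\ell+1)}$ differs in exactly one coordinate, and that single difference is either "increase some $n_i < d-1$ by one" (the move of Lemma~\ref{lemma:one-more-element}) or "raise some $n_i < d$ up to $d$" (the move of Lemma~\ref{lemma:one-more-d}). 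Applying the appropriate lemma at each step gives $|M_{\overline{m}^{(\ell)}}| < |M_{\overline{m}^{(\ell+1)}}|$, and chaining these strict inequalities yields $|M_{\overline{n}}| < |M_{\overline{n}'}|$.

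The construction of the chain is the technical heart. For each coordinate $i$ where $n_i < n_i'$, split into cases according to the value $n_i'$. If $n_i' < d$, I can raise coordinate $i$ from $n_i$ to $n_i'$ by $n_i' - n_i$ successive applications of the $+1$ step of Lemma~\ref{lemma:one-more-element}; note that at every intermediate value the coordinate is still $< d$, and in fact $< d-1$ except possibly the final target, but since the final target is $n_i' < d$ we never need a $+1$ step starting from value $d-1$, so Lemma~\ref{lemma:one-more-element} applies at every step. (If $n_i' = d-1$ and $n_i = d-2$, the last step goes $d-2 \to d-1$, which is still legal since the hypothesis of Lemma~\ref{lemma:one-more-element} is that the starting value is $< d-1$, i.e.\ $\leq d-2$.) If instead $n_i' = d$, I first use $+1$ steps of Lemma~\ref{lemma:one-more-element} to bring coordinate $i$ up to $d-1$ (legal as above, starting from $n_i \leq d-2$; if already $n_i = d-1$ this stage is empty), and then a single application of Lemma~\ref{lemma:one-more-d} to jump from $d-1$ to $d$. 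One must check that every intermediate tuple is genuinely $(n,d)$-admissible: the coordinatewise bound $n_i \leq d$ is clear, the sum bound $\sum_i m^{(\ell)}_i \leq n$ holds because each intermediate tuple is coordinatewise below $\overline{n}'$ and $\overline{n}'$ is admissible, and the "either some coordinate equals $d$ or the sum is exactly $n$" clause: here one can simply perform the coordinate-raising in an order that preserves admissibility — for instance, handle a coordinate going up to $d$ first if one exists, otherwise $\overline{n}'$ has full sum $n$ and one can check the partial tuples still have a coordinate hitting $d$ if $\overline{n}$ did, or argue that when no coordinate reaches $d$, both $\overline{n}$ and $\overline{n}'$ have sum $n$ forcing $\overline{n} = \overline{n}'$, a contradiction. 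A cleaner route: since $\overline{n}\prec\overline{n}'$ and both are admissible, $\overline{n}'$ has some coordinate equal to $d$ or sum exactly $n$; in the latter case some coordinate must strictly increase while another stays, but total increases, contradicting $\sum m_i \le n$ — so actually whenever $\overline n\ne\overline n'$ and both admissible with the sums not forced, admissibility of intermediate tuples follows from having a coordinate at $d$, which we can arrange to establish early in the chain.

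The main obstacle I anticipate is precisely this bookkeeping around admissibility of the intermediate tuples and the ordering of the elementary moves — in particular making sure that the "sufficiently large $n$" hypothesis from Lemmas~\ref{lemma:one-more-element} and~\ref{lemma:one-more-d} is invoked uniformly (it depends only on $d$ and $|\tau|$, not on the particular tuple, so a single threshold works for the whole finite chain). Once the chain with one-coordinate-at-a-time transitions is in place and each transition is matched to the correct lemma, the conclusion is immediate by transitivity of $<$. I would write the proof by first reducing to the single-coordinate-difference case via this chaining argument, then dispatching that case to the two prior lemmas.
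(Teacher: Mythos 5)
Your proposal is correct and follows essentially the same route as the paper: reduce $\overline{n} \prec \overline{n}'$ to a chain of single-coordinate increments and dispatch each step to Lemma~\ref{lemma:one-more-element} (when the new value stays below $d$) or Lemma~\ref{lemma:one-more-d} (when it reaches $d$). The admissibility bookkeeping you worry about, which the paper glosses over, resolves exactly as you indicate: $\overline{n} \prec \overline{n}'$ with both tuples admissible forces $\sum_i n_i < n$, hence $\overline{n}$ already has a coordinate equal to $d$, and that coordinate stays at $d$ in every intermediate tuple, so all of them are admissible.
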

\begin{proof}
    It suffices to show that if $\overline{n}'$ is an immediate successor of $\overline{n}$, then $|M_{\overline{n}}| < |M_{\overline{n}'}|$. Now, there must exist exactly one $i \in I$ such that $n_i' = n_i + 1$ and $n_j' = n_j$ for every $j \neq i$. If $n_i' < d$, then the claim follows from Lemma \ref{lemma:one-more-element}, while if $n_i' = d$, then the claim follows from Lemma \ref{lemma:one-more-d}.
\end{proof}

Recall the constant $c_\tau = 2^{|\tau|}(2|\tau| + 1) - 1$ from the previous section. We define yet another partial order $\preceq_{\tau}$ on $P_{n,d}$ as follows: $\overline{n} \preceq_\tau \overline{n}'$ if and only if the following two conditions hold:
\begin{enumerate}
    \item For every $i \in I$ we have that $n_i \leq n_i'$.
    \item Either $\overline{n} = \overline{n}'$ or $\sum_{i \in I} (n_i' - n_i) > c_\tau$.
\end{enumerate}
Roughly speaking $\overline{n} \preceq_\tau \overline{n}'$ means that if the tuples are distinct, then the distance between them w.r.t. to the order $\preceq$ is more than $c_\tau$. Again, by writing $\overline{n} \prec_\tau \overline{n}'$ we mean that $\overline{n} \preceq_\tau \overline{n}'$ and $\overline{n} \neq \overline{n}'$. For example $(1,d) \prec_\tau (2 + c_\tau,d)$, but $(1,d) \not\preceq_\tau (1 + c_\tau,d)$.

\begin{lemma}\label{lemma:description-complexity-monotonicity}
    Suppose that $n$ is sufficiently large with respect to $d$ and $|\tau|$. Let $\overline{n}$ and $\overline{n}'$ be $(n,d)$-admissible tuples such that $\overline{n} \prec_\tau \overline{n}'$. Then we have that
    \[C(M_{\overline{n}}) < C(M_{\overline{n}'})\]
\end{lemma}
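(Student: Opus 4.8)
The plan is to sandwich $C(M_{\overline{n}})$ between a simple ``weight'' $w(\overline{n})$ and $w(\overline{n}) + c_\tau$, and then to check that moving up in $\prec_\tau$ increases this weight by strictly more than $c_\tau$.

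First I would record a consequence of $n$ being large: every $(n,d)$-admissible tuple has at least one coordinate equal to $d$. Indeed, if all coordinates of a tuple $\overline{m}$ were $< d$, admissibility would force $n = \sum_{i \in I} m_i \le t(d-1)$, which is false once $n > t(d-1)$. So every class $M_{\overline{m}}$ falls under one of the two regimes of Theorem \ref{thm:main-descriptive-complexity}, having either exactly one or at least two coordinates equal to $d$. Accordingly I define
\[
w(\overline{m}) :=
\begin{cases}
\sum_{i \in I} m_i & \text{if } \overline{m} \text{ has at least two coordinates equal to } d,\\
n - |\pi_{j}|_{\overline{m}} & \text{if } \overline{m} \text{ has exactly one coordinate equal to } d,
\end{cases}
\]
where in the second case $\pi_j$ is a type realizing the most points in $M_{\overline{m}}$; for $n$ large this is exactly the type whose coordinate equals $d$, since its count $n - \sum_{i \ne j} m_i$ dominates all the others, which are $< d$. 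Combining Theorem \ref{thm:main-descriptive-complexity} for the lower bound with the upper bounds witnessed by the formulas $\varphi(\overline{m})$ and $\varphi'(\overline{m})$ constructed at the beginning of Section \ref{descriptioncomplexitysection}, we get $w(\overline{m}) \le C(M_{\overline{m}}) \le w(\overline{m}) + c_\tau$ for every admissible $\overline{m}$.

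Given this bracketing it suffices to prove $w(\overline{n}') - w(\overline{n}) > c_\tau$, since then $C(M_{\overline{n}}) \le w(\overline{n}) + c_\tau < w(\overline{n}') \le C(M_{\overline{n}'})$. Write $\delta := \sum_{i \in I}(n_i' - n_i)$; the hypothesis $\overline{n} \prec_\tau \overline{n}'$ gives $\delta > c_\tau$. Also $\overline{n} \preceq \overline{n}'$ forces the set of $d$-coordinates to grow, since a coordinate equal to $d$ in $\overline{n}$ is bounded by $d$ in $\overline{n}'$ and hence stays $d$. I would then do the short case split on the number of $d$-coordinates of $\overline{n}$. If $\overline{n}$ has at least two, so does $\overline{n}'$, and $w(\overline{n}') - w(\overline{n}) = \sum_i n_i' - \sum_i n_i = \delta$. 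If $\overline{n}$ has exactly one $d$-coordinate $i^*$, then the non-$d$ types realize exactly $n_i$ points each, so $|\pi_{i^*}|_{\overline{n}} = n - \sum_{i \ne i^*} n_i$ and $w(\overline{n}) = \sum_{i \ne i^*} n_i$; if $\overline{n}'$ also has exactly one $d$-coordinate it is again $i^*$ and $w(\overline{n}') = \sum_{i \ne i^*} n_i'$, whence the difference is $\sum_{i \ne i^*}(n_i' - n_i) = \delta$ (the $i^*$-term is $0$); and if $\overline{n}'$ has at least two $d$-coordinates then $w(\overline{n}') = \sum_i n_i'$, so the difference is $\big(\sum_i n_i + \delta\big) - \big(\sum_i n_i - d\big) = \delta + d \ge \delta$. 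In every case $w(\overline{n}') - w(\overline{n}) \ge \delta > c_\tau$, which finishes the argument.

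The only steps needing care are the two uses of ``$n$ large'': that every admissible tuple has a $d$-coordinate, and that this $d$-coordinate realizes the most points (so that $w$ genuinely brackets $C$); both reduce to the elementary inequality $n > (t+1)(d-1)$. I do not expect a real obstacle here: the substantive work, namely the lower bounds via the cover/formula-size game, was already carried out in Section \ref{descriptioncomplexitysection}, and this lemma is essentially the observation that those bounds are tight enough that a gap of more than $c_\tau$ in the underlying weight cannot be absorbed by the slack $c_\tau$.
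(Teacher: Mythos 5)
Your proof is correct and takes essentially the same route as the paper's: both combine the lower bounds of Theorem \ref{thm:main-descriptive-complexity} with the upper bounds given by the explicit formulas $\varphi(\overline{n})$ and $\varphi'(\overline{n})$ from the start of Section \ref{descriptioncomplexitysection}, and then case-split on how many coordinates equal $d$ in each tuple, using that a $d$-coordinate of $\overline{n}$ persists in $\overline{n}'$ so the relevant increment still exceeds $c_\tau$. Your repackaging via the weight $w$, and your explicit verification that for large $n$ every admissible tuple has a $d$-coordinate which is the most-realized type, merely make precise steps the paper leaves implicit.
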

\begin{proof}
    Suppose first that $d$ occurs at least twice in $\overline{n}'$. Theorem \ref{thm:main-descriptive-complexity} entails that $C(M_{\overline{n}'}) \geq \sum_{i\in I} n_i'$. We now have two cases based on whether or not $d$ occurs at least twice in $\overline{n}$. Suppose first that $d$ occurs exactly once in $\overline{n}$ say, $n_j = d$. Then $M_{\overline{n}}$ can be defined by a formula of size 
    \begin{align*}
        c_\tau + \sum_{i \in I - \{j\}} n_i
        < \sum_{i \in I - \{j\}} n_i' \leq C(M_{\overline{n}'})
    \end{align*}
    and hence $C(M_{\overline{n}}) < C(M_{\overline{n}'})$. On the other hand, if $d$ occurs at least twice in $\overline{n}$, then $M_{\overline{n}}$ can be defined by a formula of size
    \begin{align*}
        c_\tau + \sum_{i \in I} n_i 
        < \sum_{i \in I} n_i' \leq C(M_{\overline{n}'})
    \end{align*}
    and hence $C(M_{\overline{n}}) < C(M_{\overline{n}'})$.

    Suppose then that $d$ occurs exactly once in $\overline{n}'$, say $n_j' = d$. Theorem \ref{thm:main-descriptive-complexity} entails that $C(M_{\overline{n}'}) \geq \sum_{i \in I - \{j\}} n_i'$. Since $\overline{n} \prec_\tau \overline{n}'$, we have that $n_i < d$, for every $i \neq j$. Since $d$ must occur at least once in $\overline{n}$ --- as $\overline{n}$ is $(n,d)$-admissible --- we have that $n_j = d$. Now $M_{\overline{n}}$ can be defined by a formula of size 
    \begin{align*}
        c_\tau + \sum_{i \in I - \{j\}} n_i
        < \sum_{i \in I - \{j\}} n_i'
        \leq C(M_{\overline{n}'})
    \end{align*}
    and hence $C(M_{\overline{n}}) < C(M_{\overline{n}'})$.
\end{proof}

Recall the partial orderings $\leq_s$ and $\leq_c$ on $P_{n,d}$, which were introduced at the beginning of this section. The following theorem formalizes a connection between sizes of model classes and their description complexities.

\begin{theorem}\label{thm:connecting-size-and-complexity}
    If $n$ is sufficiently large with respect to $d$, then
    \[\preceq_\tau \ \subseteq \ \leq_s \cap \leq_c.\]
    In particular, if $\overline{n}$ and $\overline{n}'$ are two distinct $\preceq_\tau$-comparable tuples, then
    \[|M_{\overline{n}}| < |M_{\overline{n}'}| \Leftrightarrow C(M_{\overline{n}}) < C(M_{\overline{n}'}).\]

    
\end{theorem}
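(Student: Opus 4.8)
The plan is to derive Theorem \ref{thm:connecting-size-and-complexity} essentially as a formal bookkeeping corollary of the two monotonicity lemmas just proved. The first containment, $\preceq_\tau \subseteq \leq_s$, is almost immediate: if $\overline{n} \prec_\tau \overline{n}'$ then in particular $\overline{n} \prec \overline{n}'$ (condition (1) of $\preceq_\tau$ plus distinctness), so Lemma \ref{lemma:class-size-monotonicity} gives $|M_{\overline{n}}| < |M_{\overline{n}'}|$, i.e.\ $\overline{n} <_s \overline{n}'$; and reflexive pairs $\overline{n} = \overline{n}'$ are handled by the loops in $\leq_s$. The second containment, $\preceq_\tau \subseteq \leq_c$, is exactly Lemma \ref{lemma:description-complexity-monotonicity} (again with reflexive pairs absorbed by $\leq_c$). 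So $\preceq_\tau \subseteq \leq_s \cap \leq_c$.

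For the ``in particular'' clause, I would argue as follows. Fix two distinct $\preceq_\tau$-comparable tuples $\overline{n}, \overline{n}'$; by symmetry of the claim we may assume $\overline{n} \prec_\tau \overline{n}'$. By the containment just established, $\overline{n} \prec_\tau \overline{n}'$ forces \emph{both} $|M_{\overline{n}}| < |M_{\overline{n}'}|$ and $C(M_{\overline{n}}) < C(M_{\overline{n}'})$ simultaneously. Hence both sides of the biconditional $|M_{\overline{n}}| < |M_{\overline{n}'}| \Leftrightarrow C(M_{\overline{n}}) < C(M_{\overline{n}'})$ are true, so the biconditional holds. (If instead $\overline{n}' \prec_\tau \overline{n}$, both sides are false, and the biconditional again holds.) Note that comparability of distinct tuples under $\preceq_\tau$ is needed precisely so that one of these two cases applies.

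One subtlety worth flagging explicitly: the statement ``$n$ sufficiently large with respect to $d$'' must be interpreted uniformly, i.e.\ we take the maximum of the thresholds required by Lemma \ref{lemma:class-size-monotonicity} and Lemma \ref{lemma:description-complexity-monotonicity} (each of which in turn is a finite maximum over the finitely many shapes of admissible tuples, since $|\tau|$ and $d$ are fixed and the bounds in Lemmas \ref{lemma:one-more-element} and \ref{lemma:one-more-d} are uniform in the remaining data). Under that common threshold the proof goes through verbatim.

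There is no real obstacle here — the content of the theorem has already been discharged by the preceding lemmas, and the remaining work is purely logical combination. If anything, the ``hard part'' is only the rhetorical one of making clear that the displayed biconditional is \emph{not} a separate claim requiring a converse argument: it follows because $\preceq_\tau$-comparability collapses the two potential directions into the single implication $\overline{n} \prec_\tau \overline{n}' \Rightarrow (|M_{\overline{n}}| < |M_{\overline{n}'}|$ and $C(M_{\overline{n}}) < C(M_{\overline{n}'}))$, together with its mirror image.
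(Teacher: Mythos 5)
Your proposal is correct and follows essentially the same route as the paper: both derive the containment directly from Lemmas \ref{lemma:class-size-monotonicity} and \ref{lemma:description-complexity-monotonicity} and then read off the biconditional from the fact that $\prec_\tau$-comparability forces both inequalities simultaneously (or both of their negations). Your extra remarks on the uniform threshold for $n$ and on why no separate converse argument is needed are sound elaborations of what the paper leaves implicit.
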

\begin{proof}
    Suppose that $\overline{n} \prec_\tau \overline{n}'$. Lemmas \ref{lemma:class-size-monotonicity} and \ref{lemma:description-complexity-monotonicity} guarantee that if $n$ is large enough w.r.t. $d$, then $|M_{\overline{n}}| < |M_{\overline{n}'}|$ and $C(M_{\overline{n}}) < C(M_{\overline{n}'})$. Hence $\overline{n} \leq_s \overline{n}'$ and $\overline{n} \leq_c \overline{n}'$, which proves the first claim. The second claim follows directly from the first.
\end{proof}

The intuitive content of the above theorem is that the partial order $\preceq_\tau$ approximates both $\leq_s$ and $\leq_c$. Hence $\preceq_\tau$ can be viewed as a highly non-trivial monotone connection between $\leq_s$ and $\leq_c$.

We note that the above theorem also works for Boltzmann entropy. This is because the order $\leq_s$ is the same as the order based on Boltzmann entropy, since logarithm is an increasing function. We formulate the latter of the two claims as a corollary.

\begin{corollary}
   Assume $n$ is sufficiently large with respect to $d$. If $\overline{n}$ and $\overline{n}'$ are two distinct $\preceq_\tau$-comparable tuples, then
    \[
    H_B(M_{\overline{n}}) < H_B(M_{\overline{n}'}) \Leftrightarrow C(M_{\overline{n}}) < C(M_{\overline{n}'}).
    \]
\end{corollary}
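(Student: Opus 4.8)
The plan is to reduce the statement to Theorem \ref{thm:connecting-size-and-complexity} via the monotonicity of the logarithm. First I would recall that by definition $H_B(M) = \log(|M|)$ for every model class $M$, and that $\log$ is strictly increasing on the positive reals. Since $M_{\overline{n}}$ and $M_{\overline{n}'}$ are both nonempty (their sizes are positive integers), it follows immediately that
\[H_B(M_{\overline{n}}) < H_B(M_{\overline{n}'}) \iff |M_{\overline{n}}| < |M_{\overline{n}'}|.\]

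Next, under the hypotheses of the corollary — namely that $\overline{n}$ and $\overline{n}'$ are distinct, $\preceq_\tau$-comparable, and that $n$ is sufficiently large with respect to $d$ — Theorem \ref{thm:connecting-size-and-complexity} yields
\[|M_{\overline{n}}| < |M_{\overline{n}'}| \iff C(M_{\overline{n}}) < C(M_{\overline{n}'}).\]
Chaining the two equivalences gives exactly the asserted equivalence between $H_B(M_{\overline{n}}) < H_B(M_{\overline{n}'})$ and $C(M_{\overline{n}}) < C(M_{\overline{n}'})$.

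There is essentially no obstacle: the substantive content is already contained in Theorem \ref{thm:connecting-size-and-complexity}, and the only extra ingredient is the order-preserving property of $\log$, which is the same observation already recorded in the remark immediately preceding the corollary (that $\leq_s$ coincides with the order induced by Boltzmann entropy). So the proof is a two-line deduction, and I would present it as such rather than reproving anything.
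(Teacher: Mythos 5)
Your proof is correct and matches the paper's own reasoning exactly: the paper likewise derives the corollary from Theorem \ref{thm:connecting-size-and-complexity} by observing that $H_B(M) = \log(|M|)$ and that the logarithm is increasing, so the size order coincides with the Boltzmann-entropy order. Nothing further is needed.
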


\section{The phase transitions of class size distributions}\label{phase}

In this section we move our attention from single classes to the entire probability distribution given by $\equiv_d$. By allowing $d$ to depend on $n$, we obtain qualitative results which link the growth rate of $d$ with the emergence of a dominating class, i.e., a class which contains almost all the models. For fixed $n$, we also obtain quantitative results on how the relationship between $d$ and $n$ determines whether there exists a class in $\equiv_d$ which contains majority of all the models of size $n$. We will also point out consequences of these results on the ``average-case'' expressive power of $\gmlu_d$. 

Throughout this section we continue to use our previous convention that $t = 2^{|\tau|}$ denotes the number of types $\pi$ of the alphabet $\tau$. 
We start with the following observation, which gives a sense of what happens in the distribution, when the counting depth is increased.
\begin{proposition}\label{prop:shannon-boltzmann-d}
    Suppose that $d < d' \leq n/2$. Then
    \[H_S(\equiv_d) < H_S(\equiv_{d'}) \text{ and } H_B(\equiv_{d}) >H_B(\equiv_{d'}).\]
%
%
%
    Furthermore, for every $n/2 \leq d \leq d'$ we have that
    \[H_S(\equiv_d) = H_S(\equiv_{d'}) \text{ and } H_B(\equiv_{d}) = H_B(\equiv_{d'}).\]
    %
%
%
%
%
%
%
%
\end{proposition}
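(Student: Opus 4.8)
The plan is to reduce everything to a single clean comparison between the distributions $\equiv_d$ and $\equiv_{d'}$, and then invoke Proposition~\ref{prop:shannon_boltzmann} to get the Boltzmann statement for free. First I would observe the following refinement fact: whenever $d \le d'$, the partition induced by $\equiv_{d'}$ refines the one induced by $\equiv_d$, because the $(n,d')$-admissible tuple of a model determines its $(n,d)$-admissible tuple (just cap each coordinate at $d$). Hence $\equiv_{d'}$ is obtained from $\equiv_d$ by splitting some classes. Shannon entropy is nondecreasing under refinement of a partition of a probability space (this is the standard ``conditioning reduces entropy'' / grouping inequality), so $H_S(\equiv_d) \le H_S(\equiv_{d'})$ always holds; the content is in deciding when the inequality is strict versus an equality, which is exactly governed by whether the refinement actually splits a class of positive probability nontrivially.

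For the equality regime $n/2 \le d \le d'$: here I would argue that $\equiv_d$ and $\equiv_{d'}$ are literally the \emph{same} partition. Indeed, if $d \ge n/2$ then at most one type can be realized $d$ or more times in a model of size $n$ (two types each realized $\ge d \ge n/2$ times would already force $\ge n$ points, and if it is exactly $n$ then the tuple is fully determined anyway, with no coordinate needing to exceed the cap ambiguously). So every model's type-count vector $(n_i)_{i\in I}$ has at most one coordinate $\ge d$, and capping at $d$ versus capping at $d'$ loses no information: the map from models to $(n,d)$-admissible tuples and the map to $(n,d')$-admissible tuples induce the same equivalence relation. Therefore $H_S(\equiv_d) = H_S(\equiv_{d'})$, and then Proposition~\ref{prop:shannon_boltzmann} gives $H_B(\equiv_d) = |\tau|n - H_S(\equiv_d) = |\tau|n - H_S(\equiv_{d'}) = H_B(\equiv_{d'})$.

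For the strict regime $d < d' \le n/2$: by the refinement observation and the grouping inequality, it is enough to exhibit one class of $\equiv_d$ that genuinely splits into at least two classes of $\equiv_{d'}$ of positive probability, i.e.\ to find two models of size $n$ that are $\equiv_d$-equivalent but not $\equiv_{d'}$-equivalent. Since $d' \le n/2$, I can construct, say, a model with exactly $d$ points of type $\pi_1$ and the rest of type $\pi_2$, versus a model with exactly $d+1$ points of type $\pi_1$ and the rest of type $\pi_2$ (both well-defined since $d+1 \le n/2 \le n$, and $|\tau|\ge 1$ so there are at least two types). These agree on all $\gmlu_d$ formulas (both have $\ge d$ points of type $\pi_1$) but differ on $\di^{=d}\psi(\pi_1)$, a $\gmlu_{d'}$ formula since $d \le d' - 1$... actually one must be a touch careful here since $\depth(\di^{=d}\psi) = d+1$; instead use that the $\equiv_{d+1}$-class splits, or directly note $\di^{\ge d+1}\psi(\pi_1)$ has counting depth $d+1 \le d'$ and separates them. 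Once strictness of $H_S$ is in hand, Proposition~\ref{prop:shannon_boltzmann} immediately flips it: $H_B(\equiv_d) = |\tau|n - H_S(\equiv_d) > |\tau|n - H_S(\equiv_{d'}) = H_B(\equiv_{d'})$.

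The main obstacle I anticipate is making the ``refinement $\Rightarrow$ strict entropy increase'' step fully rigorous: refining a partition does not \emph{always} strictly increase Shannon entropy (splitting a probability-zero class, or a ``split'' that is trivial, changes nothing), so I need the grouping/chain-rule identity $H_S(\equiv_{d'}) = H_S(\equiv_d) + \sum_{M} p_{\equiv_d}(M)\, H_S(\equiv_{d'}\!\restriction M)$ and then the explicit witness above showing that for at least one class $M$ with $p_{\equiv_d}(M) > 0$ the conditional entropy term $H_S(\equiv_{d'}\!\restriction M)$ is strictly positive. The inequality $d' \le n/2$ is precisely what guarantees such a witness class exists with more than one $\equiv_{d'}$-subclass of positive probability, so the proof should pinpoint exactly where that hypothesis is used.
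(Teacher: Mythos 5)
Your proof is correct and follows essentially the same route as the paper: both reduce to a single entropy via Proposition~\ref{prop:shannon_boltzmann}, both note that $\equiv_d$ already coincides with the isomorphism partition once $d \geq n/2$, and both handle $d < d' \leq n/2$ by exhibiting an $\equiv_d$-class that splits nontrivially under the refinement $\equiv_{d'}$. The only cosmetic difference is that the paper argues directly on Boltzmann entropy (no class grows and the witness class $M_{(0,\dots,0,d,d)}$ strictly shrinks), which sidesteps the Shannon grouping identity you invoke.
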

\begin{proof}
    By Proposition \ref{prop:shannon_boltzmann} $H_S(\equiv_{d}) + H_B(\equiv_{d}) = |\tau|n$. Hence it suffices to establish the claims in the case of Boltzmann entropy. We first note that if $n/2 \leq d \leq d'$, then $H_B(\equiv_{d}) = H_B(\equiv_{d'})$, since the logic $\gmlu_d$ can already specify each structure up to isomorphism. 
    
    Suppose then that $d < d' \leq n/2$. Consider the class $M_{\overline{n}}$, where $\overline{n} = (0, \dots, 0, d, d)$. As the depth is increased to $d' > d$, this class is divided to at least two smaller classes with tuples $(0, \dots, 0, d, d')$ and $(0, \dots, 0, d', d)$. Meanwhile, clearly no class increases in size so we see that $H_B(\equiv_{d}) > H_B(\equiv_{d'})$.
\end{proof}

We take this opportunity to point out an easy corollary of the previous result. In \cite{stacsarxiv} it was established that $H_B(\equiv_n) \sim |\tau|n$, by which we mean that $\lim_{n \to \infty} H_B(\equiv_n)/|\tau|n = 1$. When combined with Proposition \ref{prop:shannon-boltzmann-d}, this result yields quite directly the following.

\begin{corollary}
    For any counting depth $d(n)$, we have 
    \[H_B(\equiv_{d(n)}) \sim |\tau|n\]
\end{corollary}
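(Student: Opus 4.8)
The plan is to combine the two ingredients the corollary explicitly advertises: the known asymptotic $H_B(\equiv_n) \sim |\tau|n$ from \cite{stacsarxiv}, and the monotonicity in the counting depth established in Proposition \ref{prop:shannon-boltzmann-d}. The key observation is that for \emph{any} function $d(n)$ we automatically have $1 \leq d(n)$ in the relevant range, and also $d(n) \leq n$ is the largest depth that can do anything new, so the value of $H_B(\equiv_{d(n)})$ is sandwiched between $H_B(\equiv_1)$ and $H_B(\equiv_n)$. Concretely, Proposition \ref{prop:shannon-boltzmann-d} gives $H_B(\equiv_{d'}) \leq H_B(\equiv_d)$ whenever $d \leq d'$ (the strict inequalities there are only claimed below $n/2$, but the non-strict version holds throughout, and once $d \geq n/2$ the Boltzmann entropy stabilizes). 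Hence for every $n$,
\[
H_B(\equiv_n) \ \leq \ H_B(\equiv_{d(n)}) \ \leq \ H_B(\equiv_1).
\]

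First I would record the lower bound. Dividing through by $|\tau|n$ gives
\[
\frac{H_B(\equiv_n)}{|\tau|n} \ \leq \ \frac{H_B(\equiv_{d(n)})}{|\tau|n},
\]
and since $H_B(\equiv_n)/|\tau|n \to 1$ by \cite{stacsarxiv}, the left side tends to $1$, so $\liminf_n H_B(\equiv_{d(n)})/|\tau|n \geq 1$. Next I would handle the upper bound. By Proposition \ref{prop:shannon_boltzmann} we have $H_B(\equiv_{d(n)}) = |\tau|n - H_S(\equiv_{d(n)})$, and Shannon entropy is always nonnegative, so $H_B(\equiv_{d(n)}) \leq |\tau|n$, giving $\limsup_n H_B(\equiv_{d(n)})/|\tau|n \leq 1$ trivially. (Alternatively one can use $H_B(\equiv_{d(n)}) \leq H_B(\equiv_1)$ and note this is $O(n)$, but the Shannon route is cleaner since it yields exactly $|\tau|n$ as the ceiling.) Combining the two bounds yields $\lim_n H_B(\equiv_{d(n)})/|\tau|n = 1$, which is precisely $H_B(\equiv_{d(n)}) \sim |\tau|n$.

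There is essentially no obstacle here: the corollary is a genuine squeeze argument, with the nontrivial content entirely contained in the already-cited fact $H_B(\equiv_n) \sim |\tau|n$ and in the monotonicity of Proposition \ref{prop:shannon-boltzmann-d}. The only point requiring a sentence of care is making sure the monotonicity statement is applied in the correct direction: $H_B$ is \emph{decreasing} in $d$ (more expressive logics split classes into smaller pieces, lowering the expected log-size), so $d(n) \leq n$ gives $H_B(\equiv_{d(n)}) \geq H_B(\equiv_n)$, which is the inequality we need for the lower bound. One should also note that $d(n)$ ranging outside $[1,n]$ is harmless: values below $1$ are not meaningful counting depths, and for $d(n) \geq n$ we have $\equiv_{d(n)} \,=\, \equiv_n$ by the isomorphism-completeness remark in the proof of Proposition \ref{prop:shannon-boltzmann-d}, so the bound $H_B(\equiv_{d(n)}) = H_B(\equiv_n) \sim |\tau|n$ holds directly in that case.
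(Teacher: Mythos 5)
Your proposal is correct and follows essentially the same route as the paper: the upper bound $H_B(\equiv_{d(n)}) \leq |\tau|n$ (the paper gets it from $\log|M| \leq |\tau|n$, you from $H_S \geq 0$ plus Proposition \ref{prop:shannon_boltzmann}, which is the same fact), and the lower bound from the monotonicity $H_B(\equiv_{d(n)}) \geq H_B(\equiv_n)$ together with $H_B(\equiv_n) \sim |\tau|n$. The squeeze is exactly the paper's argument, so there is nothing to add.
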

\begin{proof}
    W.l.o.g. we assume that $d(n) \leq n$, for every $n$. Since $\log(|M|) \leq |\tau|n$, for any class $M$, we have that $H_B(\equiv_{d(n)}) \leq |\tau|n$. Since $H_B(\equiv_n) \sim |\tau|n$, for every $\varepsilon > 0$ we have that if $n$ is large enough, then $H_B(\equiv_n) \geq (1 - \varepsilon)|\tau|n$. Since $H_B(\equiv_{d(n)}) \geq H_B(\equiv_n)$, for every $\varepsilon > 0$ we have that $H_B(\equiv_{d(n)}) \geq (1 - \varepsilon)|\tau|n$, provided that $n$ is sufficiently large. Combining these bounds yields the desired result.
\end{proof}

Thus, from an asymptotic point of view, the dependence of the counting depth $d$ on $n$ has no effect on the Boltzmann entropy of $\equiv_d$. By virtue of Proposition \ref{prop:shannon_boltzmann}, the same is true for the Shannon entropy of $\equiv_d$.
We proceed now to further analyze the effect of counting depth on the distribution. To formulate some of our results, we will use standard asymptotic notation, which we recall here. Let $f,g:\mathbb{N} \to \mathbb{R}_{> 0}$. We use $f = o(g)$ to denote that $\lim_{n \to \infty} f(n) / g(n) = 0$. Furthermore, we use $f = \omega(g)$ to denote that $\lim_{n \to \infty} f(n)/g(n) = \infty$.

We first show that if the counting depth grows slowly enough with respect to $n$, then the distribution contains a class which contains almost all of the models of size $n$. 
More formally, fix a function $d:\mathbb{Z}_+\rightarrow\mathbb{N}$ and thereby a sequence $\equiv_{d(n)}$ of equivalence relations. A \textbf{class sequence} $M(n)$ with respect to the sequence $\equiv_{d(n)}$ is a function that outputs a single class for each individual equivalence relation in the sequence. Each class sequence $M(n)$ is naturally associated with the corresponding \textbf{probability sequence} 
\[p_n := p_{\equiv_{d(n)}}(M(n)) = \frac{|M(n)|}{2^{n|\tau|}}.\]
We say that $\equiv_{d(n)}$ has a \textbf{dominating class} if there exists a class sequence $M(n)$ such that $p_n\rightarrow 1$ as 
$n\rightarrow \infty$.
Then the class sequence $M(n)$ is said to \textbf{dominate} $\equiv_{d(n)}$. 
Intuitively, a class (sequence) is dominating if a random model of size $n$ belongs to it with limit probability one. 

Our proof uses the well-known Chernoff bounds. The following lemma will require the lower-tail estimate, while the upper-tail estimate will be used later in this section.

\begin{proposition}[Chernoff bounds \cite{probandcomputing}]
    Let $X := \sum_{i = 1}^n X_i$ be a sum of independent $0$-$1$-valued random variables, where $X_i = 1$ with probability $p$ and $X_i = 0$ with probability $1 - p$. Then for every $\delta \geq 0$ we have that
    \begin{align*}
        & \text{\textbf{(Lower tail)} } \Pr[X \leq (1 - \delta)np] \leq e^{-\delta^2\frac{np}{2}}\\
        & \text{\textbf{(Upper tail)} } \Pr[X \geq (1 + \delta)np] \leq e^{-\delta^2\frac{np}{2 + \delta}}
    \end{align*}
\end{proposition}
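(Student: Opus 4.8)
The statement is the classical Chernoff bound, so I would prove it by the standard exponential moment (Bernstein) method, handling the two tails in parallel. For the upper tail, fix $t > 0$; since $x \mapsto e^{tx}$ is increasing, Markov's inequality gives $\Pr[X \geq (1+\delta)np] = \Pr[e^{tX} \geq e^{t(1+\delta)np}] \leq e^{-t(1+\delta)np}\,\mathbb{E}[e^{tX}]$. For the lower tail, the same reasoning applied to $-X$ (again with $t > 0$) gives $\Pr[X \leq (1-\delta)np] \leq e^{t(1-\delta)np}\,\mathbb{E}[e^{-tX}]$. So everything reduces to estimating one moment generating function and then optimizing the free parameter $t$.

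Next I would compute the moment generating function using independence: $\mathbb{E}[e^{sX}] = \prod_{i=1}^n \mathbb{E}[e^{sX_i}] = (1 - p + pe^s)^n = (1 + p(e^s-1))^n$, and the elementary bound $1 + y \leq e^y$ yields $\mathbb{E}[e^{sX}] \leq \exp\!\big(np(e^s - 1)\big)$. Substituting $s = t$ into the upper-tail estimate and $s = -t$ into the lower-tail estimate and minimizing the resulting exponents over $t > 0$ --- the optimal choices being $t = \ln(1+\delta)$ and $t = -\ln(1-\delta)$ respectively (the latter needs $\delta < 1$; for $\delta > 1$ the lower-tail claim is trivial since $(1-\delta)np < 0 \leq X$, and for $\delta = 1$ it reads $\Pr[X = 0] = (1-p)^n \leq e^{-np} \leq e^{-np/2}$) --- produces the ``textbook'' forms
\[
\Pr[X \geq (1+\delta)np] \leq \left(\frac{e^{\delta}}{(1+\delta)^{1+\delta}}\right)^{\!np}, \qquad \Pr[X \leq (1-\delta)np] \leq \left(\frac{e^{-\delta}}{(1-\delta)^{1-\delta}}\right)^{\!np}.
\]

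To reach the exact shapes in the statement, the final step is to bound the exponents by the quoted Gaussian expressions, i.e.\ to prove $\delta - (1+\delta)\ln(1+\delta) \leq -\delta^2/(2+\delta)$ for $\delta \geq 0$ and $-\delta - (1-\delta)\ln(1-\delta) \leq -\delta^2/2$ for $0 \leq \delta < 1$. The lower-tail inequality is painless: expanding $(1-\delta)\ln(1-\delta) = -\delta + \sum_{k \geq 2}\frac{\delta^k}{k(k-1)}$ shows its left-hand side equals $-\sum_{k\geq 2}\frac{\delta^k}{k(k-1)} \leq -\delta^2/2$. The upper-tail inequality is the only mildly delicate point: I would set $g(\delta) := \frac{\delta^2}{2+\delta} + \delta - (1+\delta)\ln(1+\delta)$, check $g(0) = g'(0) = 0$, and verify $g''(\delta) = \frac{8}{(2+\delta)^3} - \frac{1}{1+\delta} \leq 0$ (equivalently $8(1+\delta) \leq (2+\delta)^3$, clear since $(2+\delta)^3 - 8(1+\delta) = 4\delta + 6\delta^2 + \delta^3 \geq 0$), whence $g \leq 0$ on $[0,\infty)$. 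There is essentially no obstacle here: this is a quoted classical fact from \cite{probandcomputing}, and the only real work is the constant chase in that last convexity estimate; the rest is the routine Chernoff recipe of optimizing a single exponential parameter.
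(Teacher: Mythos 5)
Your proof is correct: the Markov--moment-generating-function argument, the choice of the optimizing parameters $t=\ln(1+\delta)$ and $t=-\ln(1-\delta)$, the series expansion giving $-\delta-(1-\delta)\ln(1-\delta)=-\sum_{k\geq 2}\frac{\delta^k}{k(k-1)}\leq -\delta^2/2$, and the concavity check $8(1+\delta)\leq(2+\delta)^3$ for the upper tail all go through, and you correctly dispose of the edge cases $\delta\geq 1$ for the lower tail. There is nothing to compare against, however: the paper does not prove this proposition at all --- it is quoted verbatim from \cite{probandcomputing} as a known tool, so your self-contained derivation (which is the standard one found in that reference) simply supplies what the paper deliberately omits.
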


\begin{lemma}\label{lemma:dominating-class}
    For any $f(n) = \omega(\sqrt{n})$, if the counting depth is $d(n) \leq n/t - f(n)$, then the class sequence $M_{\overline{n}}$, where $\overline{n} = (d(n), \dots, d(n))$, dominates $\equiv_{d(n)}$.
\end{lemma}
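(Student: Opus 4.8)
The plan is to identify a uniformly random model of size $n$ with $n$ independent draws from the uniform distribution on the $t$ propositional types, one draw per point of $W$; under this identification the uniform distribution on $\mathcal{M}$ is recovered, and a model $\MM$ belongs to $M_{\overline{n}}$ with $\overline{n} = (d(n),\dots,d(n))$ precisely when every type is realized at least $d(n)$ times. So it suffices to show that, with probability tending to $1$, each of the $t$ types occurs at least $d(n)$ times among the $n$ points. (One should first record that $\overline{n}$ is genuinely $(n,d(n))$-admissible for large $n$, which is immediate since $t\,d(n) \le n - t f(n) \le n$.)

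For a fixed type $\pi_i$, let $X_i$ denote the number of points of type $\pi_i$ in the random model. Then $X_i$ is a sum of $n$ independent $0$-$1$ random variables, each equal to $1$ with probability $1/t$, so $\mathbb{E}[X_i] = n/t$. I would then apply the lower-tail Chernoff bound stated above with deviation parameter $\delta := t f(n)/n \in [0,1]$, chosen precisely so that $(1-\delta)\,\mathbb{E}[X_i] = n/t - f(n) \ge d(n)$. This yields
\[
\Pr[X_i < d(n)] \;\le\; \Pr\!\left[X_i \le (1-\delta)\tfrac{n}{t}\right] \;\le\; \exp\!\left(-\delta^2 \tfrac{n}{2t}\right) \;=\; \exp\!\left(-\tfrac{t\,f(n)^2}{2n}\right).
\]

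A union bound over the $t$ types then gives $\Pr[\MM \notin M_{\overline{n}}] \le t\exp\!\left(-t f(n)^2/(2n)\right)$, and since $f(n) = \omega(\sqrt{n})$ the exponent $t f(n)^2/(2n)$ tends to infinity, so this probability tends to $0$ and $p_n \to 1$; that is, the class sequence $M_{\overline{n}}$ dominates $\equiv_{d(n)}$. There is no real obstacle in this argument: the only points deserving attention are tuning $\delta$ so that the Chernoff exponent comes out as exactly $t f(n)^2/(2n)$, and observing that the hypothesis $f = \omega(\sqrt{n})$ is precisely the threshold that forces this exponent to diverge — for a function $f$ of smaller order the exponent would stay bounded (or vanish), and this union-bound argument would yield nothing.
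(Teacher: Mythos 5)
Your proof is correct and follows essentially the same route as the paper: model each of the $n$ points as an independent uniform draw over the $t$ types, apply the lower-tail Chernoff bound to the count of each type, and finish with a union bound. The only difference is cosmetic --- you pick $\delta = tf(n)/n$ directly, whereas the paper routes the same choice through an auxiliary function $g(n)=\omega(1)$.
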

\begin{proof}
    We will show that for each $f(n)$, such that $f(n) = \omega(\sqrt{n})$ and $f(n) \leq n/t$, we have that with limit probability one a random model realizes each type more than $n/t - f(n)$ times. Given a type $\pi$, we let $X_\pi$ denote a random variable which counts the number of times $\pi$ is realized. Now $X_\pi := \sum_{1 \leq i \leq n} X_{\pi,i}$, where $X_{\pi,i}$ is an indicator random variable for the event that the element $i$ realizes the type $\pi$. Since the success probability of $X_{\pi,i}$ is $t^{-1}$, we have that $\mathrm{E}(X_\pi) = n/t$.
    
    Now, Chernoff bound give us that for every $n$ and for every $0 \leq \delta$ we have
    \[\Pr\bigg[X_\pi \leq (1 - \delta) \frac{n}{t}\bigg] \leq e^{-\delta^2 \frac{n}{2t}}.\]
    Note that $\delta$ can indeed depend on $n$. Setting $\delta(n) := \sqrt{2tg(n)/n}$, for any $g(n) = \omega(1)$, we obtain that
    \[e^{-\delta^2 \frac{n}{2t}} = e^{-g(n)} \to 0.\]
    Furthermore
    \[\delta(n) \frac{n}{t} = \underbrace{\sqrt{2t} \cdot \frac{1}{t}}_{=: C} \cdot \sqrt{g(n)} \cdot \sqrt{n} = C \sqrt{ng(n)}.\]
    Hence for any function $g(n) = \omega(1)$ we have that with high probability the type $\pi$ is realized more than $n/t - C \sqrt{n g(n)}$ many times. Since $\pi$ was arbitrary, it follows from the union bound that with high probability every type $\pi$ is realized more than $n/t - C \sqrt{n g(n)}$ times. Now, if $f(n) = \omega(\sqrt{n})$, then by setting $g(n) = (f(n)/C \sqrt{n})^2$ we obtain that every type is realized more than $n/t - f(n)$ times.
\end{proof}

We showed that when the counting depth is low enough, the distribution has a dominating class. Next we will show that for a high enough counting depth, there is no dominating class. For this, we will utilize the following inequality version of Stirling's approximation, due to Robbins \cite{stirlingappr}.

\begin{proposition}[Stirling's approximation \cite{stirlingappr}]
    For all $n \in \Nset$, $n > 0$, we have
    \[
    \sqrt{2\pi n} \Big(\frac{n}{e}\Big)^n e^{\frac{1}{12n+1}} < n! < \sqrt{2\pi n} \Big(\frac{n}{e}\Big)^n e^{\frac{1}{12n}}
    \]
\end{proposition}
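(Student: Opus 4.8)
The plan is to follow the classical refinement of Stirling's formula due to Robbins. I would first set
\[
d_n := \ln(n!) - \Big(n + \tfrac12\Big)\ln n + n,
\]
so that $n! = e^{d_n}\,\sqrt{n}\,(n/e)^n$ and the whole proposition reduces to sandwiching $d_n$ between $L + \tfrac{1}{12n+1}$ and $L + \tfrac{1}{12n}$, where $L := \lim_n d_n$, followed by the identification $e^L = \sqrt{2\pi}$. The key computation is the one-step difference
\[
d_n - d_{n+1} = \Big(n + \tfrac12\Big)\ln\frac{n+1}{n} - 1,
\]
which, on substituting $t := \tfrac{1}{2n+1}$ (so that $\tfrac{n+1}{n} = \tfrac{1+t}{1-t}$ and $n + \tfrac12 = \tfrac{1}{2t}$), becomes $\tfrac{1}{2t}\ln\tfrac{1+t}{1-t} - 1 = \sum_{k\ge 1}\tfrac{t^{2k}}{2k+1}$, a positive series amenable to simple two-sided estimates.

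From this series I would extract three monotonicity statements. First, positivity of the series gives $d_n > d_{n+1}$, so $(d_n)$ is strictly decreasing; being bounded below (by the next item) it converges to some $L$. Second, bounding the tail by a geometric series, $d_n - d_{n+1} < \tfrac{t^2}{3(1-t^2)} = \tfrac{1}{12}\big(\tfrac1n - \tfrac1{n+1}\big)$, shows $d_n - \tfrac{1}{12n}$ is strictly increasing; since it tends to $L$, we get $d_n - \tfrac{1}{12n} < L$, i.e.\ $d_n < L + \tfrac{1}{12n}$. Third --- the only place where some care is needed --- keeping just the leading term $d_n - d_{n+1} > \tfrac{t^2}{3} = \tfrac{1}{3(2n+1)^2}$ and checking the elementary inequality $\tfrac{1}{3(2n+1)^2} > \tfrac{1}{12n+1} - \tfrac{1}{12n+13}$ (which clears to $24n > 23$, true for all $n \ge 1$) shows $d_n - \tfrac{1}{12n+1}$ is strictly decreasing; since it tends to $L$, we get $d_n - \tfrac{1}{12n+1} > L$, i.e.\ $d_n > L + \tfrac{1}{12n+1}$. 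Exponentiating the resulting chain $L + \tfrac{1}{12n+1} < d_n < L + \tfrac{1}{12n}$ yields the claimed bounds with $e^L$ in place of $\sqrt{2\pi}$.

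It remains to show $e^L = \sqrt{2\pi}$. For this I would use Wallis's product $\tfrac{\pi}{2} = \lim_{m\to\infty}\prod_{k=1}^m \tfrac{(2k)^2}{(2k-1)(2k+1)}$, rewrite the $m$-th partial product as $\tfrac{2^{4m}(m!)^4}{((2m)!)^2(2m+1)}$, and substitute the asymptotic $n! \sim e^L\sqrt{n}\,(n/e)^n$ already obtained from the monotonicity argument; a short computation collapses the partial product to $\tfrac{e^{2L}m}{2(2m+1)} \to \tfrac{e^{2L}}{4}$, so $e^{2L} = 2\pi$ and hence $e^L = \sqrt{2\pi}$. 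The main obstacle is the third estimate: obtaining precisely the lower error term $e^{1/(12n+1)}$, rather than only the crude $n! > \sqrt{2\pi n}\,(n/e)^n$, requires the slightly sharper comparison of $d_n - d_{n+1}$ with $\tfrac{1}{12n+1} - \tfrac{1}{12(n+1)+1}$; everything else is routine, with the evaluation of $e^L$ via Wallis being a standard but logically separate input.
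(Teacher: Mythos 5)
Your proof is correct. The paper does not prove this proposition at all --- it simply cites Robbins \cite{stirlingappr} --- and your argument is precisely Robbins' original one: the telescoping analysis of $d_n=\ln(n!)-(n+\tfrac12)\ln n+n$ via the series $\sum_{k\ge1}t^{2k}/(2k+1)$ with $t=1/(2n+1)$, the two monotone sequences $d_n-\tfrac{1}{12n}$ and $d_n-\tfrac{1}{12n+1}$, and Wallis's product to identify $e^L=\sqrt{2\pi}$; all the computations (including the clearing to $24n>23$) check out.
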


\begin{lemma}\label{lemma:no-dominating-class}
    For any $f(n) = o(\sqrt{n})$, if the counting depth is $d(n) \geq n/t - f(n)$, then $\equiv_{d(n)}$ has no dominating class as $n \to \infty$.
\end{lemma}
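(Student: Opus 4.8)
The plan is to show that, under the hypothesis $d(n) \geq n/t - f(n)$ with $f(n) = o(\sqrt{n})$, the probability $p_{\equiv_{d(n)}}(M_{\overline{n}})$ of \emph{every} equivalence class of $\equiv_{d(n)}$ stays bounded away from $1$ for all large $n$; this immediately rules out a class sequence with $p_n \to 1$.

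The first step is the structural observation that essentially only one class is a candidate for domination, namely the class $M_{\overline{n}}$ with $\overline{n} = (d(n),\dots,d(n))$. Indeed, if $\overline{n}$ is any $(n,d(n))$-admissible tuple with some coordinate $n_i < d(n)$, then membership in $M_{\overline{n}}$ forces the type $\pi_i$ to be realized \emph{exactly} $n_i$ times, so $p_{\equiv_{d(n)}}(M_{\overline{n}}) \leq \Pr[X_{\pi_i} = n_i] \leq \max_{0 \leq k \leq n} \binom{n}{k}t^{-k}(1-t^{-1})^{n-k}$, where $X_{\pi_i}$ is as in the proof of Lemma \ref{lemma:dominating-class}. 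Since $t^{-1}$ is a constant strictly between $0$ and $1$, Stirling's approximation gives $\max_k \binom{n}{k}t^{-k}(1-t^{-1})^{n-k} = O(1/\sqrt{n})$, so the probability of any such class tends to $0$. (If $td(n) > n$ this already finishes the proof, since then the all-$d$ tuple is not admissible and every class has a coordinate below $d(n)$.)

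It remains to bound $p_{\equiv_{d(n)}}(M_{(d,\dots,d)}) = \Pr[\,X_{\pi_i} \geq d(n)\text{ for all }i\,] \leq \Pr[X_{\pi_1} \geq d(n)]$, and for this I would show that $\Pr[X_{\pi_1} < d(n)]$ is bounded below by a positive constant. Since $\mathrm{E}(X_{\pi_1}) = n/t$ and $d(n) \geq n/t - f(n)$, the set of integers $k$ with $n/t - 2\sqrt{n} \leq k \leq n/t - f(n) - 1$ lies entirely below $d(n)$, and because $f(n) = o(\sqrt{n})$ it contains at least $\sqrt{n}$ integers once $n$ is large (here a little floor/ceiling care keeps the window strictly below $d(n)$). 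For each such $k$ we have $|k - n/t| = O(\sqrt{n})$, so a second application of Stirling --- comparing $\binom{n}{k}t^{-k}(1-t^{-1})^{n-k}$ with its maximal value, the ratio being $e^{-O(1)}$ throughout an $O(\sqrt{n})$-neighbourhood of the mean --- gives $\Pr[X_{\pi_1} = k] \geq c'/\sqrt{n}$ for an absolute constant $c' > 0$. Summing over the window yields $\Pr[X_{\pi_1} < d(n)] \geq c$ for some absolute constant $c > 0$ and all large $n$, hence $p_{\equiv_{d(n)}}(M_{(d,\dots,d)}) \leq 1 - c$.

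Combining the two cases, for all large $n$ every class of $\equiv_{d(n)}$ has probability at most $1 - c$, so no class sequence can satisfy $p_n \to 1$, i.e.\ $\equiv_{d(n)}$ has no dominating class. The only genuine work is in the two Stirling estimates --- the uniform upper bound $O(1/\sqrt{n})$ on the binomial point masses and the matching lower bound $\Omega(1/\sqrt{n})$ throughout an $O(\sqrt{n})$-neighbourhood of the mean --- and I expect the bookkeeping there, while elementary, to be the main obstacle; the observation that the all-$d$ class is the sole candidate for domination is what keeps the rest of the argument short.
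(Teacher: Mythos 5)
Your proof is correct, but it takes a genuinely different route from the paper's. The paper splits the admissible tuples differently: for any tuple with two distinct coordinates $n_i \neq n_j$ it invokes the symmetry $|M_{\overline{n}}| = |M_{\overline{n}'}|$ obtained by swapping those coordinates, which caps the probability at $1/2$ with no computation at all; the only remaining tuples are the constant ones ($(d,\dots,d)$, or $(n/t,\dots,n/t)$ when $d \geq n/t$), whose probability is shown to tend to $0$ by bounding each multinomial coefficient via Stirling and counting the summands via stars and bars --- the summand count being $O(f(n)^{t-1})$ is exactly where $f(n) = o(\sqrt{n})$ enters. Your decomposition is instead ``some coordinate $< d$'' versus ``all coordinates equal to $d$,'' handled respectively by a binomial point-mass bound (every such class has probability $O(1/\sqrt{n})$, since it pins some $X_{\pi_i}$ to an exact value) and a local-CLT anti-concentration estimate ($\Pr[X_{\pi_1} < d(n)] \geq c$ via a window of $\Theta(\sqrt{n})$ integers just below $d(n)$, each carrying mass $\Omega(1/\sqrt{n})$); here $f(n) = o(\sqrt{n})$ enters through the width of that window. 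Your argument yields a stronger, vanishing bound for every class whose tuple has a sub-$d$ coordinate (the paper only gets $1/2$ for non-constant tuples), but a weaker bound ($1-c$ rather than $o(1)$) for the all-$d$ class; the paper's sharper estimate on that class is reused later in the quantitative majority-class theorem, so the extra work there is not wasted. Both arguments are sound and both suffice for the lemma as stated.
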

\begin{proof} 
Let $M(n)$ be an arbitrary sequence of classes. We will show that there is some $n_0$ such that for every $n \geq n_0$ the probability of a model belonging to class $M(n)$ is at most half.

Let $M_{\overline{n}}$ be a class of the sequence $M(n)$ with $i, j \leq t$ such that $n_i \neq n_j$. Let $\overline{n}'$ be obtained from $\overline{n}$ by switching the numbers $n_i$ and $n_j$ in the tuple. We clearly have $|M_{\overline{n}'}| = |M_{\overline{n}}|$ by symmetry so the probability of a model belonging to $M_{\overline{n}}$ is at most half.

It remains to show that classes of the sequence $M(n)$ with tuples repeating only one number have probability at most half, when $n$ is large enough. Let $d \geq n/t - f(n)$, where $f(n) = o(\sqrt{n})$. Assume $d < n/t$ and let $M = M_{\overline{n}}$, where $\overline{n} = (d, \dots, d)$. Note that this is the only $(n, d)$-admissible tuple that repeats only one number. We show that this class $M$ has probability less than half if $n$ is large enough. In fact, we establish the stronger claim that the sequence of such classes with tuples $(d(n), \dots, d(n))$ has limit probability~$0$.

The size of the above class $M$ is given by the sum
\[
|M| = \sum\limits_{\substack{n_1+\cdots + n_t = n \\ n_i \geq d}} \binom{n}{n_1, \dots, n_t}.
\]

First we note that a multinomial coefficient is largest, when the numbers $n_1, \dots, n_t$ are equal. Using this and Stirling's approximation, we get

\begin{align*}
    \binom{n}{n_1, \dots, n_t} &\leq \binom{n}{\frac{n}{t}, \dots, \frac{n}{t}} \\ &\leq \frac{\sqrt{2\pi n} (\frac{n}{e})^n e^{\frac{1}{12n}}}{(\sqrt{2\pi \frac{n}{t}} (\frac{n}{et})^{\frac{n}{t}} e^{\frac{1}{12\frac{n}{t}+1}})^t} \\
    &= \frac{\sqrt{2\pi n} (\frac{n}{e})^n e^{\frac{1}{12n}}}{\sqrt{2\pi \frac{n}{t}}^t (\frac{n}{e})^{n} \frac{1}{t^n} e^{\frac{t}{12\frac{n}{t}+1}}} \\
    &= \sqrt{\frac{t^t}{(2\pi)^{t-1}}} \cdot \frac{1}{\sqrt{n^{t-1}}} \cdot e^{g(n)} \cdot t^n \\
    &\leq \sqrt{\frac{t^t}{(2\pi)^{t-1}}} \cdot \frac{1}{\sqrt{n^{t-1}}} \cdot 2^{n|\tau|}
\end{align*}

The exponent of $e$ above is 
\begin{align*}
g(n) &= \frac{1}{12n}-\frac{t}{12\frac{n}{t}+1} = \frac{1}{12n}-\frac{t^2}{12n+t} \\
&= \frac{12n+t -12t^2n}{12n(12n+t)} = \frac{(1-t^2)12n +t}{12n(12n+t)}.
\end{align*}
Clearly $g(n) < 0$ for all positive $n$ so $e^{g(n)} < 1$ and the above estimate holds.

By the stars and bars method, the original sum has 
\[
\binom{tf(n)+t-1}{t-1} \leq (2t)^{t-1} \cdot f(n)^{t-1}
\]
terms so the final estimate is
\[
|M| \leq \sqrt{\frac{t^t}{(2\pi)^{t-1}}} \cdot (2t)^{t-1} \cdot \frac{f(n)^{t-1}}{\sqrt{n^{t-1}}} \cdot 2^{n|\tau|}
\]

Recall that there are $2^{n|\tau|}$ models of size $n$ in total and $f(n) = o(\sqrt{n})$. We obtain the following limit:
\[
\lim_{n \to \infty} \frac{\sqrt{\frac{t^t}{(2\pi)^{t-1}}} \cdot (2t)^{t-1} \cdot \frac{f(n)^{t-1}}{\sqrt{n^{t-1}}} \cdot 2^{n|\tau|}}{2^{n|\tau|}} = 0.
\]
We see that the sequence of classes $M_{\overline{n}}$ with $\overline{n} = (d, \dots, d)$ has limit probability $0$. Thus for a single such class, the probability is certainly at most half if $n$ is large enough.

Now assume $d \geq n/t$, and consider the sequence of classes $M_{\overline{n}}$, where $\overline{n} = (n/t, \dots, n/t)$. This is again the only $(n, d)$-admissible tuple that repeats only one number. The size of these classes is given by $\binom{n}{\frac{n}{t}, \dots, \frac{n}{t}}$ and it is easy to see from the above that this sequence also has limit probability 0. Thus a single such class has probability less than half for large enough $n$.
\end{proof}

Intuitively, a class (sequence) is \emph{vanishing}, if a random model of size $n$ belongs to it with limit probability zero. To define this notion formally, fix a sequence $\equiv_{d(n)}$. We say that all classes in $\equiv_{d(n)}$ are \textbf{vanishing} as $n\rightarrow \infty$ if for all class sequences $M(n)$ for $\equiv_{d(n)}$, we
have $p_n\rightarrow 0$ as $n\rightarrow \infty$. 
We now show that if the counting depth is high enough, then all classes in the distribution sequence are vanishing. 

\begin{lemma}
    For any $f(n)$ such that $f(n) = \omega(\sqrt{n})$, if the counting depth is $d(n) \geq n/t + f(n)$, then all classes in $\equiv_{d(n)}$ are vanishing as $n \to \infty$.
\end{lemma}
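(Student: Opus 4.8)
The plan is to show that for a high counting depth, every $(n,d)$-admissible tuple corresponds to a class whose probability tends to $0$. The key structural observation, already used in the proof of Lemma \ref{lemma:no-dominating-class}, is that any tuple $\overline{n}$ in which two coordinates differ, say $n_i \neq n_j$, gives rise by the transposition symmetry to a distinct tuple $\overline{n}'$ with $|M_{\overline{n}'}| = |M_{\overline{n}}|$; hence the probability of such a class is at most $1/2$. Since we are now aiming for limit probability $0$ rather than just ``at most half'', I would strengthen this counting argument: a tuple with all coordinates equal is unique, but a tuple that is \emph{not} constant has at least two distinct values, and so lies in an orbit of size at least $2$ under coordinate permutations; more is true, but even this already shows the non-constant classes are dominated in probability by a suitable multiple of smaller quantities. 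The real content is therefore to handle the single constant tuple and to turn ``$\leq 1/2$'' into ``$\to 0$''.

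First I would reduce to the constant tuple. For $d(n) \geq n/t + f(n)$ with $f(n) = \omega(\sqrt n)$, the only $(n,d)$-admissible constant tuple is $\overline{n} = (n/t,\dots,n/t)$ (when $t \mid n$; otherwise there is no constant admissible tuple at all, which only helps), since every coordinate must be at most $d$ and the coordinates must sum to $n$, forcing each to equal $n/t$, and admissibility then requires $n/t = d$ — which is impossible once $d > n/t$, or forces $\sum n_i = n$ which is automatic. So in fact for $d(n) > n/t$ strictly there is no constant admissible tuple, and the orbit argument already kills every class. The remaining borderline is $d(n)$ close to $n/t$ from above; here I would invoke exactly the estimate established at the end of the proof of Lemma \ref{lemma:no-dominating-class}: the class $M_{\overline{n}}$ with $\overline{n} = (n/t,\dots,n/t)$ has size bounded by $\sqrt{t^t/(2\pi)^{t-1}} \cdot n^{-(t-1)/2} \cdot 2^{n|\tau|}$ via Stirling's approximation, so its probability is $O(n^{-(t-1)/2}) \to 0$ since $t = 2^{|\tau|} \geq 2$.

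Putting these together: given an arbitrary class sequence $M(n) = M_{\overline{n}^{(n)}}$, if infinitely many of the tuples $\overline{n}^{(n)}$ are non-constant, then along that subsequence each orbit under the symmetric group $S_t$ acting on coordinates has size $\geq 2$ and consists of classes of equal size, so $p_n \leq 1/2$; but we need $p_n \to 0$, so I would sharpen the orbit bound — a non-constant admissible tuple either has at least three coordinates taking a common value different from a fourth, or more simply one can iterate: if exactly two values appear, the orbit size is $\binom{t}{m}$ where $m$ is the multiplicity of one of them, which is $\geq t \geq 2$, and combined with the multinomial-coefficient spread this already forces $p_n$ below any fixed constant; then for the truly worst non-constant tuples I would again fall back on the Stirling-type multinomial bound from Lemma \ref{lemma:no-dominating-class}, which shows \emph{any} individual multinomial term, hence any class with $O(1)$ summands, has probability $O(n^{-(t-1)/2})$. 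The main obstacle I anticipate is precisely this last point: making the bound uniform over \emph{all} admissible tuples simultaneously (not just the constant one and not just one transposition) so that the conclusion holds for an arbitrary class sequence $M(n)$. I expect the cleanest route is to observe that every admissible tuple with $d(n) \geq n/t + f(n)$ has at most $O(f(n)^{t-1})$-to-$O(n^{t-1})$ summands in the size formula while each summand is at most $\sqrt{t^t/(2\pi)^{t-1}}\, n^{-(t-1)/2} 2^{n|\tau|}$, but since the interesting regime has $f(n)$ possibly large, one must instead bound $|M_{\overline{n}}|$ by the dominating class estimate and use the symmetry/orbit argument to gain the extra decay — I would verify that the product of the orbit-size lower bound and the per-class probability upper bound still tends to $0$, which is where the careful bookkeeping lies.
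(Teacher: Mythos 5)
There is a genuine gap here, and you partly flag it yourself: your argument never produces a bound that tends to $0$ for the classes whose tuple has a coordinate equal to $d$. The orbit/transposition symmetry only shows that a non-constant tuple shares its class size with at least one and at most $t!-1$ other tuples, so it yields $p_n \leq 1/2$, or at best $p_n \leq 1/t!$ --- a constant, not a vanishing quantity. Sharpening the orbit size cannot help, because the orbit of any tuple under the coordinate permutations has size at most $t!$, a constant independent of $n$. Your fallback, the Stirling bound on a single multinomial coefficient, does correctly handle the classes that are isomorphism classes (all $n_i < d$ and $\sum_i n_i = n$), giving probability $O(n^{-(t-1)/2})$; but a class with some $n_i = d$ is in general a \emph{union} of many isomorphism classes (for instance when two coordinates equal $d$, its size is a sum of polynomially many multinomial coefficients, as in Lemma \ref{lemma:counting-models}), and you give no mechanism to control that sum. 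Your side claim that there is no constant admissible tuple when $d > n/t$ is also off: $(n/t,\dots,n/t)$ remains admissible because its coordinates sum to $n$, though you do end up bounding that class correctly.

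The missing idea is the upper-tail Chernoff bound, which is exactly what the paper uses for the problematic case. Any class with some $n_i = d(n) \geq n/t + f(n)$ is contained in the event that some type is realized at least $n/t + f(n)$ times; since the number of realizations of a fixed type concentrates around its mean $n/t$ with fluctuations of order $\sqrt{n}$ and $f(n) = \omega(\sqrt{n})$, this event has probability tending to $0$ (upper-tail estimate with $\delta(n)$ of order $f(n)/n$, followed by a union bound over the $t$ types). Combining that with your multinomial-coefficient bound for the purely exact tuples covers every admissible tuple and hence every class sequence, which is what the statement requires. As written, your proposal establishes the conclusion only for isomorphism classes and leaves a non-vanishing constant bound for everything else.
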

\begin{proof}
    Fix some function $f(n)$ such that $f(n) = \omega(\sqrt{n})$. We first show that with limit probability zero a random model realizes each type less than $n/t + f(n)$ times. Without loss of generality we can assume that $f(n) \leq n - n/t$. Given a type $\pi$, we let $X_\pi$ denote the same random variable as in the proof of Lemma \ref{lemma:dominating-class}. Chernoff bound give us again that for every $n$ and for every $0 < \delta \leq 1$ we have 
    \[\Pr\bigg[X_\pi \geq (1 + \delta)\frac{n}{t}\bigg] \leq e^{-\delta^2 \frac{n}{t(2 + \delta)}} \leq e^{-\delta^2 \frac{n}{3t}}\]
    Note that $\delta$ can depend on $n$. Setting $\delta(n) := \sqrt{3tg(n)/n}$, for any $g(n)$ such that $g(n) = \omega(1)$ and $g(n) \leq n/(3t)$ (to guarantee that $\delta(n) \leq 1$), we get that
    \[e^{-\delta^2 \frac{n}{3t}} = e^{-g(n)} \to 0.\]
    Furthermore 
    \[\delta(n) \frac{n}{t} = \underbrace{\sqrt{3t} \cdot \frac{1}{t}}_{=: C} \cdot \sqrt{g(n)} \cdot \sqrt{n} = C \sqrt{ng(n)}.\]
    Hence for any function $g(n)$ such that $g(n) = \omega(1)$ and $g(n) \leq n/(3t)$ we have that with high probability the type $\pi$ is realized less than $\frac{n}{t} - C \sqrt{n g(n)}$ many times. Since $\pi$ was arbitrary, it follows from the union bound that with high probability every type $\pi$ is realized less than $\frac{n}{t} - C \sqrt{n g(n)}$ many times. By setting $g(n) = (f(n)/C \sqrt{n})^2$ we obtain that every type is realized less than $n/t - f(n)$ times.

    Now consider an arbitrary class sequence $M(n)$. We want to show that for every $\varepsilon > 0$ we have that $p_n < \varepsilon$, provided that $n$ is sufficiently large. Consider a class $M(n)$ and let $\overline{n}$ be the corresponding $(n,d(n))$-admissible tuple, i.e., $M(n)$ is the class $M_{\overline{n}}$. If there is $i \in I$ such that $n_i = d(n) = n/t + f(n)$, then it follows from the previous result that $p_{\equiv_{d(n)}}(M_{\overline{n}}) < \varepsilon$, as long as $n$ is sufficiently large. Suppose then that $n_i < n/t + f(n)$, for every $i \in I$. In this case the class $M_{\overline{n}}$ is an isomorphism class, which means that the probability that a random model belongs to $M_{\overline{n}}$ is simply
    \[\binom{n}{n_1,\dots,n_t}/2^{n|\tau|},\]
    which, as calculated in the proof of Lemma \ref{lemma:no-dominating-class}, is at most a constant times $1/\sqrt{n^{t-1}}$. This latter quantity is certainly less than $\varepsilon$, provided that $n$ is sufficiently large. Hence $p_{\equiv_{d(n)}}(M_{\overline{n}}) < \varepsilon$, provided that $n$ is sufficiently large.
\end{proof}

We gather the above results in the following theorem:
\begin{theorem}\label{thm:qualitative-theorem}
The following statements hold
for counting depth $d(n)$ as $n \to \infty$.
\begin{itemize}
\item
If $d(n) \leq n/t - f(n)$
where $f(n) = \omega(\sqrt{n})$,
then $\equiv_{{d(n)}}$ has a dominating class. 
\item
If $d(n) \geq n/t - f(n)$ where $f(n) = o(\sqrt{n})$, then $\equiv_{{d(n)}}$ has no dominating class.
\item
If $d(n) \geq n/t + f(n)$ where $f(n) = \omega(\sqrt{n})$, then every class in $\equiv_{d(n)}$ is vanishing.
\end{itemize}
\end{theorem}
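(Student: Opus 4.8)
The plan is to establish the three items as separate lemmas and then merely collect them; the theorem is nothing more than their conjunction. All three hinge on the behaviour of one random object: for a uniformly chosen $\tau$-model of size $n$, the vector $(X_{\pi_1},\dots,X_{\pi_t})$ counting how many of the $n$ points realize each type. Each $X_{\pi_i}$ is a sum of $n$ independent indicators of success probability $1/t$, so it has mean $n/t$, and the joint law is multinomial. The whole argument concerns how far this vector can stray from the balanced point $(n/t,\dots,n/t)$.

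For the first item I would apply the lower-tail Chernoff bound to each $X_{\pi_i}$ with a deviation $\delta(n)$ tuned so that the failure probability $e^{-\delta^2 n/(2t)}$ vanishes while the slack $\delta(n)\cdot n/t$ equals the given $f(n)$; writing $f(n) = C\sqrt{n\,g(n)}$ with $g(n)=\omega(1)$ achieves both at once. A union bound over the $t$ types then gives that with limit probability one every type is realized more than $n/t - f(n) \geq d(n)$ times, i.e.\ the model lies in the flat class $M_{\overline n}$ with $\overline n = (d(n),\dots,d(n))$, so that class sequence dominates. The third item is the mirror image: the upper-tail Chernoff bound (subject to the harmless constraint $\delta\leq 1$) shows that with limit probability one every type is realized fewer than $n/t + f(n)$ times. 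For an arbitrary class sequence $M_{\overline n}$, if some coordinate equals $d(n) = n/t + f(n)$ this event is already negligible; otherwise $M_{\overline n}$ is an isomorphism class and its probability is one multinomial coefficient divided by $2^{n|\tau|}$, which the Stirling estimate below bounds by $O(n^{-(t-1)/2})$. Either way $p_n\to 0$.

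The heart of the matter, and the step I expect to be hardest, is the second item, because there $n/t - f(n)$ lies only $o(\sqrt n)$ below the mean, a range in which Chernoff says nothing; one must estimate the relevant class sizes by hand. I would first observe that any admissible tuple with two distinct coordinates $n_i \neq n_j$ has, by swapping those coordinates, an equally large sibling class, so such a class already carries probability at most $1/2$. What remains is the unique flat tuple --- $(d,\dots,d)$ if $d < n/t$, and $(n/t,\dots,n/t)$ if $d \geq n/t$. For this class I would write $|M|$ as a sum of multinomials $\binom{n}{n_1,\dots,n_t}$ over compositions with every $n_i \geq d$, bound each summand by the balanced multinomial, invoke Robbins' inequalities for $n!$ to get $\binom{n}{n/t,\dots,n/t} \leq O(n^{-(t-1)/2})\cdot 2^{n|\tau|}$ (checking that the leftover exponential factor $e^{g(n)}$ is $<1$), and count the summands by stars and bars as $\binom{tf(n)+t-1}{t-1} = O(f(n)^{t-1})$. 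Since $f(n)=o(\sqrt n)$, the product $O\!\big(f(n)^{t-1}\,n^{-(t-1)/2}\big)\to 0$, so even the largest flat class vanishes, in particular drops below $1/2$ eventually, and no dominating class can exist. The delicate bookkeeping is controlling the Stirling error term and keeping the $d<n/t$ and $d\geq n/t$ regimes cleanly separated.
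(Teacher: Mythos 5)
Your proposal is correct and follows essentially the same route as the paper: the theorem is assembled from three lemmas proved exactly as you describe --- lower- and upper-tail Chernoff bounds with the deviation $\delta(n)$ tuned via $f(n)=C\sqrt{n\,g(n)}$ plus a union bound over the $t$ types for the first and third items, and for the second item the coordinate-swapping sibling argument for non-flat tuples combined with the Robbins--Stirling bound on the balanced multinomial and the stars-and-bars count of $O(f(n)^{t-1})$ summands for the flat class. No gaps; even the handling of the $d<n/t$ versus $d\geq n/t$ regimes and the isomorphism-class case in the vanishing lemma match the paper's argument.
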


We point out a corollary of the above result. When the counting depth is too low, almost all models are in the same dominating class in terms of $\gmlu_d$ definability. Conversely, if the counting depth is high enough, $\gmlu_d$ can separate the models into classes that vanish. These observations directly give us the following result:
\begin{corollary}\label{cor:hassu-korollaari}
    Let $f(n) = \omega(\sqrt{n})$. If $d(n) \leq n/t - f(n)$, then with limit probability one, two random models of size $n$ cannot be separated in $\gmlu_{d(n)}$. If $d(n) \geq n/t + f(n)$, then with limit probability one, two random models of size $n$ can be separated in $\gmlu_{d(n)}$.
\end{corollary}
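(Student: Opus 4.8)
The plan is to reduce both statements to Theorem \ref{thm:qualitative-theorem} via the observation that two models $\MM, \MM' \in \mathcal{M}$ can be separated by some formula of $\gmlu_{d(n)}$ if and only if $\MM \not\equiv_{d(n)} \MM'$, i.e.\ they lie in distinct equivalence classes. One direction is immediate from the definition of $\equiv_{d(n)}$ (models in the same class satisfy exactly the same $\gmlu_{d(n)}$-formulas). For the other direction, recall that every class $M_{\overline{n}}$ is $\gmlu_{d(n)}$-definable by the formula $\varphi(\overline{n})$ from Section \ref{descriptioncomplexitysection}, which therefore separates any member of $M_{\overline{n}}$ from any non-member. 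Hence, if the two random models of size $n$ are chosen independently and uniformly, the event ``they cannot be separated in $\gmlu_{d(n)}$'' is exactly the event that they fall into the same class, whose probability is the collision probability $q_n := \sum_{M \in \mathcal{M}/\equiv_{d(n)}} p_{\equiv_{d(n)}}(M)^2$.

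For the first claim, assume $d(n) \leq n/t - f(n)$ with $f(n) = \omega(\sqrt{n})$. By the first bullet of Theorem \ref{thm:qualitative-theorem} there is a dominating class sequence $M(n)$, so $p_n := p_{\equiv_{d(n)}}(M(n)) \to 1$. Both random models land in $M(n)$ with probability $p_n^2$, so $q_n \geq p_n^2 \to 1$; since also $q_n \leq 1$, we get $q_n \to 1$. Thus with limit probability one the two models lie in the same class and cannot be separated.

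For the second claim, assume $d(n) \geq n/t + f(n)$ with $f(n) = \omega(\sqrt{n})$. By the third bullet of Theorem \ref{thm:qualitative-theorem}, every class in $\equiv_{d(n)}$ is vanishing. I would first upgrade this to the uniform statement $M^\ast_n := \max_{M} p_{\equiv_{d(n)}}(M) \to 0$: otherwise there would be some $\varepsilon > 0$ and a subsequence along which a class of probability at least $\varepsilon$ exists, and padding the chosen classes to a full class sequence (arbitrarily outside the subsequence) would contradict the definition of ``vanishing''. Then bound the collision probability by $q_n = \sum_M p_{\equiv_{d(n)}}(M)^2 \leq M^\ast_n \sum_M p_{\equiv_{d(n)}}(M) = M^\ast_n \to 0$. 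Hence with limit probability one the two models lie in distinct classes, and the defining formula $\varphi(\overline{n})$ of the class of one of them separates them.

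The probability bookkeeping is routine; the only genuinely non-trivial point is the passage from the per-sequence notion ``every class sequence is vanishing'' to the uniform bound $M^\ast_n \to 0$, and this is handled by the standard subsequence argument sketched above.
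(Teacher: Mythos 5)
Your proof is correct and follows the route the paper intends: the paper offers no explicit proof, asserting that Theorem \ref{thm:qualitative-theorem} "directly gives" the corollary, and your collision-probability argument (together with the observation that separation is equivalent to lying in distinct $\equiv_{d(n)}$-classes, each of which is $\gmlu_{d(n)}$-definable) is the natural way to make that precise. The one step the paper glosses over entirely --- upgrading "every class sequence is vanishing" to the uniform bound $\max_M p_{\equiv_{d(n)}}(M) \to 0$ needed for $\sum_M p_{\equiv_{d(n)}}(M)^2 \to 0$ --- is exactly the point you identify, and your subsequence argument handles it correctly.
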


We say that a class is a \textbf{majority class}, if it contains more than half of all the models. The following theorem is a quantitative version of Theorem \ref{thm:qualitative-theorem}.


\begin{theorem}
    Let $n \in \mathbb{Z}_+$.
    \begin{itemize}
        \item If $d \leq n/t - c_1\sqrt{n}$, where
        \[c_1 := \frac{1}{t}\sqrt{2t\ln(2t)},\]
        then the distribution $\equiv_d$ for models of size $n$ has a majority class.
        \item  If $d \geq n/t- c_2 \sqrt{n}$, where
        \[
        c_2 := \sqrt{\frac{\pi}{2t^3(4t)^{1/(t-1)}}} < c_1
        \]
        then the distribution $\equiv_d$ for models of size $n$ does not have a majority class.
    \end{itemize}
\end{theorem}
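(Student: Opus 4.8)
The plan is to follow the strategy of the asymptotic results---Lemma~\ref{lemma:dominating-class} for the first bullet and Lemma~\ref{lemma:no-dominating-class} for the second---while keeping explicit control of all constants, since the statement must hold for every $n \in \mathbb{Z}_+$ rather than only for $n$ large. For the first bullet I would show that $M_{\overline{n}}$ with $\overline{n} = (d,\dots,d)$ is the majority class; this tuple is $(n,d)$-admissible because $td \le t(n/t - c_1\sqrt{n}) < n$. For the second bullet I would show that \emph{every} admissible tuple $\overline{n}$ satisfies $|M_{\overline{n}}| \le \tfrac12 |\mathcal{M}| = \tfrac12 \cdot 2^{n|\tau|}$, which rules out a majority class, splitting on whether $\overline{n}$ has two distinct coordinates or is constant.

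For the first bullet, fix a type $\pi$ and let $X_\pi$ count the points realizing $\pi$ in a uniformly random model of size $n$, so $X_\pi$ is a sum of $n$ independent $\tfrac1t$-Bernoulli variables with $\mathrm{E}(X_\pi) = n/t$; a model belongs to $M_{\overline{n}}$ iff $X_\pi \ge d$ for all $t$ types. Setting $\delta := c_1 t / \sqrt{n}$ we have $(1-\delta)\tfrac{n}{t} = \tfrac{n}{t} - c_1\sqrt{n} \ge d$, so the lower-tail Chernoff bound gives $\Pr[X_\pi \le d-1] \le \Pr[X_\pi \le (1-\delta)\tfrac{n}{t}] \le e^{-\delta^2 n/(2t)}$, and $c_1$ is chosen precisely so that $\delta^2 n/(2t) = c_1^2 t / 2 = \ln(2t)$, i.e.\ the bound equals $1/(2t)$---in fact strictly smaller, as the omitted event $\{X_\pi = d\}$ has positive probability. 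A union bound over the $t$ types gives $\Pr[\text{model} \notin M_{\overline{n}}] < t \cdot \tfrac1{2t} = \tfrac12$, so $M_{\overline{n}}$ is a majority class.

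For the second bullet, if $\overline{n}$ has coordinates $n_i \neq n_j$, then (as in Lemma~\ref{lemma:no-dominating-class}) swapping them yields a distinct admissible tuple whose class has the same size, so $M_{\overline{n}}$ contains at most half of $\mathcal{M}$. If $\overline{n}$ is constant it equals $(d,\dots,d)$ (when $td \le n$) or $(n/t,\dots,n/t)$ (when $t \mid n$ and $n/t \le d$), and $|M_{\overline{n}}| = \sum \binom{n}{n_1,\dots,n_t}$ over compositions of $n$ into $t$ parts each at least $d$. When this sum has more than one term, the hypothesis $d \ge n/t - c_2\sqrt{n}$ bounds the number of terms by $\binom{n-td+t-1}{t-1} \le (n-td+1)^{t-1} \le (tc_2\sqrt{n}+1)^{t-1}$, and having more than one term forces $n-td \ge 1$, hence $tc_2\sqrt{n} \ge 1$ and $(tc_2\sqrt{n}+1)^{t-1} \le (2tc_2\sqrt{n})^{t-1}$; multiplying by the Stirling estimate $\binom{n}{n_1,\dots,n_t} \le \sqrt{t^t/((2\pi)^{t-1}n^{t-1})}\cdot 2^{n|\tau|}$ from the proof of Lemma~\ref{lemma:no-dominating-class}, the value of $c_2$ is exactly the one making the $n$-dependence cancel and the product collapse to $\le \tfrac12 \cdot 2^{n|\tau|}$. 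When the sum degenerates to the single term $\binom{n}{n/t,\dots,n/t}$, I would instead use the self-contained inequality $\binom{n}{n/t,\dots,n/t}/t^n \le t!/t^t \le \tfrac12$ (valid for $t \ge 2$), which follows since $\binom{n}{n/t,\dots,n/t}/t^n$ is non-increasing along multiples of $t$: the ratio of consecutive values is $\prod_{j=1}^{t}(n+j)/(n+t) \le 1$.

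The main obstacle is the constant bookkeeping in the non-degenerate case of the second bullet. In the asymptotic proof any fixed multiplicative constant could be absorbed into an ``$n$ large enough'', but here one must check that $c_2$ as defined is exactly the threshold that survives the crude step $n-td+1 \le 2(n-td) \le 2tc_2\sqrt{n}$---and, relatedly, that this step is legitimate only when the sum is non-degenerate, so the degenerate case $d = n/t$ (which the hypothesis does not constrain in $n$) genuinely needs its own separate estimate. A minor additional point is keeping strict versus weak inequalities straight, since a majority class must contain \emph{strictly} more than half of the models.
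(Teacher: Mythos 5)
Your proposal is correct and follows essentially the same route as the paper: the Chernoff-plus-union-bound computation with $\delta = tc_1/\sqrt{n}$ for the first bullet, and the coordinate-swap argument together with the explicit multinomial/Stirling estimate for the constant tuple (with $c_2$ chosen to make the product collapse to $1/2$) for the second. Your separate treatment of the degenerate single-term case ($td=n$, or the tuple $(n/t,\dots,n/t)$ when $td>n$) is in fact slightly more careful than the paper's terse proof, which applies the bound on the number of terms without remarking that it needs $tc_2\sqrt{n}\geq 1$.
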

\begin{proof}
    Suppose first that $d \leq n/t - c_1 \sqrt{n}$. Set $\delta = (tc_1)/\sqrt{n}$, in which case $\delta \cdot \frac{n}{t} = c_1\sqrt{n}$ and $\delta^2 \cdot n/(2t) = (tc_1)^2/(2t)$. Applying Chernoff bound and the union bound we obtain, in a similar manner as in the proof of Lemma \ref{lemma:dominating-class}, that with probability strictly greater than $(1 - te^{-(tc_1)^2/(2t)})$ every type is realized at least $n/2 - c_1 \sqrt{n}$-times. A quick calculation shows that this latter probability is equal to $1/2$ (hence the choice of $c_1$).

    Consider then the case $d \geq n/t-c_2\sqrt{n}$. Let $M = M_{\overline{n}}$, where $\overline{n} = (d, \dots, d)$. Using the estimate from the proof of Lemma \ref{lemma:no-dominating-class} with $f(n) = c_2\sqrt{n}$, we obtain
    \begin{align*}
    \frac{|M|}{2^{n|\tau|}} &\leq \sqrt{\frac{t^t}{(2\pi)^{t-1}}} \cdot (2t)^{t-1} \cdot \frac{(c_2\sqrt{n})^{t-1}}{\sqrt{n^{t-1}}} = 1/2
    \end{align*}
    Thus $M$ is not a majority class. The same reasoning as in the proof of Lemma \ref{lemma:no-dominating-class} shows there is no other majority class.
\end{proof}

We of course obtain a corresponding quantitative version of Corollary \ref{cor:hassu-korollaari}.

\begin{corollary}
    Let $n \in \mathbb{Z}_+$. If $d \leq n/t - c_1 \sqrt{n}$, then the probability that two random models of size $n$ can be separated in $\gmlu_d$ is less than $1/4$. If $d \geq n/t - c_2\sqrt{n}$, then the probability that two random models of size $n$ can be separated in $\gmlu_d$ is at least $1/2$.
\end{corollary}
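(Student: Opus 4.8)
The plan is to convert the event that two independent, uniformly random size-$n$ models $\MM$ and $\MM'$ can be separated in $\gmlu_d$ into a collision probability and then read off both bounds from the previous theorem. First note that $\MM$ and $\MM'$ can be separated in $\gmlu_d$ exactly when $\MM\not\equiv_d\MM'$, that is, when they lie in different classes of the partition $\equiv_d$; this is immediate from the definition of $\equiv_d$, since $\MM\equiv_d\MM'$ means they satisfy the same $\gmlu_d$-formulas. Since the two draws are independent and each class $M$ is hit with probability $p_{\equiv_d}(M)=|M|/2^{n|\tau|}$, the probability that they cannot be separated is the collision probability $\sum_{M}p_{\equiv_d}(M)^2$, so that
\[
\Pr[\,\MM,\MM'\text{ separable in }\gmlu_d\,]\;=\;1-\sum_{M}p_{\equiv_d}(M)^2,
\]
the sum ranging over the classes of $\equiv_d$. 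Hence everything reduces to bounding this collision sum from below (for the first bullet) and from above (for the second).

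For the regime $d\geq n/t-c_2\sqrt n$: the preceding theorem tells us $\equiv_d$ has no majority class, so $p_{\equiv_d}(M)\leq 1/2$ for every $M$. Then $\sum_M p_{\equiv_d}(M)^2\leq\big(\max_M p_{\equiv_d}(M)\big)\cdot\sum_M p_{\equiv_d}(M)\leq\tfrac12$, and the separation probability is at least $1/2$. For the regime $d\leq n/t-c_1\sqrt n$: the preceding theorem produces a majority class $M^*$ — concretely $M_{\overline{n}}$ with $\overline{n}=(d,\dots,d)$ — and, tracing through the Chernoff and union bound step that produced it, also a quantitative lower bound on $p_{\equiv_d}(M^*)$. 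Feeding $\sum_M p_{\equiv_d}(M)^2\geq p_{\equiv_d}(M^*)^2$ together with the fact that the complementary mass $1-p_{\equiv_d}(M^*)$ contributes at most $\big(1-p_{\equiv_d}(M^*)\big)^2$ to the collision sum then yields the claimed upper bound on the separation probability.

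Since the analytically heavy part — the dichotomy ``majority class vs.\ no majority class'' with the explicit thresholds $c_1,c_2$ — is already established in the previous theorem, there is no real obstacle remaining. The one place needing some care is the first bullet: one must check that the concentration estimate behind $M^*$ is strong enough to push $1-p_{\equiv_d}(M^*)^2$ below the stated constant, and keep the strictness conventions consistent (a majority class contains strictly more than half the models) so that the constants line up with the inequalities exactly as written. The rest is bookkeeping on top of the reduction in the first paragraph.
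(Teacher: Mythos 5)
Your reduction of separability to lying in different $\equiv_d$-classes, and hence of non-separability to the collision probability $\sum_M p_{\equiv_d}(M)^2$, is the right starting point, and your argument for the second bullet is correct and complete: the absence of a majority class gives $p_{\equiv_d}(M)\le 1/2$ for every class $M$, hence $\sum_M p_{\equiv_d}(M)^2\le \tfrac12$ and the separation probability is at least $1/2$.

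The first bullet, however, is not closed, and the step you defer is exactly the one that fails. All the preceding theorem delivers is a majority class $M^*$ with $p_{\equiv_d}(M^*)>1/2$; indeed $c_1$ is chosen precisely so that the Chernoff-plus-union-bound failure probability equals $1/2$, so ``tracing through'' that step yields nothing stronger. From $p_{\equiv_d}(M^*)>1/2$ the collision sum is bounded below by $p_{\equiv_d}(M^*)^2>1/4$ and by no more: the remaining mass $1-p_{\equiv_d}(M^*)$ can be spread over arbitrarily many classes and contribute arbitrarily little, so your remark that it ``contributes at most $(1-p_{\equiv_d}(M^*))^2$'' is an upper bound on that contribution and cannot feed a lower bound on the collision sum. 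What actually follows is that two random models \emph{cannot} be separated with probability greater than $1/4$, i.e.\ the separation probability is less than $3/4$, not less than $1/4$. To reach the printed constant one would need $p_{\equiv_d}(M^*)>\sqrt{3}/2$ (or, via the union bound over the two models, $1-p_{\equiv_d}(M^*)<1/8$), which the theorem with this choice of $c_1$ does not supply; this appears to be a defect of the corollary's statement rather than something a sharper reading of the concentration step can rescue, but in any case your proof as written does not establish the bound as stated and explicitly leaves unverified the one claim that would be needed.
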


\section{Conclusion}\label{conclusion}
We have established an interesting monotone connection between model class sizes and description complexities, also obtaining related results for entropy. Furthermore, we have characterized the phase transitions of model class size when the domain size $n$ and expressive power (in the form of counting depth $d$) is altered. These results elucidate the interplay of class size and formula length. While focusing on $\gmlu_d$, the results have been intended to give a general overview of related phenomena. Thereby, an obvious future direction involves investigating how these results lift into the framework of first-order logic. There, natural parametrizations---analogous to varying the counting depth $d$---can possibly be obtained by using quantifier depth and the number of variables.

Moving beyond first-order logic, it would be interesting to investigate the expressively Turing-complete logic $\mathrm{CL}$, or \emph{computation logic}, introduced in \cite{turingcomp}. Studying description complexities within that framework would lead to an even closer link to Kolmogorov complexity.

\medskip

\medskip

\medskip

\noindent
\textbf{Acknowledgments.}
Antti Kuusisto and Miikka Vilander were supported by the Academy of Finland project \emph{Explaining AI via Logic} (XAILOG), grant number 345612 (Kuusisto). Antti Kuusisto was also supported by the Academy of Finland project \emph{Theory of computational logics}, grant numbers 324435, 328987, 352419, 352420, 352419, 353027.

\bibliographystyle{plain}
\bibliography{lics}

\end{document}